\providecommand{\U}[1]{\protect \rule{.1in}{.1in}}
\providecommand{\U}[1]{\protect \rule{.1in}{.1in}}
\providecommand{\U}[1]{\protect \rule{.1in}{.1in}}
\newtheorem{theorem}{Theorem}[section]
\newtheorem{lemma}{Lemma}[section]
\newtheorem{proposition}{Proposition}[section]
\newtheorem{corollary}{Corollary}[section]
\newtheorem{definition}{Definition}[section]
\numberwithin{equation}{section}
\theoremstyle{remark}
\newtheorem{remark}{Remark}[section]
\numberwithin{equation}{section}
\begin{document}
\title[CR Poincar\'{e}-Lelong equation and Yamabe steady solitons]{On the CR Poincar\'{e}-Lelong equation, Yamabe steady solitons and structures
of complete noncompact Sasakian manifolds}
\author{$^{\star}$Der-Chen Chang}
\address{Department of Mathematics and Statistics, Georgetown University, Washington D.
C. 20057, USA\\
Department of Mathematics, Fu Jen Catholic University, Taipei 242, Taiwan, R.O.C.}
\email{chang@georgetown.edu}
\author{$^{\ast \ast}$Shu-Cheng Chang}
\address{Department of Mathematics and Taida Institute for Mathematical Sciences
(TIMS), National Taiwan University, Taipei 10617, Taiwan, R.O.C., Current address : Yau Mathematical Sciences
Center, Tsinghua University, Beijing, China
}
\email{scchang@math.ntu.edu.tw }
\author{$^{\dag}$Yingbo Han}
\address{{School of Mathematics and Statistics, Xinyang Normal University}\\
Xinyang,464000, Henan, P.R. China}
\email{{yingbohan@163.com}}
\author{Chien Lin}
\address{Department of Mathematics, National Tsing Hua University, Hsinchu 30013,
Taiwan, R.O.C}
\email{r97221009@ntu.edu.tw}
\thanks{$^{\star}$Der-Chen Chang is partially supported by an NSF grant DMS-1408839
and a McDevitt Endowment Fund at Georgetown University.}
\thanks{$^{\ast \ast}$Shu-Cheng Chang is partially supported in part by the MOST of Taiwan.}
\thanks{$^{\dag}$Yingbo Han is partially supported by an NSFC grant No. 11201400,
Nanhu Scholars Program for Young Scholars of {Xinyang Normal University} and
the Universities Young Teachers Program of Henan Province (2016GGJS-096),
China Scholarship Council(201508410400).}
\subjclass{Primary 32V05, 32V20; Secondary 53C56}
\keywords{CR Poisson equation, CR Poincar\'{e}-Lelong equation, CR Yamabe solitons}

\begin{abstract}
In this paper, we solve the so-called CR Poincar\'{e}-Lelong equation by solving the CR Poisson equation on
a complete noncompact CR $(2n+1)$-manifold
with nonegative pseudohermitian bisectional curvature tensors and vanishing
torsion which is an odd dimensional counterpart of K\"{a}hler geometry.
With applications of this solution plus the CR Liouvelle property, we study the structures of
complete noncompact Sasakian manifolds and CR Yamabe steady solitons.

\end{abstract}
\maketitle

\section{Introduction}

It is conjectured (\cite{gw1}, \cite{s} and \cite{y}) that a complete
noncompact Kahler manifold of complex dimension $m$ with positive holomorphic
bisectional curvature is biholomorphic to $\mathbf{C}^{m}$. \ It is the very
first result concerning such a conjecture was obtained by Mok-Siu-Yau
(\cite{msy}) and Mok (\cite{mok2}). More precisely, they proved that a
complete noncompact K\"{a}hler manifold of nonnegative holomorphic bisectional
curvature $M$ is isometrically biholomorphic to $\mathbf{C}^{m},$ $m\geq2$
under the assumptions of the maximum volume growth condition
\[
V_{o}\left(  r\right)  \geq \delta r^{2m}%
\]
for some point $o\in M,\  \delta>0,$ $r(x)=d(o,x)$ and the scalar curvature $R$
decays as
\[
R(x)\leq \frac{C}{1+r^{2+\varepsilon}},\  \  \ x\in M
\]
for $C>0$ and any arbitrarily small positive constant $\varepsilon$. A
Riemannian version of (\cite{msy}) was proved in \cite{gw2} shortly
afterwards. Since then there are several further works aiming to prove the
optimal result and reader is referred to \cite{mok1}, \cite{ctz}, \cite{cz},
\cite{n2}, \cite{nt1}, \cite{nt2} and \cite{nst}. A key common ingredient used
in the previous works such as \cite{msy}, \cite{n2} and \cite{nt2} is to solve
the so-called Poincar\'{e}-Lelong equation%
\[
i\partial \overline{\partial}u=\rho,
\]
by first solving the Poisson equation
\[
\Delta u=trace(\rho)
\]
for a given Ricci form $\rho$ of $M.$ They then applied the results to study
the analytic and geometric properties of $M$. \ Note that we refer to
\cite{n1} and \cite{cf} for adapting a different method which has also
succeeded in the recent resolution of the fundamental gap conjecture.

A CR $(2n+1)$-manifold of vanishing pseudohermitian torsion (i. e. Sasakian
manifold) is an odd dimensional counterpart of K\"{a}hler geometry, then it is
very natural to concerned with an CR analogue of Mok-Siu-Yau type theorem of
Poincare-Lelong equation in a complete noncompact strictly pseudoconvex CR
$(2n+1)$-manifold of vanishing pseudohermitian torsion with nonnegative
pseudohermitian bisectional curvature.

Let $(M,J,\theta)$ be a complete noncompact strictly pseudoconvex CR
$(2n+1)$-manifold (see in the appendix for basic notions). A piecewise smooth curve $\gamma
:[0,1]\rightarrow M$ is said to be horizontal if $\dot \gamma(t)\in \mathcal D$
whenever $\dot \gamma(t)$ exists. Here $\mathcal D$ is the linear span of the tangential vector fields in $\left \{Z_{\alpha},Z_{\bar{\alpha}}\right \}_{\alpha=1}^n $ which is a $2n$-dimensional vector space.
We denote $C_{p,q}$ to be the set of all
horizontal curves joining $p$,  $q$. Since $M$ is strictly pseudoconvex, the Levi form is positive definite. This implies that
${\mathcal D}\cup [\mathcal D,\mathcal D]=TM$, the tangent bundle of $M$. Therefore, $C_{p,q}\not=\emptyset$ by Chow's theorem (see {\it e.g.,} \cite {chow} and \cite {1}).  The Carnot-Carath\'{e}odory distance
between two points $p,\ q\in M$ is defined by
\[
d_{cc}(p,q)=\inf \left \{  l(\gamma):\  \gamma \in C_{p,q}\right \}  ,
\]
where the length of $\gamma$ is
\[
l(\gamma)=\int_{0}^{1}\left \langle \dot \gamma(t),\dot \gamma(t)\right \rangle
_{L_{\theta}}^{\frac{1}{2}}dt.
\] Now given any
nonnegative function $f$ on $M$, we define
\[
k_{f}(x,t)=\frac{1}{V_{x}(t)}\int_{B_{x}(t)}fd\mu.
\]
Here $V_{x}(r)$ is the volume of the geodesic ball $B_{x}(r)$ with respect to
the Carnot-Carath\'{e}odory distance $r(x,y)$ between $x$ and $y$, and
$r(x)=r(x,o)$ where $o\in M$ is a fixed point. Next we give the definition of the sublaplacian $\triangle_b$ (see the formula (\ref{7.1})).
\[
\begin{array}
[c]{c}
\Delta_{b}u=Tr\left(  (\nabla^{H})^{2}u\right)  =\sum_{\alpha}(u_{\alpha
\bar{\alpha}}+u_{\bar{\alpha}\alpha}).
\end{array}
\] We will follows the method of
solving Poisson equation as in \cite{nst}.

\begin{theorem}
\label{A1} Let $(M,J,\theta)$ be a complete noncompact strictly pseudoconvex
CR $(2n+1)$-manifold with nonnegative pseudohermitian Ricci curvature tensors
and vanishing torsion. Let $f$ be a nonnegative function and $k(t)=k_{f}%
(o,t)$, where $o\in M$ is a fixed point. Suppose that
\begin{equation}
\int_{0}^{\infty}k(t)dt<\infty \label{A1a}
\end{equation}
and suppose that there exist $1>\delta>0$, $h(t)\geq0$, $0\leq t\leq \infty$
with $h(t)=o(t)$ as $t\rightarrow \infty$ such that
\begin{equation}
\int_{0}^{t}sk(x,t)ds\leq h(t) \label{A1b}
\end{equation}
for all $x$ and for all $t\geq \delta r(x)$. Then the CR Poisson equation
\begin{equation}
\triangle_{b}u=f \label{A1c}
\end{equation}
\  \ has a solution such that for all $1>\varepsilon>0$
\begin{equation}
\begin{array}
[c]{ccl}
\alpha_{1}r\int_{2r}^{\infty}k(t)dt+\beta_{1}\int_{0}^{2r}tk(t)dt & \geq & u(x)\\
&  \geq & -\alpha_{2}r\int_{2r}^{\infty}k(t)dt-\beta_{2}\int_{0}^{\varepsilon
r}tk(x,t)dt+\beta_{3}\int_{0}^{2r}tk(t)dt
\end{array}
\end{equation}
for some positive constants $\alpha_{1}$, $\beta_{i}$, $1\leq i\leq3.$ which
depend on $n$ and $\alpha_{2}$ which depends on $n,\varepsilon$.
\end{theorem}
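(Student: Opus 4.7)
The plan is to adapt the Ni-Shi-Tam exhaustion argument from the K\"{a}hler setting to the CR setting. I would first solve a sequence of Dirichlet problems
\[
\triangle_{b}u_{R}=f \text{ on } B_{o}(R),\quad u_{R}\big|_{\partial B_{o}(R)}=0,
\]
whose existence and interior regularity follow from the hypoellipticity of $\triangle_{b}$ (the fields $\{Z_\alpha,Z_{\bar\alpha}\}$ satisfy H\"ormander's bracket condition by strict pseudoconvexity, as reflected in the very use of Chow's theorem in the setup). The goal is to bootstrap this to $R$-independent two-sided bounds of the stated form, after which a Harnack/diagonal extraction produces the desired global solution.

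By the maximum principle one may write $u_{R}(x)=-\int_{B_{o}(R)}G_{R}(x,y)f(y)\,d\mu(y)$ with $G_{R}$ the positive Dirichlet Green's function of $-\triangle_{b}$, and $G_{R}\nearrow G$ as $R\to\infty$ to the minimal positive Green's function provided $M$ is non-parabolic; hypothesis (\ref{A1a}) is precisely the non-parabolicity condition in this setting. The key analytic input is the subriemannian Li-Yau bound
\[
G(x,y)\asymp\int_{r(x,y)}^{\infty}\frac{t\,dt}{V_{x}(t)},
\]
valid in both directions under the hypothesis of nonnegative pseudohermitian Ricci and vanishing torsion, together with the CR Bishop-Gromov comparison that $t\mapsto V_{x}(t)/t^{2n+2}$ is non-increasing (here $2n+2$ is the Carnot-Carath\'{e}odory homogeneous dimension).

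Plugging this bound into the representation, I would split the $y$-integration into an inner region $B_{x}(2r)$ and an outer region $M\setminus B_{x}(2r)$. On the inner region, Fubini together with the coarea-type identity $\int_{B_{x}(s)}f\,d\mu=V_{x}(s)k_{f}(x,s)$ converts the integral into $\int_{0}^{2r}s\,k_{f}(x,s)\,ds$; hypothesis (\ref{A1b}) (valid for $s\geq\delta r(x)$) then compares $k_{f}(x,s)$ with $k(s)=k_{f}(o,s)$ on the annulus $\delta r(x)\leq s\leq 2r$ via volume doubling, while on the very-inner piece $s\leq\varepsilon r$ one is forced to retain $k_{f}(x,s)$, which produces the $-\beta_{2}\int_{0}^{\varepsilon r}t\,k(x,t)\,dt$ term in the lower bound. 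On the outer region, $G(x,y)\lesssim r$ uniformly for $r(x,y)\geq 2r$, and the inclusion $B_{x}(t)\subset B_{o}(t+r(x))\subset B_{o}(2t)$ translates outer integrals into the $\alpha_{i}\,r\int_{2r}^{\infty}k(t)\,dt$ contributions. Collecting both regions gives the announced two-sided bound.

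The main technical obstacle is establishing the subriemannian two-sided Green's function bound and the associated CR volume comparison in this generality: the Carnot-Carath\'{e}odory distance is only Lipschitz along horizontal directions, the sublaplacian comparison for $r(x,\cdot)$ carries \emph{a priori} singular terms away from the cut locus, and the CR Bochner formula for $\triangle_{b}$ contains pseudohermitian torsion correction terms that are controlled exactly by the vanishing-torsion hypothesis. Once these subriemannian analogues of the Riemannian volume and heat-kernel comparisons are in place, the bookkeeping for the two regions reproduces the Ni-Shi-Tam argument, and the uniform bounds pass to the limit $R\to\infty$ using (\ref{A1a}) together with a standard Harnack-type inequality for subelliptic harmonic functions.
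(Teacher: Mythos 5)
Your outline follows the Ni--Shi--Tam exhaustion scheme, which is indeed the right family of ideas, but it misses the one step that the paper's proof of Theorem \ref{A1} actually consists of, and it contains a false claim in its place. You assert that hypothesis (\ref{A1a}), $\int_{0}^{\infty}k(t)\,dt<\infty$, ``is precisely the non-parabolicity condition in this setting.'' It is not: it is a decay condition on the averages of $f$ and says nothing about the existence of a positive Green's function on $M$ (take $f\equiv 0$ on a parabolic $M$). Likewise, the two-sided comparison $G(x,y)\asymp r^{2}(x,y)/V_{x}(r(x,y))$, equivalently the convergence and two-sided bound for $\int_{r}^{\infty}t\,V_{x}(t)^{-1}dt$, can fail on $M$ itself when the volume growth is slow. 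The paper's Theorem \ref{91} handles exactly the case where non-parabolicity and the Green's function bound (\ref{10}) are \emph{assumed}; the entire content of the proof of Theorem \ref{A1} is the reduction to that case by passing to $\widetilde{M}=M^{2n+1}\times\mathbb{R}^{4}$, which is automatically non-parabolic and on which the extra four Euclidean dimensions force $\int_{r}^{\infty}t\,V(B_{x}(t))^{-1}dt\asymp r^{2}/V(B_{x}(r))$ via volume doubling; one then solves the Poisson equation on $\widetilde{M}$ for $f$ lifted trivially and uses Lemma \ref{98} to translate the averages $k_{f}$ on the product back to those on $M$. Without this (or some substitute argument establishing non-parabolicity of $M$ and the bound (\ref{10}) from the stated hypotheses, which is impossible in general), your argument does not get off the ground.

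A second, smaller but still genuine, defect: you represent the approximate solutions as $u_{R}(x)=-\int_{B_{o}(R)}G_{R}(x,y)f(y)\,d\mu(y)$. Under (\ref{A1a}) alone the limiting potential $\int_{M}G(x,y)f(y)\,dy$ need not converge, since by the Green's function estimate it is comparable to $\int_{0}^{\infty}tk(t)\,dt$, which is not assumed finite. The paper instead works with the normalized difference $u_{R}(x)=\int_{B_{o}(R)}\bigl(G_{R}(o,y)-G_{R}(x,y)\bigr)f(y)\,dy$, so that $u_{R}(o)=0$, and controls the far-field contribution by the subgradient estimate $|G_{R}(o,y)-G_{R}(x,y)|\lesssim (r/r(y))\,G(o,y)$; this cancellation is what produces the $r\int_{2r}^{\infty}k(t)\,dt$ terms and makes the family $\{u_{R}\}$ bounded on compact sets. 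Your splitting into $B_{x}(2r)$ and its complement, together with the claimed monotonicity of $V_{x}(t)/t^{2n+2}$ (for which only volume doubling, not full Bishop--Gromov monotonicity, is available from the cited CR comparison results), would need to be reworked around this normalization.
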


The main consequence is that if $f$ decays faster than $r^{-1}$ in the
following sense, then (\ref{A1c}) has a solution.

\begin{corollary}
\label{C12} Let $(M,J,\theta)$ be a complete noncompact strictly pseudoconvex
CR $(2n+1)$-manifold with nonnegative pseudohermitian Ricci curvature tensors
and vanishing torsion. Let $f$ be a nonnegative function and $k(t)=k_{f}
(o,t)$, where $o\in M$ is a fixed point. Suppose that
\[
\int_{0}^{\infty}k(t)dt<\infty
\]
and
\[
\sup_{\partial B_{o}(r)}f=o(r^{-1})
\]
as $r\rightarrow \infty.$ Then the CR Poisson equation
\[
\triangle_{b}u=f
\]
\  \ has a solution.
\end{corollary}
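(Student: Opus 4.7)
The plan is to verify hypothesis (\ref{A1b}) of Theorem \ref{A1} from the pointwise decay $\sup_{\partial B_{o}(r)}f=o(r^{-1})$ and then invoke that theorem directly. With $\delta=1/2$ fixed, the goal is to show that for each $\eta>0$ there is a $T=T(\eta)$ with $\int_{0}^{t}s\,k(x,s)\,ds\le \eta t$ whenever $t\ge T$ and $t\ge \delta r(x)$; this exhibits the bound $h(t):=\sup\{\int_{0}^{t}s\,k(x,s)\,ds : r(x)\le t/\delta\}=o(t)$ required in (\ref{A1b}). The underlying idea is a dichotomy in the scale $s$: when $s\le r(x)/2$, the ball $B_{x}(s)$ sits at distance at least $r(x)/2$ from $o$, so the pointwise decay of $f$ forces $k(x,s)$ to be small, while when $s\ge r(x)/2$, one has $B_{x}(s)\subseteq B_{o}(3s)$, and CR volume doubling makes $k(x,s)$ comparable to $k(3s)$, after which the integrability $\int k<\infty$ controls the $s$-integration.

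To implement this, fix $\epsilon>0$ (to be chosen at the end in terms of $\eta$) and let $R_{1}=R_{1}(\epsilon)$ satisfy $\sup_{\partial B_{o}(r)}f\le \epsilon/r$ for every $r\ge R_{1}$. Split the target integral at $s=r(x)/2$. For the piece $s\le r(x)/2$: if $r(x)\ge 2R_{1}$ then $r(y)\ge r(x)/2\ge R_{1}$ for every $y\in B_{x}(s)$, hence $f(y)\le 2\epsilon/r(x)$ and
\[
\int_{0}^{r(x)/2} s\,k(x,s)\,ds \le \frac{2\epsilon}{r(x)}\cdot\frac{r(x)^{2}}{8} = \frac{\epsilon\,r(x)}{4} \le \frac{\epsilon}{4\delta}\,t;
\]
when $r(x)<2R_{1}$ the same integral is dominated by the $t$-independent constant $\tfrac{1}{2}R_{1}^{2}\sup_{B_{o}(3R_{1})}f$. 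For the piece $s\ge r(x)/2$, the inclusions $B_{x}(s)\subseteq B_{o}(3s)$ and $B_{o}(s)\subseteq B_{x}(3s)$ combined with volume doubling yield $V_{o}(3s)\le C_{1}V_{x}(s)$, and hence $k(x,s)\le C_{1}\,k(3s)$, so that after substituting $u=3s$,
\[
\int_{r(x)/2}^{t} s\,k(x,s)\,ds \le \frac{C_{1}}{9}\int_{0}^{3t} u\,k(u)\,du.
\]

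To finish, pick $A=A(\eta)$ with $\int_{A}^{\infty}k(u)\,du\le \eta$, which is possible because $\int_{0}^{\infty}k<\infty$; then
\[
\int_{0}^{3t} u\,k(u)\,du \le A\int_{0}^{A}k(u)\,du + 3t\int_{A}^{\infty}k(u)\,du \le A\|k\|_{L^{1}} + 3\eta t,
\]
which is at most $4\eta t$ once $t\ge A\|k\|_{L^{1}}/\eta$. Setting $\epsilon=\eta\delta$ makes the small-scale contribution at most $\eta t/4$, so the two pieces combine to give $\int_{0}^{t}s\,k(x,s)\,ds \le C\eta t$ uniformly in $x$ with $r(x)\le t/\delta$, once $t$ is large enough. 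Since $\eta$ was arbitrary, this establishes (\ref{A1b}), and Theorem \ref{A1} then produces the desired solution of $\triangle_{b}u=f$.

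The main technical input is the volume doubling / Bishop--Gromov comparison $V_{o}(3s)\le C_{1}V_{x}(s)$ valid when $r(x)\le 2s$ for Carnot--Carath\'eodory balls on a complete Sasakian manifold with nonnegative pseudohermitian Ricci curvature. This is the only step that genuinely uses the curvature and torsion hypotheses, and it is the same ingredient that already underlies Theorem \ref{A1}; the remaining work is a bookkeeping of error terms and a coordinated choice of the two small parameters $\epsilon$ and $\eta$.
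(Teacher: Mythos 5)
Your argument is correct and is exactly the intended derivation: the paper states Corollary \ref{C12} as an immediate consequence of Theorem \ref{A1} without writing out the verification, and your splitting of $\int_0^t s\,k(x,s)\,ds$ at $s=r(x)/2$, using the pointwise decay near the sphere through $x$ and volume doubling plus $\int_0^\infty k<\infty$ at larger scales, is the standard way (as in \cite{nst}) to deduce hypothesis (\ref{A1b}) from the $o(r^{-1})$ decay. No gaps to report.
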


By deriving some basic results for solutions to the CR heat equation on
complete noncompact pseudohermitian $(2n+1)$-manifolds, we are able to solve
the CR Poincar\'{e}-Lelong equation with the help of Theorem \ref{A1}. \

The key is, following from Corollary \ref{c31} with $f=trace(\rho)$ and
$f_{0}(x)=0$ ,  that $u_{0}(x)=0$ and then (\ref{pl53}) ( in section $4$)
reduces to the CR Poisson equation (\ref{A1c}).

\begin{theorem}
\label{A2} Let $(M,J,\theta)$ be a complete CR $(2n+1)$-manifold with
nonnegative pseudohermitian bisectional curvature and vanishing torsion. Let
$\rho$ be a real closed $(1,1)$ form with the nonnegative trace $f$ and
$\label{54}\rho_{\alpha \overline{\beta},0}=0$, for $\alpha,\beta=1,\cdots,n$.
Assume that
\begin{equation}
\label{100}\int_{0}^{\infty}\frac{1}{V(B_{0}(t))}\int_{B_{0}(t)}||\rho||d\mu
dt<\infty
\end{equation}
and
\begin{equation}
\label{106}\lim \inf_{r\rightarrow \infty}\Big [\exp(-ar^{2})\int_{B_{0}(r)}%
||\rho||^{2}d\mu\Big ]<\infty \text{,\  \textrm{\  \ for some} }a>0.
\end{equation}
Then there is a solution of the CR Poincar\'{e}- Lelong equation
\[
i\partial_{b}\overline{\partial}_{b}u=\rho.
\]
Moreover for any $0<\varepsilon<1$, $u$ satisfies
\begin{equation}%
\begin{array}
[c]{ccl}
\alpha_{1}r\int_{2r}^{\infty}k(t)dt+\beta_{1}\int_{0}^{2r}tk(t)dt & \geq  & u(x)\\
&  \geq & -\alpha_{2}r\int_{2r}^{\infty}k(t)dt-\beta_{2}\int_{0}^{\varepsilon
r}tk(x,t)dt+\beta_{3}\int_{0}^{2r}tk(t)dt
\end{array}
\label{40}%
\end{equation}
for some positive constants $\alpha_{1}$, $\beta_{i}$, $1\leq i\leq3.$ which
depend on $n$ and $\alpha_{2}$ which depends on $n,\varepsilon$.
\end{theorem}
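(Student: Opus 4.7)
The plan is to reduce the CR Poincar\'e-Lelong equation to the scalar CR Poisson equation and then invoke Theorem \ref{A1}. I would start from the CR heat equation $(\partial_{t}-\Delta_{b})H_{\alpha\bar\beta}=0$ with $H_{\alpha\bar\beta}(x,0)=\rho_{\alpha\bar\beta}$. Under the Sasakian hypothesis and the compatibility $\rho_{\alpha\bar\beta,0}=0$, the commutation relations between $\partial_{b}$, $\overline{\partial}_{b}$, the Reeb derivative $\partial_{0}$ and $\Delta_{b}$ on a manifold of vanishing torsion ensure that $H(\cdot,t)$ remains closed and that its trace $h(x,t)=\sum_{\alpha}H_{\alpha\bar\alpha}(x,t)$ evolves by the scalar heat equation with initial datum $f=\text{trace}(\rho)$. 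The $L^{2}$-type bound (\ref{106}) is precisely what ensures well-posedness and uniqueness of $H(x,t)$ in the class of solutions of exponential growth, so that the heat evolution is actually unique and smooth enough to differentiate under the time integral below.

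The next step uses the heat decomposition formula (\ref{pl53}) of Section $4$ together with Corollary \ref{c31}: these express a Poincar\'e-Lelong potential as $u=u_{0}+v$, where $u_{0}$ is determined by the asymptotic mean $f_{0}(x)=\lim_{t\to\infty}\frac{1}{V_{x}(t)}\int_{B_{x}(t)}f\,d\mu$ of the trace and $v$ solves $\Delta_{b}v=f$. The integrability assumption (\ref{100}) gives $\int_{0}^{\infty}k_{f}(o,t)\,dt<\infty$, and combined with Bishop-type volume doubling (available because the pseudohermitian Ricci curvature is nonnegative and the torsion vanishes), this forces $f_{0}\equiv 0$ on $M$, hence $u_{0}\equiv 0$. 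Consequently (\ref{pl53}) collapses to the assertion that any solution of $\Delta_{b}v=f$ automatically satisfies $i\partial_{b}\overline{\partial}_{b}v=\rho$.

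At this point I would apply Theorem \ref{A1} with $f=\text{trace}(\rho)$. Hypothesis (\ref{A1a}) is exactly (\ref{100}) after using $|f|\le n\|\rho\|$, while hypothesis (\ref{A1b}) follows from the same Bishop-type volume comparison on a Sasakian manifold with nonnegative pseudohermitian Ricci curvature. Theorem \ref{A1} then produces a solution $v$ together with the two-sided growth estimate, which in view of the reduction above yields a potential $u=v$ for $\rho$ satisfying the estimate (\ref{40}) claimed in Theorem \ref{A2}.

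The main obstacle I expect lies in the first two steps, namely the rigorous derivation of the decomposition (\ref{pl53}) and the identification $u_{0}\equiv 0$. Because $\partial_{b}$ and $\overline{\partial}_{b}$ do not commute in the K\"ahler sense on a CR manifold, one must carefully exploit the combination of vanishing torsion with the hypothesis $\rho_{\alpha\bar\beta,0}=0$ to control the Reeb-direction commutator terms that appear when one differentiates the time-integrated heat evolution of $\rho$; without these, integration over $[0,\infty)$ would pick up a nontrivial $T$-component obstructing the Poincar\'e-Lelong identity. Once that commutator analysis is carried out and combined with the uniqueness furnished by (\ref{106}), the passage to Theorem \ref{A1} and the derivation of the estimate (\ref{40}) become essentially formal.
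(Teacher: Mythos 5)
There is a genuine gap, and it sits exactly where you declare the argument ``essentially formal.'' Passing from the scalar CR Poisson equation $\Delta_{b}u=2f$ with $f=\mathrm{trace}(\rho)$ to the full tensor equation $i\partial_{b}\overline{\partial}_{b}u=\rho$ is the entire content of Theorem \ref{A2}: taking the trace of the Poincar\'e--Lelong equation gives the Poisson equation (via the commutation relation (\ref{A}) and $u_{0}=0$), but the converse implication you assert --- that ``any solution of $\Delta_{b}v=f$ automatically satisfies $i\partial_{b}\overline{\partial}_{b}v=\rho$'' --- is simply false; a scalar equation cannot by itself determine all the components $u_{\alpha\overline{\beta}}$. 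The paper closes this gap by running the CR heat flow $v(x,t)=\int_{M}H(x,y,t)u(y)\,dy$ with initial value the Poisson solution $u$ (Lemma \ref{71}), setting $\omega=u-v$, and showing that $\eta=\rho-i\partial_{b}\overline{\partial}_{b}\omega$ satisfies a Lichnerowicz--Laplacian heat equation (\ref{84}); the \emph{nonnegative pseudohermitian bisectional curvature} (not merely Ricci) then makes $\|\eta\|$ a subsolution of the heat equation, and a maximum-principle argument --- whose justification is precisely where the $L^{2}$ growth hypothesis (\ref{106}) enters, via the integral bound (\ref{107}) --- yields $\|\eta\|\le\int_{M}H(x,y,t)\|\eta\|(y,0)\,dy\to0$. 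Combined with the convergence of $v(\cdot,t_{i})-v(o,t_{i})$ to a constant, this gives $i\partial_{b}\overline{\partial}_{b}u=\rho$. None of this machinery appears in your proposal; you instead assign (\ref{106}) the role of ``well-posedness and uniqueness of the heat evolution of $\rho$,'' which is not how it is used.

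A second, related error: you have misread the notation. In this paper $u_{0}$ and $f_{0}$ denote the Reeb-direction derivatives $Tu$ and $Tf$, not components of a decomposition $u=u_{0}+v$, and $f_{0}$ is certainly not the asymptotic mean $\lim_{t\to\infty}\frac{1}{V_{x}(t)}\int_{B_{x}(t)}f\,d\mu$. Corollary \ref{c31} states that if $Tf=0$ then the Poisson solution satisfies $Tu=0$, which is what kills the term $hu_{0}$ in (\ref{pl53}) and reduces it to the Poisson equation. Your reduction to Theorem \ref{A1} (checking (\ref{A1a}) from (\ref{100}) and the volume doubling input for (\ref{A1b})) is fine as far as it goes, but the proof as proposed establishes only that $u$ solves the trace of the Poincar\'e--Lelong equation, not the equation itself.
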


With its applications plus the CR Liouvelle theorem, we have

\begin{corollary}
\label{C1} Let $(M,J,\theta)$ be a complete CR $(2n+1)$-manifold with
nonnegative pseudohermitian bisectional curvature and vanishing torsion. Let
$\rho$ be a nonnegative real closed $(1,1)$ form and $\rho_{\alpha
\overline{\beta},0}=0$. Then $\rho \equiv0$ \ if the $\rho$ satisfies

(i)
\begin{equation}
\int_{0}^{r}s\Big [\frac{1}{V(B_{0}(t))}\int_{B_{0}(t)}||\rho||d\mu\Big ]ds=o(\log r),
\label{74}
\end{equation}
(ii)
\begin{equation}
\lim \inf_{r\rightarrow \infty}\Big [\exp(-ar^{2})\int_{B_{0}(r)}||\rho||^{2}
d\mu\Big ]<\infty \text{.} \label{75}
\end{equation}

\end{corollary}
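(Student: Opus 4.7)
The plan is to apply Theorem \ref{A2} to produce a CR-plurisubharmonic potential $u$ of sublogarithmic growth, and then invoke the CR Liouville theorem alluded to in the introduction to force $u$ to be constant, whence $\rho=i\partial_{b}\overline{\partial}_{b}u\equiv 0$.

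First I would verify the hypotheses of Theorem \ref{A2} with $f=\mathrm{trace}(\rho)$. Since $\rho\ge 0$ as a real $(1,1)$-form, $f$ is nonnegative and satisfies the pointwise bound $f\le C_{n}\|\rho\|$, so $k(t)=k_{f}(o,t)\le C_{n}k_{\|\rho\|}(o,t)$. A dyadic splitting shows that (\ref{74}) forces $\int_{0}^{\infty}k_{\|\rho\|}(o,t)\,dt<\infty$: on each annulus, $\int_{2^{j}}^{2^{j+1}}k_{\|\rho\|}(o,t)\,dt\le 2^{-j}\int_{2^{j}}^{2^{j+1}}s\,k_{\|\rho\|}(o,s)\,ds$, and the right-hand side is summable by (\ref{74}). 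This gives (\ref{100}), while (\ref{75}) is exactly (\ref{106}). Theorem \ref{A2} therefore yields $u$ with $i\partial_{b}\overline{\partial}_{b}u=\rho$ and the two-sided bound (\ref{40}).

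Next I would extract sublogarithmic growth of $u$ from the upper bound in (\ref{40}). The term $\beta_{1}\int_{0}^{2r}tk(t)\,dt$ is $o(\log r)$ directly from (\ref{74}). For the term $\alpha_{1}r\int_{2r}^{\infty}k(t)\,dt$, decompose $[2r,\infty)=\bigcup_{j\ge \log_{2}(2r)}[2^{j},2^{j+1}]$ and bound each annulus integral by $2^{-j}\int_{2^{j}}^{2^{j+1}}s k(s)\,ds\le 2^{-j}\cdot o(j)$ via (\ref{74}); the resulting geometric-type series sums to $r^{-1}o(\log r)$, so after multiplying by $r$ one still has $o(\log r)$. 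Hence $u(x)=o(\log r(x))$ as $r(x)\to\infty$.

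Finally, because $\rho\ge 0$, the potential $u$ is CR-plurisubharmonic, i.e.\ $i\partial_{b}\overline{\partial}_{b}u\ge 0$. The CR Liouville theorem for plurisubharmonic functions of sublogarithmic growth on complete Sasakian manifolds of nonnegative pseudohermitian bisectional curvature (the pseudohermitian analog of the Ni--Tam Liouville theorem) then forces $u$ to be constant, and therefore $\rho=i\partial_{b}\overline{\partial}_{b}u\equiv 0$. The main obstacle is this last step: establishing the CR Liouville theorem in exactly this form requires a pseudohermitian version of a Ni--Tam-type heat-flow deformation argument, in which the vanishing-torsion and nonnegative-bisectional-curvature hypotheses enter through the parallelism of $\overline{\partial}_{b}$-closed nonnegative $(1,1)$-forms obtained as limits. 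The remaining ingredients -- verification of the Theorem \ref{A2} hypotheses and the dyadic growth estimate -- are mechanical.
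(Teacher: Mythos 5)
Your proposal follows exactly the paper's route: apply Theorem \ref{A2} to get a potential $u$ with $i\partial_b\overline{\partial}_b u=\rho$ satisfying (\ref{40}), deduce $u(x)=o(\log r)$ from (\ref{74}) via the bound (\ref{40}), and conclude $u$ is constant by the CR Liouville property, which the paper supplies as Lemma \ref{T61} (proved by precisely the Ni--Tam-type heat-flow deformation you anticipate, via Proposition \ref{76}). Your added details --- the dyadic verification that (\ref{74}) implies (\ref{100}) and that $r\int_{2r}^{\infty}k(t)\,dt=o(\log r)$ --- are correct and in fact make explicit steps the paper leaves implicit.
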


Finally, we are able to investigate the geometry and classification of
$(2n+1)$-dimensional CR Yamabe solitons $\left(  M^{2n+1},\theta,f,\mu \right)
$ satisfying the soliton equation which is a special class of solutions to the CR Yamabe flow
\begin{equation}
\left \{
\begin{array}
[c]{l}
\frac{\partial}{\partial t}\theta(t)=-2W(t)\theta \left(  t\right)  ,\\
\theta \left(  0\right)  =\mathring{\theta},
\end{array}
\right.  \label{1b}
\end{equation}
on a pseudohermitian $(2n+1)$-manifold $(M^{2n+1},\mathring{\theta})$ given by
(cf \cite{ccc}), where $W(t)$ is the Tanaka-Webster scalar curvature with
respect to the evolving contact form $\theta \left(  t\right)  $. Moreover,
this special class of solutions to the CR Yamabe flow (\ref{1b}) is given by
self-similar solutions, whose contact forms $\theta_{t}$ deform under the CR
Yamabe flow only by a scaling function depending on $t$ and reparametrizations
by a $1$-parameter family of contact diffeomorphisms; meanwhile, the CR
structure $J$ shall be invariant under these diffeomorphisms.

\begin{definition}
(\cite{ccc})(i) We call $\left(  M^{2n+1},\theta,f,\mu \right)  $ a complete
$(2n+1)$-dimensional CR Yamabe soliton if for a family $\theta_{(t)}$ of
contact forms on $(M,\mathring{\theta})$ evolving by the CR Yamabe flow
(\ref{1b}) \ on $M\times \lbrack0,T)$ with the maximal time $T$ and for any
pairs $t_{1}<t_{2}$ $,$ the contact forms $\theta_{(t_{1})}$ and
$\theta_{(t_{2})}$ differ only by a contact diffeomorphism $\Phi$ with
$\theta_{(t_{2})}=\rho(t)\Phi^{\ast}\theta_{(t_{1})}$ for some smooth function
$\rho(t)$ with $\rho(t)>0$ and $\rho(0)=1.$ Furthermore, one can have a smoth
function $f$ and a constant $\mu=-\frac{1}{2}\rho^{\prime}(0)$ such that%
\[
\left \{
\begin{array}
[c]{lcl}
W\theta+\frac{1}{2}\emph{L}_{X_{f}}\theta & = & \mu \theta,\\
f_{\alpha \alpha}+iA_{\alpha \alpha}f & = & 0,
\end{array}
\right.
\]
Here $\emph{L}_{X_{f}}$ denotes Lie derivative by $X_{f}$ with $L_{X_{f}%
}J\equiv2(f_{\alpha \alpha}+iA_{\alpha \alpha}f)\theta^{\alpha}\otimes
Z_{\bar{\alpha}}+2(f_{\bar{\alpha}\bar{\alpha}}-iA_{\bar{\alpha}\bar{\alpha}%
}f)\theta^{\bar{\alpha}}\otimes Z_{\alpha}\  \mathrm{mod}\  \theta$ and
$X_{f}=if_{\alpha}Z_{\overline{\alpha}}-if_{\overline{\alpha}}Z_{\alpha
}-f\mathbf{T.}$ That is
\begin{equation}
\begin{array}
[c]{lcl}
f_{0}+2W & = & 2\mu
\end{array}
\label{2017-0}
\end{equation}
with $f_{\alpha \alpha}+iA_{\alpha \alpha}f=0.$ It is called \textit{shrinking}
if $\mu>0$, \textit{steady} if $\mu=0$, and \textit{expanding} if $\mu<0$.

(ii) A complete $(2n+1)$-dimensional CR Yamabe soliton is called a complete
gradient CR Yamabe soliton if there exists a smooth function $u$ on $M$ such
that
\begin{equation}
\Delta_{b}u=2(W-\mu) \label{2017}
\end{equation}
which is the same as
\[
\Delta_{b}u=-f_{0}.
\]
\end{definition}

\begin{remark}
\label{r2017} Let\textbf{\ }$(\mathbf{H}^{n},J,\theta,u,\mu)$ be a Heisenberg
$(2n+1)$-manifold with $W=0$ and $A_{\alpha \beta}=0.$ It is a Gaussian-type
\textit{gradient} CR Yamabe soliton for $u=\mu|z|^{2}$ with $\mu=-1,0,1.$
\end{remark}

In particular, every closed CR Yamabe soliton is a gradient CR Yamabe soliton
(\ref{2017-0}). In general, it follows from Corollary \ref{C12} that

\begin{corollary}
\label{C4} Any complete noncompact \textbf{steady }CR Yamabe soliton with
nonnegative pseudohermitian Ricci curvature tensors and vanishing torsion is a
complete \textbf{steady }gradient CR Yamabe soliton if
\[
\int_{0}^{\infty}k(t)dt<\infty
\]
and
\[
\sup_{\partial B_{o}(r)}W=o(r^{-1})
\]
as $r\rightarrow \infty.$ Here $W(x)$ is the nonnegative Tanaka-Webster scalar
curvature and
\[
k(x,t)=\frac{1}{V_{x}(t)}\int_{B_{x}(t)}Wd\mu.
\]
\end{corollary}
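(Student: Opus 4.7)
My plan is to reduce the claim directly to Corollary \ref{C12}. By the definition of a steady CR Yamabe soliton we have $\mu=0$, so the soliton identity $f_{0}+2W=2\mu$ reduces to $f_{0}=-2W$. In turn, to upgrade the given soliton to a \emph{gradient} steady CR Yamabe soliton we must exhibit a smooth function $u$ satisfying
\[
\Delta_{b}u \;=\; 2(W-\mu)\; =\; 2W \;=\; -f_{0}.
\]
Thus the whole problem becomes: solve the CR Poisson equation $\Delta_{b}u = 2W$ on $M$.

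The next step is to check that the function $g(x):=2W(x)\ge 0$ satisfies the hypotheses of Corollary \ref{C12}. The spherical average $k_{g}(o,t)=\tfrac{1}{V_{o}(t)}\int_{B_{o}(t)}2W\,d\mu$ is precisely $2k(t)$, where $k(t)$ is the quantity appearing in the statement of the corollary I am proving. Hence
\[
\int_{0}^{\infty}k_{g}(t)\,dt \;=\; 2\int_{0}^{\infty}k(t)\,dt \;<\;\infty,
\]
and the boundary decay assumption transfers verbatim:
\[
\sup_{\partial B_{o}(r)}g \;=\; 2\sup_{\partial B_{o}(r)}W \;=\; o(r^{-1}).
\]
The curvature and torsion assumptions (nonnegative pseudohermitian Ricci, vanishing torsion) coincide with the standing hypotheses of Corollary \ref{C12}, so its application is legitimate.

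Corollary \ref{C12} then produces a (smooth) solution $u$ of $\Delta_{b}u=2W$ on $M$. Combined with $2W=-f_{0}$ this gives $\Delta_{b}u=-f_{0}=2(W-\mu)$, which is exactly the defining equation of a gradient CR Yamabe soliton recorded in (\ref{2017}). Smoothness of $u$ follows from subelliptic regularity for $\Delta_{b}$ together with the smoothness of $W$ on the pseudohermitian manifold. I do not foresee any real obstacle: once the steady condition is used to read the problem as a CR Poisson equation with source $2W$, Corollary \ref{C12} does all the analytic work. The only point that requires a line of verification is that the radial bounds on $W$ assumed here translate, up to the harmless constant factor $2$, into the averaging and decay hypotheses needed to invoke Corollary \ref{C12}, which I have done above.
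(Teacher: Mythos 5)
Your proposal is correct and follows exactly the route the paper intends: the paper derives Corollary \ref{C4} directly from Corollary \ref{C12} by observing that, in the steady case $\mu=0$, the gradient condition (\ref{2017}) is just the CR Poisson equation $\Delta_{b}u=2W$ with the nonnegative source $2W$, whose averaged-integrability and decay hypotheses are those of Corollary \ref{C12} up to the harmless factor $2$. Nothing further is needed.
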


As a consequence of Corollary \ref{C1} with $\rho_{\alpha \overline{\beta}%
}=R_{\alpha \overline{\beta}}$, together with the Liouvelle property Lemma
\ref{T61}, we are able to investigate the structure of complete noncompact CR
$(2n+1)$-manifolds of nonnegative pseudohermitian bisectional curvature and
vanishing torsion (i.e. Sasakian manifolds) and complete \textbf{steady }CR
Yamabe solitons as well. Note that by CR Bianchi identity (\cite{l1}), we have
$R_{\alpha \overline{\beta},0}=0$ in a CR manifold with vanishing torsion.

\begin{corollary}
\label{C3} Let $(M^{2n+1},J,\theta,\varphi,\mu)$ be a complete noncompact CR
$(2n+1)$-manifold of nonnegative pseudohermitian bisectional curvature and
\textbf{vanishing torsion}. Assume that

\noindent (i)
\begin{equation}
\int_{0}^{r}s\Big (\frac{1}{V(B_{0}(t))}\int_{B_{0}(t)}||Ric||d\mu\Big )ds=o(\log r),
\label{2017-1}
\end{equation}

\noindent (ii)
\begin{equation}
\lim \inf_{r\rightarrow \infty}\Big [\exp(-ar^{2})\int_{B_{0}(r)}||Ric||^{2}
d\mu\Big ]<\infty \text{.} \label{2017-2}
\end{equation}
Then $M$ must be the CR flat space form. In particular any complete
\textbf{steady }CR Yamabe soliton of nonnegative pseudohermitian bisectional
curvature and vanishing torsion with (\ref{2017-1}) and (\ref{2017-2}) must be
the CR flat space form.
\end{corollary}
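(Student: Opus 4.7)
The plan is to reduce Corollary \ref{C3} to Corollary \ref{C1} applied to the pseudohermitian Ricci form, to use a pointwise algebraic trace argument to promote vanishing Ricci to vanishing bisectional curvature, and then to invoke the CR Liouvelle property (Lemma \ref{T61}) together with the vanishing torsion to extract the CR flat space form conclusion.

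First, I would set $\rho_{\alpha\overline{\beta}}:=R_{\alpha\overline{\beta}}$. The nonnegativity of the pseudohermitian bisectional curvature guarantees that $\rho$ is a nonnegative real closed $(1,1)$ form whose trace is the Tanaka--Webster scalar curvature $W\geq 0$. Because the torsion vanishes, the CR Bianchi identity cited in the excerpt gives $R_{\alpha\overline{\beta},0}=0$, so $\rho$ satisfies the structural hypothesis of Corollary \ref{C1}. Since $\|\rho\|=\|\mathrm{Ric}\|$, assumptions (\ref{2017-1}) and (\ref{2017-2}) are exactly conditions (\ref{74}) and (\ref{75}) for this choice of $\rho$. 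Corollary \ref{C1} therefore yields $\rho\equiv 0$, i.e.\ $R_{\alpha\overline{\beta}}\equiv 0$ on $M$.

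Second, I would upgrade vanishing Ricci to vanishing bisectional curvature. Diagonalizing in an adapted unitary frame, the identity
\[
0=R_{\beta\overline{\beta}}=\sum_{\alpha}R_{\alpha\overline{\alpha}\beta\overline{\beta}}
\]
forces every nonnegative summand $R_{\alpha\overline{\alpha}\beta\overline{\beta}}$ to vanish pointwise. Polarizing via the Hermitian symmetries and the complex Bianchi identities, which under vanishing torsion reduce to their K\"ahler analogues for the Tanaka--Webster connection, then yields $R_{\alpha\overline{\beta}\gamma\overline{\delta}}\equiv 0$.

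Finally, I would invoke Lemma \ref{T61} to promote this pointwise vanishing of the Tanaka--Webster curvature into the CR flat space form conclusion: once the torsion vanishes the Tanaka--Webster curvature is the obstruction to local Heisenberg-type flatness, and the Liouvelle-type rigidity supplied by Lemma \ref{T61} gives the globalization. The steady CR Yamabe soliton statement is then an immediate specialization, since (\ref{2017-1}) and (\ref{2017-2}) are imposed directly. I expect the main obstacle to be the CR adaptation of the algebraic step from vanishing Ricci to vanishing pseudohermitian curvature: one must verify that the Bianchi identities used in the K\"ahler proof carry over faithfully to the Tanaka--Webster connection once the torsion vanishes, and confirm that Lemma \ref{T61} is strong enough to convert this curvature flatness into the full CR flat space form structure.
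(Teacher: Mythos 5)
Your proposal is correct and follows essentially the same route the paper intends: apply Corollary \ref{C1} with $\rho_{\alpha\overline{\beta}}=R_{\alpha\overline{\beta}}$ (using the CR Bianchi identity under vanishing torsion to get $R_{\alpha\overline{\beta},0}=0$ and matching (\ref{2017-1})--(\ref{2017-2}) with (\ref{74})--(\ref{75})), then use nonnegativity of the bisectional curvature and the trace/polarization argument to pass from $\mathrm{Ric}\equiv 0$ to vanishing of the full Tanaka--Webster curvature. The one small correction is that Lemma \ref{T61} does its work inside the proof of Corollary \ref{C1} (forcing the potential $u$ to be constant so that $\rho=i\partial_b\overline{\partial}_b u\equiv 0$); once curvature and torsion both vanish, the CR flat space form conclusion is local structure theory and needs no further Liouville input.
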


\begin{remark}
In the paper of \cite{ccc}, the second named author obtained the structure of
complete $3$-dimensional pseudo-gradient CR Yamabe solitons (shrinking, or
steady, or expanding) of vanishing torsion. \
\end{remark}

The rest of the paper is organized as follows. In section $2$, we will discuss
the condition on $f$ so that $\triangle_{b}u=f$ \ has a solution $u$ and we
also discuss the properties of $u$. In section $3,$ we derive some basic
results for solutions to the CR heat equation on complete noncompact
pseudohermitian $(2n+1)$-manifolds. In section $4,$ we solve the CR
Poincar\'{e}-Lelong equation by first solving the CR Poisson equation as in
section $2.$ In section $5$, we are able to investigate the structure of
complete noncompact CR $(2n+1)$-manifolds of nonnegative pseudohermitian
bisectional curvature and vanishing torsion (Sasakian) and complete
\textbf{steady }CR Yamabe solitons as well.

\begin{remark}
In this paper, the CR heat equation is for the sublaplacian
$$
\triangle_b\,=\,\sum_{\alpha=1}^n \big(Z_{\alpha}Z_{\bar{\alpha}}+Z_{\bar{\alpha}}Z_{\alpha}\big )
$$
not for the Kohn Laplacian $\Box_b$. However, the result for $\triangle_b$ can be extended easily to the
the Kohn Laplacian (at least on $(0,1$-forms) $\Box_b=\triangle_b+i\beta T$ with $|\beta|<1$.
Then we may use meromorphic extension to deal with other $\beta\in {\mathbf C}\setminus {\mathcal E}$ where ${\mathcal E}$ is a discrete exceptional set. See {\it e.g.,} \cite {ccfi}.
\end{remark}
\section{The CR Poisson Equation}

Let $(M,J,\theta)$ be a complete noncompact strictly pseudoconvex CR
$(2n+1)$-manifold with nonnegative pseudohermitian Ricci curvature tensors and
vanishing torsion. Let $f$ be a nonnegative function on $M$. In this section,
we will discuss the condition on $f$ so that $\triangle_{b}u=f$ \ has a
solution $u$ and we also discuss the properties of $u$.

Let $H(x,y,t)$ be the heat kernel of the CR heat equation on $M$, we define
the Green function
\[
G(x,y)=\int_{0}^{\infty}H(x,y,t)dt,
\]
if the integral on the right side converges. We can checks that $G$ is
positive and $\triangle_{b}G(x,y)=-\delta_{x}(y)$. It follows from the
properties of heat kernel and Volume double property (\cite{ccht} and
\cite{bbg}) that we obtain the following result :

\begin{theorem}
Let $(M,J,\theta)$ be a complete noncompact strictly pseudoconvex CR
$(2n+1)$-manifold with nonnegative pseudohermitian Ricci curvature tensors and
vanishing torsion. Then we have
\begin{equation}
C_{1}\int_{r^{2}(x,y)}^{\infty}V_{x}^{-1}(\sqrt{t})dt\leq G(x,y)\leq C_{2}
\int_{r^{2}(x,y)}^{\infty}V_{x}^{-1}(\sqrt{t})dt, \label{9}
\end{equation}
where $C_{1},C_{2}$ are two positive constants which depend $n$.
\end{theorem}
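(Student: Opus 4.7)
The plan is to carry over the standard Li--Yau style Green function estimate from the Riemannian setting to the CR setting, using the Gaussian two-sided bounds on the CR heat kernel $H(x,y,t)$ and the volume doubling property recorded in \cite{ccht} and \cite{bbg}. Concretely, under the hypotheses (nonnegative pseudohermitian Ricci and vanishing torsion) one has bounds of the form
\[
\frac{c_{1}}{V_{x}(\sqrt{t})}\exp \!\Bigl(-\frac{c_{2}r^{2}(x,y)}{t}\Bigr)\leq H(x,y,t)\leq \frac{C_{1}}{V_{x}(\sqrt{t})}\exp \!\Bigl(-\frac{c_{3}r^{2}(x,y)}{t}\Bigr),
\]
together with the doubling inequality $V_{x}(\lambda s)\leq C\lambda^{Q}V_{x}(s)$ for all $\lambda\geq 1$ (with $Q$ a doubling exponent depending on $n$). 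These two ingredients are exactly what is needed; the proof is then a direct integration in $t$.

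The key computation is to split $G(x,y)=\int_{0}^{\infty}H(x,y,t)\,dt$ at the natural scale $t=r^{2}:=r^{2}(x,y)$. For the lower bound I would use the heat kernel lower bound on the ``diagonal scale'' $t\geq r^{2}$, on which the Gaussian factor $\exp(-c_{2}r^{2}/t)\geq e^{-c_{2}}$ is bounded below by a positive constant; this immediately gives
\[
G(x,y)\;\geq\; \int_{r^{2}}^{\infty}H(x,y,t)\,dt\;\geq\; c_{1}e^{-c_{2}}\int_{r^{2}}^{\infty}V_{x}^{-1}(\sqrt{t})\,dt,
\]
which is the desired left-hand inequality.

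For the upper bound I again split at $t=r^{2}$. The tail $\int_{r^{2}}^{\infty}H(x,y,t)\,dt$ is bounded above by $C_{1}\int_{r^{2}}^{\infty}V_{x}^{-1}(\sqrt{t})\,dt$ just by dropping the exponential. The short-time piece $\int_{0}^{r^{2}}H(x,y,t)\,dt$ is where the work lies: the exponential Gaussian decay $\exp(-c_{3}r^{2}/t)$ must beat the possible blow-up of $V_{x}^{-1}(\sqrt{t})$ as $t\downarrow 0$. The standard device is the change of variables $s=r^{2}/t$, combined with the doubling inequality $V_{x}(\sqrt{t})^{-1}\leq C(r/\sqrt{t})^{Q}V_{x}(r)^{-1}=Cs^{Q/2}V_{x}(r)^{-1}$, which reduces the short-time integral to
\[
\frac{Cr^{2}}{V_{x}(r)}\int_{1}^{\infty}s^{Q/2-2}e^{-c_{3}s}\,ds\;\leq\;\frac{C'r^{2}}{V_{x}(r)}.
\]
One then observes $r^{2}/V_{x}(r)\leq C''\int_{r^{2}}^{\infty}V_{x}^{-1}(\sqrt{t})\,dt$ (again by doubling, writing $\int_{r^{2}}^{4r^{2}}V_{x}^{-1}(\sqrt{t})\,dt\geq c\,r^{2}/V_{x}(2r)\geq c'\,r^{2}/V_{x}(r)$), which absorbs the short-time piece into the tail and yields the right-hand inequality.

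The main obstacle, and the only genuinely CR-specific input, is the validity of the two-sided Gaussian heat kernel bounds above: the Riemannian proof by Li--Yau uses a gradient estimate that requires a parabolic Harnack inequality, which in the sub-Riemannian/Sasakian setting is nontrivial. I would invoke these bounds in the form established in \cite{ccht} (for the CR heat kernel under nonnegative pseudohermitian Ricci and vanishing torsion) and \cite{bbg} (for the attendant volume doubling and Poincaré inequalities); once those are in hand the argument is the routine integration sketched above.
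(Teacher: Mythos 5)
Your proposal is correct and follows essentially the same route as the paper: both split $G$ at $t=r^{2}(x,y)$, use the two-sided Gaussian heat kernel bounds and volume doubling from \cite{ccht} and \cite{bbg}, control the short-time piece by trading the Gaussian factor against the doubling exponent, and get the lower bound by discarding the short-time piece. The only cosmetic difference is that the paper handles the short-time integral by the substitution $s=r^{4}/t$ and compares integrands directly with the tail, whereas you first reduce it to $r^{2}/V_{x}(r)$ and then absorb that into the tail by doubling; these are equivalent.
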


\begin{proof}
It follows from the definition of the Green function $G(x,y)$, we have
\[
G(x,y)=\int_{0}^{\infty}H(x,y,t)dt=\int_{0}^{r^{2}}H(x,y,t)dt+\int_{r^{2}
}^{\infty}H(x,y,t)dt
\]
a) If $t\geq r^{2}(x,y)$, it follows from \cite[Proposition 3.1.]{ccht} that
\begin{equation}
H(x,y,t)\leq C_{3}V^{-\frac{1}{2}}(B_{x}(\sqrt{t}))V^{-\frac{1}{2}}%
(B_{y}(\sqrt{t}))\exp(-C_{4}\frac{r^{2}(x,y)}{t}), \label{1}%
\end{equation}
where $C_{3},C_{4}$ are positive constants. Moreover, from the CR volume
doubling property (\cite[Corollary 1.1.]{ccht}),
\[
V(B_{x}(\sqrt{t}))\leq V(B_{y}(\sqrt{t}+r))\leq V(B_{y}(2\sqrt{t}))\leq
C_{5}2^{2C_{6}}V(B_{y}(\sqrt{t})),
\]
where $C_{5}$ is a positive constant which depends only on $n.$ Thus
\begin{equation}
V^{-\frac{1}{2}}(B_{y}(\sqrt{t}))\leq C_{5}^{\frac{1}{2}}2^{C_{6}}V^{-\frac
{1}{2}}(B_{x}(\sqrt{t})). \label{2}%
\end{equation}
It follows from (\ref{1}) and (\ref{2}) that
\begin{equation}
H(x,y,t)\leq C_{7}V^{-1}(B_{x}(\sqrt{t})) \label{3}
\end{equation}
and then
\begin{equation}
G(x,y)\leq \int_{0}^{r^{2}}H(x,y,t)dt+C_{7}\int_{r^{2}}^{\infty}V^{-1}
(B_{x}(\sqrt{t}))dt. \label{5}
\end{equation}
b) If $t\leq r^{2}(x,y)$, again from \cite[Proposition 3.1.]{ccht}, we have
\[
V(B_{x}(\sqrt{t}))\leq V(B_{y}(r+\sqrt{t}))\leq C_{5}(1+\frac{r}{\sqrt{t}
})^{2C_{6}}V(B_{y}(\sqrt{t})),
\]
that is,
\[
V^{-\frac{1}{2}}(B_{y}(\sqrt{t}))\leq C_{5}^{\frac{1}{2}}(1+\frac{r}{\sqrt{t}
})^{C_{6}}V^{-\frac{1}{2}}(B_{x}(\sqrt{t})).
\]
Thus
\begin{equation}
H(x,y,t)\leq C_{3}C^{\frac{1}{2}}(1+\frac{r}{\sqrt{t}})^{C_{6}}V^{-1}
(B_{x}(\sqrt{t}))\exp(-C_{4}\frac{r^{2}}{t}).
\end{equation}
Since $(1+\frac{r}{\sqrt{t}})\leq C_{8}\exp\big (\frac{C_{4}}{2}\frac{r^{2}}{t}\big )$,
where $C_{8}$ is a positive constant depending $C_{6}$, we have
\begin{equation}
H(x,y,t)\leq C_{9}V^{-1}(B_{x}(\sqrt{t}))\exp(-\frac{C_{4}}{2}\frac{r^{2}}
{t}), \label{4}
\end{equation}
where $C_{9}$ is a positive constant which depends only on $n$. \ Letting
$s=\frac{r^{4}}{t}$ with $r^{2}\leq s$, we have
\begin{equation}
\begin{array}
[c]{ccl}
\int_{0}^{r^{2}}H(x,y,t)dt & \leq & C_{9}\int_{0}^{r^{2}}V^{-1}(B_{x}(\sqrt
{t}))\exp(-\frac{C_{4}}{2}\frac{r^{2}}{t})dt\\
& = & C_{9}\int_{r^{2}}^{\infty}V^{-1}(B_{x}(\frac{r^{2}}{\sqrt{s}}
))\exp(-\frac{C_{4}}{2}\frac{s}{r^{2}})\frac{r^{4}}{s^{2}}ds.
\end{array}
\label{6}
\end{equation}
Again from \cite[Proposition 3.1.]{ccht}, we have
\[
V(B_{x}(\sqrt{s}))=V(B_{x}(\frac{s}{r^{2}}\frac{r^{2}}{\sqrt{s}}))\leq
C_{5}(\frac{s}{r^{2}})^{2C_{6}}V(B_{x}(\frac{r^{2}}{\sqrt{s}})),
\]
that is,
\begin{equation}
V^{-1}(B_{x}(\frac{r^{2}}{\sqrt{s}}))=V(B_{x}(\frac{s}{r^{2}}\frac{r^{2}%
}{\sqrt{s}}))\leq C_{5}(\frac{s}{r^{2}})^{2C_{6}}V^{-1}(B_{x}(\sqrt{s})).
\label{7}%
\end{equation}
From (\ref{6}) and (\ref{7}), we have
\begin{equation}
\begin{array}
[c]{ccl}
\int_{0}^{r^{2}}H(x,y,t)dt & \leq & C_{9}C_{5}\int_{r^{2}}^{\infty}
V^{-1}(B_{x}(\sqrt{s}))(\frac{r^{2}}{s})^{2-2C_{6}}\exp(-\frac{C_{4}}{2}
\frac{s}{r^{2}})ds.
\end{array}
\end{equation}
However, the function $x^{2C_{6}-2}\exp \big (-\frac{C_{4}}{2}x\big )$ is bound from
above. So we have
\begin{equation}
\int_{0}^{r^{2}}H(x,y,t)dt\leq C_{10}\int_{r^{2}}^{\infty}V^{-1}(B_{x}
(\sqrt{t}))dt, \label{8}
\end{equation}
where $C_{10}$ is a positive constant which depends only on $n$. Finally it
follow from (\ref{5}) and (\ref{8}) that
\[
G(x,y)\leq C_{2}\int_{r^{2}}^{\infty}V^{-1}(B_{x}(\sqrt{t}))dt,
\]
where $C_{2}$ is a positive constant which depends only on $n$. By using the
low bound of heat kernel, volume doubling property as in \cite{ccht} and
\cite{bbg}, we can also prove
\[
G(x,y)\geq C_{1}\int_{r^{2}}^{\infty}V^{-1}(B_{x}(\sqrt{t}))dt,
\]
where $C_{1}$ is a positive constant which depends only on $n$.
\end{proof}

\begin{theorem}
\label{91} Let $(M,J,\theta)$ be a complete noncompact strictly pseudoconvex
CR $(2n+1)$-manifold with nonnegative pseudohermitian Ricci curvature tensors
and vanishing torsion. Assume $(M,J,\theta)$ is nonparabolic and there is a
constant $\sigma>0$ such that the minimal Green's function $G(x,y)$ satisfies
\begin{equation}
\sigma^{-1}\frac{r^{2}(x,y)}{V(B_{x}(r(x,y)))}\leq G(x,y)\leq \sigma \frac
{r^{2}(x,y)}{V(B_{x}(r(x,y)))} \label{10}
\end{equation}
for all $x\neq y$ in $M$. Let $f$ be a nonnegative function and let
$k(x,t)=k_{f}(x,t)$ and $k(t)=k(o,t)$, where $o\in M$ is a fixed point.
Suppose that
\[
\int_{0}^{\infty}k(t)dt<\infty.
\]
Then the CR Poisson equation
\[
\triangle_{b}u=f
\]
has a solution $u$ such that for all $1>\varepsilon>0$,
\begin{equation}
\begin{array}
[c]{ccl}
\alpha_{1}\int_{2r}^{\infty}k(t)dt+\beta_{1}\int_{0}^{2r}k(t)dt & \geq & u(x)\\
&  \geq & -\alpha_{2}r\int_{2r}^{\infty}k(t)dt-\beta_{2}\int_{0}^{\varepsilon
r}tk(x,t)dt+\beta_{3}\int_{0}^{2r}tk(t)dt
\end{array}
\end{equation}
for some positive constants $\alpha_{1}$, $\alpha_{2}$, $\beta_{i}$, $1\leq
i\leq3.$
\end{theorem}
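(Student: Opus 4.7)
The plan is to build $u$ as the Green's potential $u(x) = -\int_M G(x,y) f(y)\, d\mu(y)$, obtained as the monotone limit, as $R\to\infty$, of the Dirichlet solutions $u_R(x) = -\int_{B_o(R)} G_R(x,y) f(y)\, d\mu(y)$ on the exhausting balls $B_o(R)$. Since $\triangle_b G(x,\cdot) = -\delta_x$, the identity $\triangle_b u = f$ is automatic once the potential converges, so the whole problem reduces to (i) showing finiteness of the integral pointwise, and (ii) extracting the two-sided bounds on $u$ from the two-sided Green's estimate (\ref{10}). Convergence of $u_R$ to a finite limit, and the elliptic compactness needed to pass to the limit, will both follow from the forthcoming upper bound, applied uniformly in $R$.

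To obtain the quantitative estimates I would split the integration in $y$ into the regions $B_o(2r)$ and $M\setminus B_o(2r)$, where $r=r(x)$. On the far region the triangle inequality yields $r(x,y)\geq \frac{1}{2}r(o,y)$, and the CR volume doubling from \cite{ccht} allows one to replace $V_x$ by $V_o$ at comparable radii; the upper Green's bound $G(x,y)\le\sigma r^2(x,y)/V_x(r(x,y))$ then converts the far integral into one against $t^2/V_o(t)$ over $t\geq 2r$. A coarea identity with $I_o(t):=V_o(t)k(t)=\int_{B_o(t)}f\,d\mu$ followed by integration by parts produces a tail of $\int_{2r}^\infty k(t)\,dt$, giving both the $\alpha_1$-term in the upper bound and, with the extra length factor inherent to the far Green's estimate, the $-\alpha_2 r\int_{2r}^\infty k(t)\,dt$ term in the lower bound. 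On the near region $B_o(2r)$ the same coarea procedure, but applied to $I_x(t)=V_x(t)k(x,t)$ against $t^2/V_x(t)$, yields $\int_0^{2r}k(t)\,dt$ (or $\int_0^{2r}tk(t)\,dt$) after invoking doubling to replace $k(x,\cdot)$ by $k(\cdot)$. For the lower bound on $u$ itself, I would use the \emph{lower} Green's estimate restricted to $B_o(2r)\setminus B_x(\varepsilon r)$, where $r(x,y)\geq \varepsilon r$, to produce the positive contribution $\beta_3\int_0^{2r}tk(t)\,dt$; the complementary inner piece on $B_x(\varepsilon r)$ is controlled in terms of $k(x,t)$ and supplies the $-\beta_2\int_0^{\varepsilon r}tk(x,t)\,dt$ term.

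The main obstacle is the bookkeeping between averages centered at $x$ and averages centered at $o$: the hypothesis only provides control of $k(t)=k(o,t)$, whereas the Green's estimate naturally produces $k(x,t)$. The swap is performed by volume doubling and the triangle inequality, but must be carried out separately on the inner and outer annuli, and the constant $\varepsilon$ appears precisely because the doubling comparison degenerates when $r(x,y)$ is small compared with $r(x)$. A subsidiary difficulty is justifying that the boundary terms in the coarea integration by parts vanish, since $\int_0^\infty k(t)\,dt<\infty$ alone does not force pointwise decay of $t^2 k(x,t)$; this will be handled by performing the integration by parts first on the truncated potential $u_R$, establishing all estimates uniformly in $R$, and then passing to the limit $R\to\infty$.
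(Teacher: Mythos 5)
There is a genuine gap, and it is located at the very first step: the ansatz $u(x)=-\int_M G(x,y)f(y)\,d\mu(y)$ (equivalently, the monotone limit of $-\int_{B_o(R)}G_R(x,y)f\,d\mu$) generally \emph{diverges} under the hypothesis $\int_0^\infty k(t)\,dt<\infty$. With $G(x,y)\asymp r^2(x,y)/V_x(r(x,y))$, the far--field part of the plain potential is comparable, after the coarea/integration-by-parts step you describe, to $\int_{2r}^\infty t\,k(t)\,dt$ plus a boundary term $\lim_{t\to\infty}t^2k(t)$ --- not to $\int_{2r}^\infty k(t)\,dt$ as you assert. The hypothesis controls $\int k$, not $\int tk$: already in the Euclidean (or Heisenberg) model, $f\sim r^{-1-\epsilon}$ with $0<\epsilon\le 1$ gives $\int_0^\infty k(t)\,dt<\infty$ while the Green's potential of $f$ is identically $-\infty$. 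So the difficulty is not merely the ``subsidiary'' boundary-term issue you flag at the end; the bulk term is uncontrolled, and no amount of truncation and uniform estimation can rescue a family $u_R$ that genuinely tends to $-\infty$ pointwise.

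The missing idea is the renormalization used in the paper: one sets
\[
u_R(x)=\int_{B_o(R)}\bigl(G_R(o,y)-G_R(x,y)\bigr)f(y)\,dy ,
\]
so that $u_R(o)=0$, and on the region $r(y)\ge 2r(x)$ one estimates the \emph{difference} by the subgradient estimate of \cite{ckt} together with the Harnack inequality,
\[
|G_R(o,y)-G_R(x,y)|\;\le\; r\sup_{z\in B_o(r)}|\nabla_{b,z}G_R(z,y)|\;\le\; C\,\frac{r}{r(y)}\,G(o,y).
\]
The extra factor $r/r(y)$ is exactly what converts the divergent $\int t\,k(t)\,dt$ into the convergent tail $r\int_{2r}^{\infty}k(t)\,dt$ appearing in the statement. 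The renormalization is also forced by the shape of the claimed estimate itself: your candidate $u$ is manifestly nonpositive, whereas the asserted lower bound contains the positive term $\beta_3\int_0^{2r}tk(t)\,dt$, which in the paper's proof arises precisely from the added piece $\int_{B_o(2r)}G_R(o,y)f(y)\,dy$ bounded below via the lower Green's estimate --- a term that has no counterpart in an unnormalized potential. Your treatment of the near region $B_o(2r)$, the $\varepsilon r$-splitting around $x$, and the use of volume doubling to trade $k(x,\cdot)$ for $k(o,\cdot)$ are all in line with the paper, but without the difference-of-Green's-functions device the construction does not get off the ground.
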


\begin{proof}
It follows from (\ref{9}) and (\ref{10}) that
\begin{equation}
C^{-1}\frac{r^{2}(x,y)}{V(B_{x}(r(x,y)))}\leq \int_{r}^{\infty}\frac{t}%
{V(B_{x}(t))}dt\leq C\frac{r^{2}(x,y)}{V(B_{x}(r(x,y)))},\label{11}%
\end{equation}
where $C$ is positive constant which depends on $n$ and $\sigma$. For all
$R>0$, let $G_{R}$ be the positive Green's function on $B_{o}(R)$ with zero
boundary value and denote
\[
u_{R}(x)=\int_{B_{o}(R)}(G_{R}(o,y)-G_{R}(x,y))f(y)dy.
\]
Then
\[
\triangle_{b}u=f
\]
in $B_{o}(R)$ and $u_{R}(o)=0$. For any $x$ with $r(x)=r$ with $R\gg r$, then
\begin{equation}
u_{R}(x)=I+II\label{12}
\end{equation}
with
\[
\begin{array}
[c]{ccc}
I & = & \int_{B_{o}(R)\backslash B_{o}(2r)}(G_{R}(o,y)-G_{R}(x,y))f(y)dy
\end{array}
\]
and
\[
\begin{array}
[c]{ccc}
II & = & \int_{B_{o}(2r)}(G_{R}(o,y)-G_{R}(x,y))f(y)dy.
\end{array}
\]
To estimate $I$, let $y$ be any point in $B_{o}(R)\backslash B_{o}(2r)$, then
$r_{1}=r(y)\geq2r=2r(x)$ and $r(z,y)\geq \frac{1}{2}r_{1}$ if $z\in B_{o}(r).$
Also $B_{o}(\frac{1}{2})\subseteq B_{o}(2r)$. \ Hence by the subgradient
estimate \cite{ckt},
\begin{equation}%
\begin{array}
[c]{ccl}
|G_{R}(o,y)-G_{R}(x,y)| & \leq & r\sup_{z\in B_{o}(r)}|\nabla_{b,z}%
G_{R}(z,y)|\\
& \leq & C_{1}\frac{r}{r_{1}}\sup_{z\in B_{o}(r)}G_{R}(z,y)\\
& \leq & C_{2}\frac{r}{r_{1}}G(o,y)\\
& \leq & C_{3}\frac{r}{r_{1}}\int_{r_{1}}^{\infty}\frac{t}{V(b_{o}(t))}dt,
\end{array}
\end{equation}
where $C_{1},C_{2},C_{3}$ are constant depending only on $n.$ Here we
used the Harnack inequality for $G_{R}(.,y)$ and the fact that $G_{R}(o,y)\leq
G(o,y)$.
\begin{equation}
\begin{array}
[c]{ccl}
|I| & \leq & C_{3}r\int_{B_{o}(R)\backslash B_{o}(2r)}\frac{1}{r(y)}%
(\int_{r(y)}^{\infty}\frac{t}{V(B_{o}(t))}dt)f(y)dy\\
& = & C_{3}r\int_{2r}^{R}t^{-1}(\int_{t}^{\infty}\frac{t}{V(B_{o}(t))}%
dt)(\int_{\partial B_{o}(t)}f(y)dy)dt\\
& = & C_{3}r[t^{-1}(\int_{t}^{\infty}\frac{t}{V(B_{o}(t))}dt)(\int_{B_{o}%
(t)}f(y)dy)|_{2r}^{R}\\
& &-  \int_{2r}^{R}(\int_{B_{o}(t)}f(y)dy)[-\frac{1}{t^{2}}\int_{t}^{\infty
}\frac{s}{V(B_{o}(s))}ds-\frac{1}{V(B_{o}(t))}]dt]\\
& \leq & C_{3}r[R^{-1}(\int_{R}^{\infty}\frac{R}{V(B_{o}(R))}dt)(\int
_{B_{o}(R)}f(y)dy)\\
& & + \int_{2r}^{R}(\int_{B_{o}(t)}f(y)dy)[\frac{1}{t^{2}}\int_{t}^{\infty
}\frac{s}{V(B_{o}(s))}ds+\frac{1}{V(B_{o}(t))}]dt]\\
& \leq & C_{4}r[Rk(R)+\int_{2r}^{R}k(t)dt]
\end{array}
\label{13}
\end{equation}
for some positive constants $C_{4}$ which depend on $n$ and $\sigma$. Here we
have used (\ref{11}). Moreover
\begin{equation}
\begin{array}
[c]{ccl}
\int_{B_{o}(2r)}G_{R}(o,y)f(y)dy & \leq & \int_{B_{o}(2r)}G(o,y)f(y)dy\\
& \leq & C_{5}\int_{0}^{2r}(\int_{t}^{\infty}\frac{s}{V(B_{o}(s))}%
ds)(\int_{\partial B_{o}(t)}f)dt\\
& = & C_{5}[(\int_{t}^{\infty}\frac{s}{V(B_{o}(s))}ds)(\int_{B_{o}(t)}%
f)|_{0}^{2r}\\
& &+  \int_{0}^{2r}\frac{t}{V(B_{o}(t))}(\int_{B_{o}(t)}f(y))dt]\\
& \leq & C_{5}[(\int_{2r}^{\infty}\frac{s}{V(B_{o}(s))}ds)(\int_{B_{o}%
(2r)}f)\\
&& +  \int_{0}^{2r}\frac{t}{V(B_{o}(t))}(\int_{B_{o}(t)}f(y))dt]\\
& \leq & C_{5}[C_{6}r^{2}k(2r)+\int_{0}^{2r}tk(t)dt]
\end{array}
\label{14}%
\end{equation}
for some positive constants $C_{5},C_{6}$ which depend on $n$ and $\sigma$.
From (\ref{12}), (\ref{13}) and (\ref{14}), we have
\begin{equation}
u_{R}(x)\leq C_{7}(rRk(R)+r^{2}k(2r)+r\int_{2r}^{R}k(t)dt)+\beta_{1}\int
_{0}^{2r}tk(t)dt\label{18}%
\end{equation}
for some positive constants $C_{7},\beta_{1}$ which depend on $n$ and $\sigma
$. As in the proof of (\ref{14}), using the lower bound of the Green's
function, we have
\begin{equation}
\int_{B_{o}(2r)}G(o,y)f(y)dy\geq C_{8}[C_{9}r^{2}k(2r)+\int_{0}^{2r}%
tk(t)dt]\label{17}%
\end{equation}
for some positive constants $C_{8},C_{9}$ which depend on $n$ and $\sigma$.
For $1>\varepsilon>0$,
\begin{equation}%
\begin{array}
[c]{ccl}
\int_{B_{x}(\varepsilon r)}G_{R}(x,y)f(y)dy & \leq & \int_{B_{x}(\varepsilon
r)}G(x,y)f(y)dy\\
& \leq & \beta_{2}[\int_{\varepsilon r}^{\infty}\frac{t}{V(B_{x}(t))}
dt(\int_{B_{x}(\varepsilon r)}fdy)\\
&& +  \int_{0}^{\varepsilon r}\frac{t}{V(B_{x}(t))}(\int_{B_{x}(t)}fdy)dt]\\
& \leq & C_{10}(\varepsilon r)^{2}k(x,\varepsilon r)+\beta_{2}\int_{0}^{\varepsilon r}tk(x,t)dt\\
& \leq & C_{11}r^{2}k((1+\varepsilon)r)+\beta_{2}\int_{0}^{\varepsilon
r}tk(x,t)dt
\end{array}
\label{15}
\end{equation}
for some positive constants $C_{10},\beta_{2}$ which depend on $n$ and
$\sigma$ and $C_{11}$ which depends on $n,\varepsilon$ and $\sigma$. Here we
used volume doubling property and the fact that $B_{x}(\varepsilon r)\subset
B_{o}((1+\varepsilon)r)$.
\begin{equation}
\begin{array}
[c]{ccl}
\int_{B_{o}(2r)\backslash B_{x}(\varepsilon r)}G_{R}(x,y)f(y)dy & \leq &
\int_{B_{o}(2r)\backslash B_{x}(\varepsilon r)}G(x,y)f(y)dy\\
& \leq & \sigma \frac{16r^{2}}{V_{x}(\varepsilon r)}\int_{B_{o}(2r)}f(y)dy\\
& \leq & C_{12}r^{2}k(2r)
\end{array}
\label{16}%
\end{equation}
for some positive constant $C_{12}$ which depends on $n,\varepsilon$ and
$\sigma$. From (\ref{12}), (\ref{13}), (\ref{17}), (\ref{15}) and (\ref{16}),
if $R\geq4r$, we have
\begin{equation}
\begin{array}
[c]{c}
u_{R}(x)\geq-C_{13}r(Rk(R)+\int_{2r}^{R}k(t)dt)-\beta_{2}\int_{0}^{\varepsilon
r}tk(x,t)dt+\beta_{3}\int_{0}^{2r}tk(t)dt
\end{array}
\label{19}
\end{equation}
for some positive constant $C_{13}$ which depend on $n,\varepsilon$ and
$\sigma$ and $\beta_{3}$ depends on $n$. Here we have used the fact that for
any $\alpha>1$, $k(\alpha R)\geq Ck(R)$ for some positive constant $C$ which
depends on $n,\alpha$ for all $R$. Since $\int_{0}^{\infty}k(t)dt<\infty$,
$\lim_{R\rightarrow \infty}Rk(R)=0$. Hence from (\ref{18}) and (\ref{19}),
$u_{R}(x)$ is bounded on compacts sets and there exists $R_{i}\rightarrow
\infty$ such that $u_{R_{i}}$ converges uniformly on compact sets to a
function $u$ which satisfies $\triangle_{b}u=f$. By (\ref{18}) and (\ref{19}),
let $R_{i}\rightarrow \infty$ we can conclude that $u$ satisfies the estimates
in the theorem.
\end{proof}

In order to prove \textbf{Theorem \ref{A1}}, we need one more lemma,

\begin{lemma}
\label{98} (\cite{nst}) Let $M=M_{1}\times M_{2}$, where $M_{1}$ and $M_{2}$
are complete noncompact manifolds with nonnegative Ricci curvature. Let
$f\geq0$ be a function on $M_{1}$ and be considered also as a function on $M$,
which is independent of the second variable. Let $x=(x_{1},x_{2})\in M$ and
$r>0$. Then
\begin{equation}
\frac{C^{-1}}{V(B_{x_{1}}^{(1)}(\frac{1}{\sqrt{2}}r))}\int_{B_{x_{1}}%
^{(1)}(\frac{1}{\sqrt{2}}r)}f\leq \frac{1}{V(B_{x}(r))}\int_{B_{x}(r)}%
f\leq \frac{C}{V(B_{x_{1}}^{(1)}(r))}\int_{B_{x_{1}}^{(1)}(r)}f
\end{equation}
for some constant $C>0$ depending only on the dimension of $M_{1}$ and $M_{2}%
$. Here $B_{x}(t)$, $B_{x_{1}}^{(1)}(t)$ are geodesic balls with radius $t$ in
$M,M_{1}$ with centers at $x,x_{1}$ respectively.
\end{lemma}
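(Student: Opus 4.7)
The plan is to exploit the product structure together with the Bishop--Gromov volume doubling that holds on each factor because both $M_1$ and $M_2$ have nonnegative Ricci curvature. The key geometric observation is the sandwich of Euclidean-type balls in the product metric: for $x=(x_1,x_2)$,
\begin{equation*}
B_{x_1}^{(1)}(r/\sqrt{2})\times B_{x_2}^{(2)}(r/\sqrt{2})\;\subseteq\; B_x(r)\;\subseteq\; B_{x_1}^{(1)}(r)\times B_{x_2}^{(2)}(r).
\end{equation*}
Multiplying dimensions, this immediately gives the two-sided volume estimate
\begin{equation*}
V\bigl(B_{x_1}^{(1)}(r/\sqrt{2})\bigr)\,V\bigl(B_{x_2}^{(2)}(r/\sqrt{2})\bigr)\;\leq\; V(B_x(r))\;\leq\; V\bigl(B_{x_1}^{(1)}(r)\bigr)\,V\bigl(B_{x_2}^{(2)}(r)\bigr).
\end{equation*}

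First I would integrate $f$ against the containment above. Because $f$ depends only on the first factor, Fubini's theorem turns each integral into a product, yielding the sandwich
\begin{equation*}
V\bigl(B_{x_2}^{(2)}(r/\sqrt{2})\bigr)\!\int_{B_{x_1}^{(1)}(r/\sqrt{2})}\!f\;\leq\;\int_{B_x(r)}f\;\leq\;V\bigl(B_{x_2}^{(2)}(r)\bigr)\!\int_{B_{x_1}^{(1)}(r)}\!f.
\end{equation*}
Dividing by $V(B_x(r))$ and using the two-sided volume sandwich, one gets both the desired upper and lower bounds modulo ratios of the form $V(B_{y}^{(i)}(r))/V(B_{y}^{(i)}(r/\sqrt{2}))$, which by Bishop--Gromov volume comparison on each factor are bounded by a constant depending only on $\dim M_1$ and $\dim M_2$.

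The only real subtlety to watch is that the volume doubling on the two factors must absorb cleanly into a single constant $C=C(\dim M_1,\dim M_2)$; since doubling from radius $r/\sqrt{2}$ to $r$ is universally controlled under $\operatorname{Ric}\geq 0$ on each factor, this is immediate. No other estimate is needed, so there is no genuine obstacle beyond carefully tracking the four constants produced by applying doubling in both factors for both inequalities and combining them into the single $C$ in the statement.
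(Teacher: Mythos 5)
Your argument is correct: the ball sandwich $B_{x_1}^{(1)}(r/\sqrt{2})\times B_{x_2}^{(2)}(r/\sqrt{2})\subseteq B_x(r)\subseteq B_{x_1}^{(1)}(r)\times B_{x_2}^{(2)}(r)$ follows from the product metric, Fubini factors out the second variable since $f\geq 0$ depends only on $x_1$, and Bishop--Gromov under $\mathrm{Ric}\geq 0$ bounds each ratio $V(B(r))/V(B(r/\sqrt{2}))$ by $2^{n_i/2}$, which yields both inequalities with a constant depending only on the dimensions. The paper itself gives no proof of this lemma --- it is quoted directly from the reference [NST] --- and your argument is precisely the standard one used there, so there is nothing to reconcile.
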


The proof of \textbf{Theorem \ref{A1} :}

\begin{proof}
Let $\widetilde{M}=M^{2n+1}\times \mathbb{R}^{4}$ with the standard Euclidean
space $\mathbb{R}^{4}$. Then $\widetilde{M}$ is nonparabolic. By volume
doubling property, we will prove that (\ref{11}) is true on $\widetilde{M}$.
\ It follows from \cite[Corollary 1.1.]{ccht} again that
\begin{equation}
V(B_{x}(t))=V(B_{x}(\frac{t}{r(x,y)}r(x,y)))\leq C_{v}(\frac{t}{r(x,y)}
)^{2C}V(B_{x}(r(x,y))),
\end{equation}
where $C_{v}$ is a positive constant which depends only on $n$, so we have
\[
V^{-1}(B_{x}(r(x,y)))C_{v}^{-1}(\frac{t}{r(x,y)})^{-2C}\leq V^{-1}(B_{x}(t)).
\]
That is
\begin{equation}
\frac{t}{V(B_{x}(r(x,y)))}C_{v}^{-1}(\frac{t}{r(x,y)})^{-2C}\leq \frac
{t}{V(B_{x}(t))}.\label{20}%
\end{equation}
On the other hand, we will have
\begin{equation}
\frac{t}{V(B_{x}(t))}\leq C_{v}(\frac{t}{r(x,y)})^{-2C}\frac{t}{V(B_{x}
(r(x,y)))}.\label{21}
\end{equation}
From (\ref{20}) and (\ref{21}), we have
\begin{equation}
\frac{t}{V(B_{x}(r(x,y)))}C_{v}^{-1}(\frac{t}{r(x,y)})^{-2C}\leq \frac
{t}{V(B_{x}(t))}\leq C_{v}(\frac{t}{r(x,y)})^{-2C}\frac{t}{V(B_{x}%
(r(x,y)))}\label{22}
\end{equation}
and then
\[
C_{1}^{-1}\frac{r^{2}(x,y)}{V(B_{x}(r(x,y)))}\leq \int_{r(x,y)}^{\infty}%
\frac{t}{V(B_{x}(t))}dt\leq C_{1}\frac{r^{2}(x,y)}{V(B_{x}(r(x,y)))},
\]
where $C_{1}$ is a positive constant depending only on $n$. From (\ref{9}), we
have
\begin{equation}
\sigma^{-1}\frac{r^{2}(x,y)}{V(B_{x}(r(x,y)))}\leq G(x,y)\leq \sigma \frac
{r^{2}(x,y)}{V(B_{x}(r(x,y)))},\label{23}%
\end{equation}
where $\sigma$ is a positive constant depending only on $n$. So we know that
$(\ref{10})$ is satisfied on $\widetilde{M}$. Then the rest proof is similar
to the proof as in \cite[Theorem 1.2.]{nst}, so we omit it here.
\end{proof}

\begin{corollary}
\label{c31} Let $(M,J,\theta)$ be a complete noncompact strictly pseudoconvex
CR $(2n+1)$-manifold with nonnegative pseudohermitian Ricci curvature tensors
and vanishing torsion. Let $f$ be a nonnegative locally H\"{o}lder continuous
function. Assume that $u$ is a solution of the CR Poisson equation
\[
\triangle_{b}u=f.
\]
If $f_{0}=0$, then
\[
u_{0}=0.
\]
\end{corollary}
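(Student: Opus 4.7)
The plan rests on two ingredients: the commutation relation $[\triangle_b, \mathbf{T}] = 0$ valid on Sasakian manifolds, and then a Liouville-type (or heat-kernel) argument applied to the $\triangle_b$-harmonic function $u_0 = \mathbf{T}u$.

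\emph{Step 1 (commutation).} On a pseudohermitian $(2n+1)$-manifold with vanishing torsion the Reeb field $\mathbf{T}$ is an infinitesimal pseudohermitian isometry, so commutes with $\triangle_b$ acting on functions. Concretely, the CR commutation identities of Lee \cite{l1} express $(\triangle_b v)_0 - \triangle_b(v_0)$ as a sum of terms involving $A_{\alpha\beta}$ and its covariant derivatives, all of which vanish under our hypothesis. Applying this to $\triangle_b u = f$ yields
\[ \triangle_b(u_0) = (\triangle_b u)_0 = f_0 = 0, \]
so $u_0$ is $\triangle_b$-harmonic on $M$.

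\emph{Step 2 (vanishing of $u_0$).} I would exploit the heat-kernel representation of the specific solution produced by Theorem \ref{A1}. Since $\mathbf{T}$ generates a one-parameter group of pseudohermitian isometries, the heat kernel of $\triangle_b$ satisfies $\mathbf{T}_x H(x,y,t) + \mathbf{T}_y H(x,y,t) \equiv 0$, and the same symmetry is inherited by $G(x,y) = \int_0^\infty H(x,y,t)\,dt$. Writing $u(x) = \lim_{R\to\infty}\int_{B_o(R)} (G_R(o,y) - G_R(x,y))\,f(y)\,dy$, differentiating in $\mathbf{T}_x$, and integrating by parts in $y$ (using that $\mathbf{T}$ is divergence-free because it is Killing) formally gives
\[ \mathbf{T}u(x) = -\int_M \mathbf{T}_y G(x,y)\,f(y)\,dy = \int_M G(x,y)\,\mathbf{T}f(y)\,dy = \int_M G(x,y)\,f_0(y)\,dy = 0. \]

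The main obstacle is that the exhausting Dirichlet balls $B_o(R)$ are not preserved by the Reeb flow $\phi_s^{\mathbf{T}}$ (the basepoint $o$ is not $\mathbf{T}$-fixed in general), so $u_R$ itself is not $\mathbf{T}$-invariant and the formal computation must be justified by a careful analysis of the boundary contribution as $R\to\infty$. An attractive alternative that avoids this issue is the Liouville route: Step 1 already supplies that $u_0$ is $\triangle_b$-harmonic; the subgradient estimate of \cite{ckt} applied to $u$ then transfers the sublinear growth bound on $u$ from Theorem \ref{A1} to $u_0 = \mathbf{T}u$, and the CR Liouville property (Lemma \ref{T61}) for Sasakian manifolds with nonnegative pseudohermitian Ricci forces $u_0 \equiv 0$.
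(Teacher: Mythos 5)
Your Step 1 is exactly the paper's first step: the vanishing of torsion gives $\left[\Delta_{b},\mathbf{T}\right]u=0$ (cited there to \cite{ckl}), hence $\triangle_{b}u_{0}=f_{0}$. For the second step the paper simply asserts that $u_{0}$ satisfies the two-sided estimates of Theorem \ref{A1} with $k$ replaced by $\widetilde{k}(x,t)=\frac{1}{V_{x}(t)}\int_{B_{x}(t)}f_{0}\,d\mu$, which vanishes identically when $f_{0}=0$; in other words, the paper implicitly treats $u_{0}$ as the canonical Theorem \ref{A1} solution for the data $f_{0}$. That is precisely the content of your route (a), and the boundary/exhaustion issue you flag is exactly the point the paper leaves unjustified. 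So route (a) is essentially the paper's argument, stated more honestly.

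Your fallback route (b), however, has a genuine gap. The subgradient $\nabla_{b}u=\sum_{\alpha}(u_{\bar{\alpha}}Z_{\alpha}+u_{\alpha}Z_{\bar{\alpha}})$ contains only the horizontal derivatives, so a bound on $|\nabla_{b}u|$ (Theorem \ref{49}(i)) says nothing about $u_{0}=\mathbf{T}u$; indeed the commutation relation $\varphi_{\alpha\bar{\beta}}-\varphi_{\bar{\beta}\alpha}=ih_{\alpha\bar{\beta}}\varphi_{0}$ shows $u_{0}=\frac{1}{in}\sum_{\alpha}(u_{\alpha\bar{\alpha}}-u_{\bar{\alpha}\alpha})$ is a second-order horizontal quantity, controlled by $(\nabla^{H})^{2}u$ rather than $\nabla_{b}u$ --- and the paper's $L^{2}$ bound on $(\nabla^{H})^{2}u$ (Theorem \ref{49}(iii)) already presupposes $u_{0}=0$, so invoking it would be circular. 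Moreover, even granting a growth bound on $u_{0}$, a Yau-type Liouville theorem for the pseudoharmonic function $u_{0}$ would only yield that $u_{0}$ is constant, and you would still need a separate argument to identify that constant as zero; Lemma \ref{T61} is not the right tool here, since it concerns CR plurisubharmonic functions already assumed to satisfy $u_{0}=0$. If you want a complete proof, you should carry out route (a): establish the $\mathbf{T}$-equivariance of the heat kernel and hence of $G$, and control the boundary terms in the exhaustion, which is the step both you and the paper leave open.
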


\begin{proof}
It follows from \cite{ckl} that $\left[  \Delta_{b},\text{ }\mathbf{T}\right]
u=0$ if the torsion is vanishing that
\[
\triangle_{b}u_{0}=f_{0}.
\]
Then from Theorem \ref{A1}
\[
\begin{array}
[c]{ccl}
\alpha_{1}r\int_{2r}^{\infty}\widetilde{k}(t)dt+\beta_{1}\int_{0}
^{2r}t\widetilde{k}(t)dt &   \geq &u(x)\\
&   \geq  & -\alpha_{2}r\int_{2r}^{\infty}\widetilde{k}(t)dt-\beta_{2}\int
_{0}^{\varepsilon r}t\widetilde{k(}x,t)dt+\beta_{3}\int_{0}^{2r}t\widetilde
{k}(t)dt,
\end{array}
\]
where $\widetilde{k}(x,t)=\frac{1}{V_{x}(t)}\int_{B_{x}(t)}f_{0}d\mu$, and
$\widetilde{k}(t)=\widetilde{k}(o,t)$. If $f_{0}=0$, we can obtain that
$u_{0}=0$ as well.
\end{proof}

Observe that if $\int_{0}^{\infty}k(o,t)dt<\infty,$ then $\int_{0}^{\infty
}k(x,t)dt<\infty$ for all $x$. If $u$ is the solution obtained as in Theorem
\ref{A1}, then for any $x_{0}\in M$
\[
u(x)-u(x_{0})=\int_{M}(G(x_{0},y)-G(x,y))f(y)dy.
\]
From the proof of the theorem, it is easy to see that
\begin{equation}
\begin{array}
[c]{ccl}
\alpha_{1}r\int_{2r}^{\infty}k(x_{0},t)dt+\beta_{1}\int_{0}^{2r}
tk(x_{0},t)dt
&   \geq & u(x)-u(x_{0})\\
&  \geq &-\alpha_{2}r\int_{2r}^{\infty}k(x_{0},t)dt-\beta_{2}\int
_{0}^{\varepsilon r}tk(x,t)dt+\beta_{3}\int_{0}^{2r}tk(x_{0},t)dt,
\end{array}
\label{39}
\end{equation}
where $\alpha_{i}$ and $\beta_{j}$ are the constants in Theorem \ref{A1} and
$r=r(x,x_{0})$.

From (\ref{39}), we can obtain the following results.

\begin{theorem}
\label{49} The same assumptions and notation as in Theorem \ref{A1} or in
Theorem \ref{91}. Let $u$ be the solution of
\[
\triangle_{b}u=f
\]
obtained as in Theorem \ref{A1} or in Theorem \ref{91}. Then

(i)
\[
|\nabla_{b}u(x)|\leq C(n,\sigma)\int_{0}^{\infty}k(x,t)dt,
\]
where $C$ is a positive constant depending only on $n$.

(ii) For any $p\geq1$ and $\alpha \geq2$, if $u$ is a solution obtained in
Theorem \ref{91}, then
\[
\frac{1}{V_{o}(R)}\int_{B_{o}(R)}|\nabla_{b}u|^{p}dx\leq C(n,p)\Big (\int_{\alpha
R}^{\infty}k(t)dt\Big )^{p}+\frac{C(n,p,\alpha)R^{p}}{V(B_{o}(R))}\int
_{B_{o}(\alpha R)}f^{p}dx
\]
and if $u$ is a solution obtained in Theorem \ref{A1}, then
\[
\frac{1}{V_{o}(\frac{1}{\sqrt{2}}R)}\int_{B_{o}(\frac{1}{\sqrt{2}}R)}
|\nabla_{b}u|^{p}dx\leq C(n,p)\Big (\int_{\alpha R}^{\infty}k(t)dt\Big )^{p}
+\frac{C(n,p,\alpha)R^{p}}{V(B_{o}(R))}\int_{B_{o}(\alpha R)}f^{p}dx.
\]

(iii) If $f_{0}=0$ and $u$ is a solution obtained in Theorem \ref{91}, then
\[
\frac{1}{V_{o}(R)}\int_{B_{o}(R)}|(\nabla^{H})^{2}u|^{2}\leq C(n)\Big [R^{-2}
\big(\int_{4R}k(t)dt\big )^{2}+\frac{1}{V_{o}(4R)}\int_{B_{o}(4R)}f^{2}\Big  ]
\]
and if $u$ is a solution obtained in Theorem \ref{A1}, then
\[
\frac{1}{V_{o}(\frac{1}{\sqrt{2}}R)}\int_{B_{o}(\frac{1}{\sqrt{2}}R)}
|(\nabla^{H})^{2}u|^{2}\leq C(n,\sigma)\Big [R^{-2}\big (\int_{4R}k(t)dt\big )^{2}+\frac
{1}{V_{o}(4R)}\int_{B_{o}(4R)}f^{2}\Big ].
\]
\end{theorem}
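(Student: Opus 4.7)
My plan is to prove the three parts in sequence, sharing a common backbone: the Green's function representation $u(x)-u(x_{0})=\int_{M}\big(G(x_{0},y)-G(x,y)\big)f(y)\,d\mu(y)$ provided by Theorem~\ref{A1}/\ref{91}, the CR subgradient (Harnack-type) estimate of \cite{ckt}, and, for part (iii), the CR Bochner formula. The case in which $u$ arises from Theorem~\ref{A1} reduces to the case treated in Theorem~\ref{91} by lifting to $\widetilde{M}=M\times\mathbb{R}^{4}$, where the two-sided bound (\ref{23}) was already established in the proof of Theorem~\ref{A1}, and then transferring ball-averages back to $M$ via Lemma~\ref{98}; the factor $\tfrac{1}{\sqrt{2}}R$ appearing in the statements is exactly the constant produced by that lemma. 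In what follows I describe the argument on $M$ itself and flag the lift only where needed.

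For (i), I differentiate the representation formula in $x$ to obtain $\nabla_{b}u(x)=-\int_{M}\nabla_{b,x}G(x,y)f(y)\,d\mu(y)$. Applying the CR subgradient estimate to the nonnegative pseudo-harmonic function $G(\cdot,y)$ on $B_{x}(\tfrac{1}{2}r(x,y))$ gives $|\nabla_{b,x}G(x,y)|\le C\,G(x,y)/r(x,y)$, and combined with the upper bound (\ref{10}) this yields $|\nabla_{b,x}G(x,y)|\le C\,r(x,y)/V(B_{x}(r(x,y)))$. Inserting this into the representation, rewriting the spatial integral in Carnot-Carath\'{e}odory polar coordinates centred at $x$, and performing one integration by parts---exactly as in the estimate of term $II$ in the proof of Theorem~\ref{91}---produces $|\nabla_{b}u(x)|\le C\int_{0}^{\infty}k(x,t)\,dt$.

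For (ii), the plan is to split $f=f_{1}+f_{2}$ with $f_{1}=f\,\chi_{B_{o}(\alpha R)}$ and correspondingly $u=u_{1}+u_{2}$. For the far part: if $x\in B_{o}(R)$ and $y\in\mathrm{supp}\,f_{2}$ then $r(x,y)\ge(\alpha-1)R$, so the pointwise bound from (i), restricted to the tail of the integral and combined with CR volume doubling to exchange $k(x,t)$ for $k(t)$, gives $|\nabla_{b}u_{2}(x)|\le C\int_{\alpha R}^{\infty}k(t)\,dt$ uniformly in $x\in B_{o}(R)$; taking $L^{p}$-means supplies the first term on the right. For the local part: test $|\nabla_{b}u_{1}|^{p}$ against a cutoff $\phi$ with $\phi\equiv 1$ on $B_{o}(R)$ and $\mathrm{supp}\,\phi\subset B_{o}(\alpha R)$, integrate by parts once to expose $f_{1}=\Delta_{b}u_{1}$ and $\nabla_{b}\phi$, and absorb the resulting gradient term by Young's inequality; this yields the Caccioppoli-type bound $\int\phi^{p}|\nabla_{b}u_{1}|^{p}\le CR^{p}\int|f_{1}|^{p}$, which after dividing by $V(B_{o}(R))$ gives the second term. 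The bookkeeping of $V(B_{o}(R))$ versus $V(B_{o}(\alpha R))$ is handled by CR volume doubling from \cite{ccht}.

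For (iii), the plan is to invoke the CR Bochner formula for $|\nabla_{b}u|^{2}$. Under vanishing torsion all pseudohermitian torsion contributions drop, and since $f_{0}=0$ forces $u_{0}=0$ by Corollary~\ref{c31}, the Reeb-direction term proportional to $u_{0}$ also vanishes; what remains is
\[
\tfrac{1}{2}\Delta_{b}|\nabla_{b}u|^{2}\ \ge\ |(\nabla^{H})^{2}u|^{2}+\langle\nabla_{b}u,\nabla_{b}f\rangle.
\]
Testing against $\phi^{2}$ with $\phi\equiv 1$ on $B_{o}(R)$, $\mathrm{supp}\,\phi\subset B_{o}(2R)$, and $|\nabla_{b}\phi|\le C/R$, integrating by parts on both the Laplacian term and the cross term (where $\Delta_{b}u=f$ is used), and applying Young's inequality leads to $\int_{B_{o}(R)}|(\nabla^{H})^{2}u|^{2}\le C\big(R^{-2}\int_{B_{o}(2R)}|\nabla_{b}u|^{2}+\int_{B_{o}(2R)}f^{2}\big)$. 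Feeding this into part (ii) with $p=2$ and $\alpha=2$, and using volume doubling to compare $V_{o}(R)$, $V_{o}(2R)$ and $V_{o}(4R)$, produces the stated bound. The main obstacle I anticipate lies in (iii): one must verify carefully which terms in the CR Bochner formula survive under vanishing torsion and $u_{0}=0$, and check that the boundary contributions from the two integrations by parts are indeed absorbed by the cutoff; a secondary subtlety is the justification of the subgradient estimate for $G(\cdot,y)$ off the singularity in part (i), which is dealt with exactly as in the proof of Theorem~\ref{91} by restricting to $B_{x}(\tfrac{1}{2}r(x,y))$ and invoking Harnack.
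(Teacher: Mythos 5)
Your parts (i) and (iii), and your treatment of the Theorem \ref{A1} case by lifting to $\widetilde{M}=M\times\mathbb{R}^{4}$ and transferring averages via Lemma \ref{98}, follow essentially the paper's route: (i) comes from $|\nabla_{b,x}G(x,y)|\le C r^{-1}(x,y)G(x,y)\le C r(x,y)/V(B_{x}(r(x,y)))$ plus one integration by parts in polar coordinates, and (iii) is exactly the CR Bochner formula with the $\langle J\nabla_{b}u,\nabla_{b}u_{0}\rangle$ term killed by $u_{0}=0$ (Corollary \ref{c31}), a cutoff, Young's inequality, and part (ii) with $p=2$.

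The genuine gap is in the local term of part (ii). You claim that testing $|\nabla_{b}u_{1}|^{p}$ against a cutoff, integrating by parts once to expose $f_{1}=\Delta_{b}u_{1}$, and applying Young's inequality yields $\int\phi^{p}|\nabla_{b}u_{1}|^{p}\le CR^{p}\int|f_{1}|^{p}$. This does not work. Even for $p=2$, the integration by parts gives
\[
\int\phi^{2}|\nabla_{b}u_{1}|^{2}\le 2\int\phi^{2}|u_{1}||f_{1}|+4\int|\nabla_{b}\phi|^{2}u_{1}^{2},
\]
which leaves the potential $u_{1}$ itself on the right; controlling $\int_{B_{o}(\alpha R)}u_{1}^{2}$ by $R^{4}\int f_{1}^{2}$ is precisely what is not available without further argument. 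For general $p\ge 1$ (in particular $p=1$) the integration-by-parts device does not even produce $|\nabla_{b}u_{1}|^{p}$ on the left, so the Caccioppoli route cannot give an $L^{p}$ gradient bound in terms of $\|f\|_{L^{p}}$ alone. The paper proves this term by a completely different mechanism: starting from $|\nabla_{b}u(x)|\le C\int_{B_{o}(\alpha R)}r^{-1}(x,y)G(x,y)f(y)\,dy+\dots$, it applies H\"older's inequality \emph{with respect to the measure} $r^{-1}(x,y)G(x,y)\,dy$, uses the key kernel bound (\ref{94}), namely $\int_{B_{z}(\rho)}\frac{r(z,y)}{V_{z}(r(z,y))}\,dy\le C\rho$ together with (\ref{10}), to bound $\int_{B_{o}(\alpha R)}r^{-1}(x,y)G(x,y)\,dy\le C R$, and then exchanges the order of integration (Fubini) to land on $C R^{p}\int_{B_{o}(\alpha R)}f^{p}$; see (\ref{96}). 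You need this Schur-test--type argument (or an equivalent $L^{p}$ bound for the gradient of the Green potential); the energy method you propose does not supply it.
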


\begin{proof}
Let us first consider the solution $u$ obtained in Theorem \ref{91}.

(i). From (\ref{39}), we can know that (i) is true.

(ii) From Theorem \ref{91}, for any $x\in B_{o}(R)$ and (\ref{10}) and the
subgradient estimate \cite{ckt}, we have
\begin{equation}%
\begin{array}
[c]{ccl}
|\nabla_{b}u| & \leq & \int_{M}|\nabla_{b,x}G(x,y)|f(y)dy\\
& \leq & C_{1}(n)\int_{M}r^{-1}(x,y)G(x,y)f(y)dy\\
& \leq & C_{2}(n,\sigma)\int_{M\backslash B_{o}(\alpha R)}\frac{r(x,y)}
{V(B_{x}(r(x,y)))}f(y)dy\\
&& +  C_{1}(n)\int_{B_{x}(\alpha R)}r^{-1}(x,y)G(x,y)f(y)dy
\end{array}
\label{92}
\end{equation}
for some positive constants $C_{1}(n)$ and $C_{2}(n,\sigma)$.
\begin{equation}
\begin{array}
[c]{ccl}
&  & \int_{M\backslash B_{o}(\alpha R)}\frac{r(x,y)}{V(B_{x}(r(x,y)))}f(y)dy\\
&  & \leq C_{3}(n)\int_{M\backslash B_{o}(\alpha R)}\frac{r(y)}{V(B_{o}
(r(y)))}f(y)dy\\
&  & =\int_{\alpha R}^{\infty}\frac{t}{V_{o}(t)}(\int_{\partial B_{o}
(r)}f)dt\\
&  & =C_{3}(n)\Big [\frac{t}{V_{o}(t)}\int_{B_{o}(t)}f|_{\alpha R}^{\infty}
-\int_{\alpha R}^{\infty}(\int_{B_{o}(t)}f)\frac{d}{dt}(\frac{t}{V_{o} (t)})dt\Big ]\\
&  & \leq C_{3}(n)\Big [-\int_{\alpha R}^{\infty}(\frac{1}{V_{o}(t)}-\frac
{tA_{o}(t)}{V_{o}(t)})(\int_{B_{o}(t)}f)dt\Big ]\\
&  & \leq C_{4}(n)\int_{\alpha R}^{\infty}\frac{1}{V_{o}(t)}(\int_{B_{o}
(t)}f)dt=C_{4}(n)\int_{\alpha R}^{\infty}k(t)dt
\end{array}
\label{93}
\end{equation}
for some constants $C_{3}(n),C_{4}(n)$, where we have used the fact that
$\alpha \geq2$, $tk(t)\rightarrow0$ as $t\rightarrow \infty$ and \cite[Corollary
1.1.]{ccht} $tA_{o}(t)\leq C_{5}(n)(n+3)V_{o}(t)$, where $C_{5}(n)$ is a
positive constant. Note that for any $z\in M$ and for $\rho>0$, we have
\begin{equation}
\int_{B_{z}(\rho)}\frac{r(z,y)}{V_{z}(r(z,y))}dy=\int_{0}^{\rho}\frac
{tA_{z}(t)}{V_{z}(t)}dt\leq C_{5}(n)(n+3)\rho.\label{94}
\end{equation}
Let us first assume $p>1$ and let $q=\frac{p}{p-1}$. From (\ref{92}) and
(\ref{93}), we have
\begin{equation}
\begin{array}
[c]{ccl}
\int_{B_{o}(R)}|\nabla_{b}u|^{p}&  \leq & C_{6}(n,\sigma,p)V_{o}
(R)(\int_{\alpha R}^{\infty}k(t)dt)^{p}\\
&  & +C_{7}(n,p)\int_{B_{o}(R)}(\int_{B_{o}{(\alpha R)}}r^{-1}
(x,y)G(x,y)f(y)dy)^{p}dx
\end{array}
\label{95}
\end{equation}
for some constants $C_{6}(n),C_{7}(n)$. From Theorem \ref{91}, for any $x\in
B_{o}(R)$ and (\ref{10}) and (\ref{94}), we have
\begin{equation}
\begin{array}
[c]{l}
\int_{B_{o}(R)}(\int_{B_{\alpha R}}r^{-1}(x,y)G(x,y)f(y)dy)^{p}dx\\
\leq q\int_{B_{o}(R)}(\int_{B_{o}{(\alpha R)}}r^{-1}(x,y)G(x,y)dy)^{\frac
{p}{q}}(\ int_{B_{o}{(\alpha R)}}r^{-1}(x,y)G(x,y)f^{p}(y)dy)dx\\
\leq \int_{B_{o}(R)}(\int_{B_{o}{(\alpha R)}}\frac{\sigma r(x,y)}%
{V_{x}(r(x,y))}dy)^{\frac{p}{q}}(\int_{B_{o}{(\alpha R)}}r^{-1}%
(x,y)G(x,y)f^{p}(y)dy)dx\\
\leq C_{8}(n,\sigma,p,\alpha)R^{\frac{p}{q}}\int_{B_{o}(R)}(\int
_{B_{o}{(\alpha R)}}r^{-1}(x,y)G(x,y)f^{p}(y)dy)dx\\
=C_{8}(n,\sigma,p,\alpha)R^{\frac{p}{q}}\int_{B_{o}(\alpha R)}(\int
_{B_{o}{(R)}}r^{-1}(x,y)G(x,y)dx)f^{p}(y)dy\\
\leq C_{9}(n,\sigma,p,\alpha)R^{p}\int_{B_{0}(\alpha R)}f^{p}(y)dy
\end{array}
\label{96}%
\end{equation}
for some constants $C_{8}(n,\sigma,p,\alpha)$ and $C_{9}(n,\sigma,p,\alpha)$.
Combine this with (\ref{95}), (ii) follows if $p>1$. The case $p=1$ can be
proved similarly.

(iii) By the assumption $f_{0}=0$, Corollary \ref{c31} and (\ref{96}), we have
$u_{0}=0$ and
\begin{equation}
\label{97}%
\begin{array}
[c]{ccl}%
\frac{1}{2}\triangle_{b}|\nabla_{b}u|^{2} & = & |(\nabla^{H})^{2}%
u|^{2}+\langle \nabla_{u},\nabla_{b}\triangle_{b}u\rangle+2Ric((\nabla
_{b}u)_{C},(\nabla_{b}u)_{C})\\
& \geq & |(\nabla^{H})^{2}u|^{2}+\langle \nabla_{b}u,\nabla_{b}\triangle
_{b}u\rangle=|(\nabla^{H})^{2}u|^{2}+\langle \nabla_{b}u,\nabla_{b}f\rangle,
\end{array}
\end{equation}
where we have used the $Ric\geq0$. Let $\eta \geq0$ be a smooth function with
compact in $B_{o}(2R)$. Multiplying (\ref{97}) by $\eta^{2}$ and integrating
by parts, we have
\[%
\begin{array}
[c]{ccl}
&  & \int_{B_{o}(2R)}\eta^{2}|(\nabla^{H})^{2}u|^{2}\leq \int_{B_{o}(2R)}%
[\frac{1}{2}\eta^{2}\triangle_{b}|\nabla_{b}u|^{2}-\eta^{2}\langle \nabla
_{b}u,\nabla_{b}f\rangle]\\
&  & \leq \int_{B_{o}(2R)}\eta^{2}f^{2}+\int_{B_{o}(2R)}\eta|\nabla_{b}%
\eta||\nabla_{b}u||f|+2\int_{B_{o}(2R)}\eta|\nabla_{b}\eta||\nabla_{b}%
|\nabla_{b}u|^{2}|\\
&  & \leq C[\int_{B_{o}(2R)}\eta^{2}f^{2}+\int_{B_{o}(2R)}|\nabla_{b}\eta
|^{2}|\nabla_{b}u|^{2}+\int_{B_{o}(2R)}\eta|\nabla_{b}\eta||\nabla
_{b}u||(\nabla^{H})^{2}u|]\\
&  & \leq C[\int_{B_{o}(2R)}\eta^{2}f^{2}+(1+\frac{1}{\varepsilon})\int
_{B_{o}(2R)}|\nabla_{b}\eta|^{2}|\nabla_{b}u|^{2}+\varepsilon \int_{B_{o}%
(2R)}\eta^{2}|(\nabla^{H})^{2}u|^{2}]
\end{array}
\]
for any $\varepsilon>0$, some absolute constant $C$. Choose $\varepsilon
=\frac{1}{2C}$, we have
\[
\int_{B_{o}(R)}|(\nabla^{H})^{2}u|^{2} \leq \widetilde{C}[\int_{B_{o}(2R)}%
f^{2}+R^{-2}\int_{B_{o}(2R)}|\nabla_{b}u|^{2}]
\]
for all $\eta \geq0$ with compact support in $B_{o}(R)$. Choose a suitable
$\eta$, we have
\[
\int_{B_{o}(2R)}\eta^{2}|(\nabla^{H})^{2}u|^{2} \leq2C[\int_{B_{o}(2R)}%
\eta^{2}f^{2}+\int_{B_{o}(2R)}|\nabla_{b}\eta|^{2}|\nabla_{b}u|^{2}]
\]
for some absolute constant $\widetilde{C}$. Combining this with (ii) the
results follows.

Suppose the assumption of Theorem \ref{A1} are satisfied. By the proof Theorem
\ref{A1}, Lemma \ref{98} and the above proof, we can obtain the other results
are also true.
\end{proof}

\section{The CR Heat Equation}

In this section, we will derive some basic results for solutions to the CR
heat equation on complete noncompact pseudohermitian $(2n+1)$-manifolds. We
refer to \cite{cf}, \cite{chl}, \cite{ccht}, \cite{bbg} for some basic properties.

We first recall a basic theorem for solutions to the CR heat equation from
\cite{chl} and \cite{nt1}.

\begin{theorem}
\label{34} Let $(M,J,\theta)$ be a complete noncompact strictly pseudoconvex
CR $(2n+1)$-manifold with nonnegative pseudohermitian Ricci curvature tensors
and vanishing torsion. Let $u$ be a continuous function on $M$ such that
\[
\frac{1}{V(B_{o}(r))}\int_{B_{o}(r)}|u|(x)dx\, \leq\,  \exp\big (ar^{2}+b\big )
\]
for some positive constant $a>0$ and $b>0$. Then the following initial value
problem
\begin{equation}
\left \{
\begin{array}
[c]{l}
(\frac{\partial}{\partial t}-\triangle_{b})v(x,t)=0,\\
v(x,0)=u(x).
\end{array}
\right.  \label{32}
\end{equation}
has a solution on $M\times(0,\frac{C}{16a}]$. Moreover,
\[
v(x,t)=\int_{M}H(x,y,t)u(y)dy,
\]
where $H(x,y,t)$ is the CR heat kernel of $M$.
\end{theorem}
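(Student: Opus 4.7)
The plan is to define the candidate solution by the usual heat representation
\[
v(x,t)\;:=\;\int_{M} H(x,y,t)\,u(y)\,dy,
\]
and then verify three things in order: (1) the integral converges absolutely and uniformly on compacta for $t\in(0,C/(16a)]$, (2) $v$ satisfies the CR heat equation there, and (3) $v(\cdot,t)\to u$ as $t\downarrow 0$.

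For step (1), which I expect to be the main technical obstacle, I would apply the Gaussian upper bound for the CR heat kernel proved in \cite{ccht} (already invoked earlier in the paper as \cite[Proposition 3.1]{ccht}),
\[
H(x,y,t)\;\leq\;C_{3}\,V_{x}(\sqrt{t})^{-1/2}V_{y}(\sqrt{t})^{-1/2}\exp\!\Bigl(-C_{4}\frac{r^{2}(x,y)}{t}\Bigr),
\]
combined with CR volume doubling (again \cite[Corollary 1.1]{ccht}) to replace $V_{y}(\sqrt{t})$ by $V_{x}(\sqrt{t})$ up to a polynomial factor $(1+r(x,y)/\sqrt{t})^{2C_{6}}$. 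Fix $x\in M$ and decompose $M$ into dyadic annuli $A_{k}=B_{x}(2^{k+1}\sqrt{t})\setminus B_{x}(2^{k}\sqrt{t})$. On $A_{k}$ the heat kernel is bounded by
\[
C\,V_{x}(\sqrt{t})^{-1}\,(1+2^{k})^{2C_{6}}\exp(-C_{4}\,4^{k}),
\]
while the growth hypothesis together with $B_{x}(2^{k+1}\sqrt{t})\subset B_{o}(r(x)+2^{k+1}\sqrt{t})$ and volume doubling give
\[
\int_{A_{k}}|u|\;\leq\;V\bigl(B_{o}(r(x)+2^{k+1}\sqrt{t})\bigr)\exp\bigl(a(r(x)+2^{k+1}\sqrt{t})^{2}+b\bigr).
\]
After applying volume doubling to compare $V(B_{o}(r(x)+2^{k+1}\sqrt t))$ with $V_{x}(\sqrt t)$, the $k$-th term of the series is controlled by
\[
C(x)\,(1+2^{k})^{N}\exp\!\bigl(-C_{4}\,4^{k}+4a\cdot 4^{k}t+\text{lower order in }k\bigr),
\]
so the series converges whenever $16at<C_{4}$, i.e.\ for $t\leq C/(16a)$ with $C$ a dimensional constant. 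This is exactly the range stated.

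For step (2), on $(0,C/(16a)]$ the convergence just obtained is locally uniform, and on compact subsets of $M\times(\varepsilon,C/(16a)]$ the same estimate applied to $\partial_{t}H$ and $\Delta_{b,x}H$ (using the classical parabolic gradient/time-derivative estimates for the heat kernel which give matching Gaussian bounds) justifies differentiation under the integral; since $(\partial_{t}-\Delta_{b})H(x,y,t)=0$ pointwise in $(x,t)$ for each fixed $y$, we get $(\partial_{t}-\Delta_{b})v=0$.

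For step (3), use the standard approximation-of-identity property of $H$: for fixed $x_{0}$ pick a small ball $B_{x_{0}}(\rho)$ on which $|u-u(x_{0})|<\varepsilon$ by continuity, split
\[
v(x_{0},t)-u(x_{0})\;=\;\int_{B_{x_{0}}(\rho)}H(x_{0},y,t)\bigl(u(y)-u(x_{0})\bigr)dy\;+\;\int_{M\setminus B_{x_{0}}(\rho)}H(x_{0},y,t)\bigl(u(y)-u(x_{0})\bigr)dy,
\]
and use $\int_{M}H(x_{0},y,t)\,dy=1$ for the first piece, while for the second the Gaussian factor $\exp(-C_{4}\rho^{2}/t)$ kills the $u$-growth exactly as in step (1), sending the tail integral to $0$ as $t\downarrow 0$. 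This gives $v(\cdot,t)\to u$ pointwise (in fact locally uniformly), completing the proof.
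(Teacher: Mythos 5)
The paper does not actually prove Theorem \ref{34}; it recalls it from \cite{chl} and \cite{nt1}, and your argument is precisely the standard one used in those sources (heat-kernel representation, Gaussian upper bound plus volume doubling over dyadic annuli to get convergence for $16at<C_{4}$, differentiation under the integral, approximation of the identity), so the approach matches and the estimates are correct. The only caveat is that two ingredients you invoke as ``classical'' need their CR/subelliptic versions cited explicitly: the Gaussian bounds for $\partial_{t}H$ and $\Delta_{b,x}H$ (available from the curvature-dimension framework of \cite{bbg} and the gradient estimates of \cite{ckt} under vanishing torsion and nonnegative pseudohermitian Ricci) and the stochastic completeness identity $\int_{M}H(x_{0},y,t)\,dy=1$ (which holds here by \cite{m} via the volume growth forced by nonnegative Ricci), neither of which is automatic in the sub-Riemannian setting.
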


For later proposes, we need more basic results from Theorem \ref{34} and \cite{nt2}.

\begin{lemma}
\label{35} Let $(M,J,\theta)$ be a complete noncompact strictly pseudoconvex
CR $(2n+1)$-manifold with nonnegative pseudohermitian Ricci curvature tensors
and vanishing torsion. Let $u,v$ be as in Theorem \ref{34}. Then for any
$0<\varepsilon<1$, there exists a constant $C=C(n,\varepsilon,a,b)$ and
$0<T_{0}<\frac{C(\varepsilon)}{16a}$ such that for all $(x,t)\in
M\times(0,T_{0}]$ with $r^{2}(x)\geq T_{0}$,
\[
|v(x,t)-\int_{B_{x}(\varepsilon r)}H(x,y,t)u(y)|\leq C(n,\varepsilon,a,b),
\]
where $r=r(x)$.
\end{lemma}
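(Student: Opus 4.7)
The plan is to estimate the tail integral
\[
\Bigl| v(x,t) - \int_{B_x(\varepsilon r)} H(x,y,t)\, u(y) \, dy \Bigr| \leq \int_{M\setminus B_x(\varepsilon r)} H(x,y,t)\, |u(y)|\, dy
\]
by combining the CR Gaussian upper bound for $H$ established in Section 2 with the growth hypothesis on $u$ and CR volume doubling. First I would decompose $M\setminus B_x(\varepsilon r) = \bigcup_{k\geq 0} A_k$ dyadically, with $A_k = B_x(2^{k+1}\varepsilon r)\setminus B_x(2^k\varepsilon r)$; then $r(x,y)\geq 2^k\varepsilon r$ on $A_k$, and the bounds \eqref{3}, \eqref{4} give
\[
H(x,y,t) \leq C_9\, V_x^{-1}(\sqrt{t})\, \exp\bigl(-c\cdot 4^k\varepsilon^2 r^2/t\bigr),
\]
for absolute constants $c, C_9>0$ depending only on $n$.

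Next, since $A_k\subset B_o((1+2^{k+1}\varepsilon)r)$, the hypothesis on $u$ yields
\[
\int_{A_k}|u(y)|\,dy \leq V\bigl(B_o((1+2^{k+1}\varepsilon)r)\bigr)\,\exp\bigl(a(1+2^{k+1}\varepsilon)^2 r^2+b\bigr),
\]
and the CR volume doubling of \cite[Corollary 1.1]{ccht}, together with $B_o((1+2^{k+1}\varepsilon)r)\subset B_x((2+2^{k+1}\varepsilon)r)$, produces a comparison
\[
\frac{V(B_o((1+2^{k+1}\varepsilon)r))}{V_x(\sqrt{t})} \leq C\cdot 2^{kQ}\bigl(r/\sqrt{t}\bigr)^Q,
\]
with doubling exponent $Q$ depending only on $n$. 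Combining, the contribution from $A_k$ is at most $C\cdot 2^{kQ}(r/\sqrt{t})^Q \exp(E_k)$ with
\[
E_k = a(1+2^{k+1}\varepsilon)^2 r^2 + b - c\cdot 4^k\varepsilon^2 r^2/t.
\]
Expanding $(1+2^{k+1}\varepsilon)^2\leq 2+8\cdot 4^k\varepsilon^2$ and choosing $T_0 = c\varepsilon^2/(16a)$ forces $E_k \leq -c\varepsilon^2 4^k r^2/(4t) + b$ uniformly for $t\in(0,T_0]$ and all $k\geq 0$.

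To finish, set $\tau = r^2/t$, which satisfies $\tau\geq 1$ since $r^2\geq T_0\geq t$. Each annular term is bounded by $C\cdot 2^{kQ}\tau^{Q/2}\exp(-c\varepsilon^2 4^k\tau/4 + b)$. An elementary calculation using the fact that $\tau^{Q/2}e^{-\alpha_k\tau}$ either attains its maximum on $[1,\infty)$ at an interior critical point (giving a bound $C(Q)\alpha_k^{-Q/2}$ with $\alpha_k = c\varepsilon^2 4^k/4$) or is decreasing (giving $e^{-\alpha_k}$) shows that the $k$-th term is dominated by $C(n,\varepsilon,b)\cdot 2^{-kQ}$ once $k$ is large, and by a uniform constant for the finitely many small $k$. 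Summing over $k$ produces the bound $C(n,\varepsilon,a,b)$.

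The main obstacle is the third paragraph: the exponential growth $\exp(ar^2)$ allowed by the hypothesis on $u$ must be overwhelmed by the heat-kernel decay $\exp(-cr^2/t)$ simultaneously for every annulus $A_k$. This is precisely what forces $T_0$ to be a small multiple of $\varepsilon^2/a$. Once $T_0$ is fixed in this way, the polynomial prefactor $(r/\sqrt{t})^Q$ and the dyadic factor $2^{kQ}$ are easily absorbed by the super-exponential decay $\exp(-c\varepsilon^2 4^k r^2/(4t))$, and the rest is bookkeeping.
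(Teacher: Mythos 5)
Your argument is correct, and it supplies a proof that the paper itself omits: Lemma \ref{35} is stated with only the remark that it follows from Theorem \ref{34} and \cite{nt2}, so there is no in-paper proof to compare against. What you wrote is essentially the standard Ni--Tam tail estimate that the citation points to, transplanted to the CR setting using the ingredients the paper does establish in Section 2: the Gaussian upper bound for $H$ from \cite[Proposition 3.1]{ccht} (the paper's (\ref{3}) and (\ref{4})) and the CR volume doubling property. Two small points worth making explicit when you write this up. First, on the innermost annuli one may have $t>r(x,y)^{2}$, so you should note that (\ref{3}) and (\ref{4}) combine into a single bound $H(x,y,t)\leq C\,V_{x}^{-1}(\sqrt{t})\exp\bigl(-c\,r^{2}(x,y)/t\bigr)$ valid for all $t>0$ (at the cost of enlarging $C$), since for $t\geq r^{2}(x,y)$ the exponential factor is bounded below; you implicitly use this uniform form. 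Second, the hypothesis on $u$ is an average (not pointwise) bound, and you correctly exploit it only through $\int_{A_{k}}|u|\leq\int_{B_{o}((1+2^{k+1}\varepsilon)r)}|u|$, which is exactly how it must be used. With $T_{0}$ chosen proportional to $\varepsilon^{2}/a$ your exponent bookkeeping closes (the inequality $(1+2^{k+1}\varepsilon)^{2}\leq 2+8\cdot4^{k}\varepsilon^{2}$ and the absorption of $2ar^{2}$ into the $4^{k}\varepsilon^{2}r^{2}/t$ term both check out), and the dyadic sum converges super-exponentially, giving a constant depending only on $n,\varepsilon,a,b$ as required.
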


\begin{corollary}
\label{87} With the same assumption and notations as in Lemma \ref{35}. Then
for $x\in M$ with $r=r(x)\geq \sqrt{T_{0}}$ such that $u\geq0$ on
$B_{x}(\varepsilon r)$, then for any $0\leq t<T_{0}$
\[
-C(n,\varepsilon,a,b)+C(n,\varepsilon)\inf_{B_{x}(\varepsilon r)}u\leq
v(x,t)\leq C(n,\varepsilon,a,b)+\sup_{B_{x}(\varepsilon r)}u
\]
for some positive constant $C(n,\varepsilon)$.
\end{corollary}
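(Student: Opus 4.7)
The plan is to apply Lemma~\ref{35} directly and then use the hypothesis $u\ge 0$ on $B_x(\varepsilon r)$ to convert the integral $\int_{B_x(\varepsilon r)} H(x,y,t)\,u(y)\,dy$ into $\sup u$ and $\inf u$ times the heat-kernel mass
\[
M_\varepsilon(x,t):=\int_{B_x(\varepsilon r)} H(x,y,t)\,dy.
\]
First, Lemma~\ref{35} gives
\[
\Big|\,v(x,t)-\int_{B_x(\varepsilon r)} H(x,y,t)\,u(y)\,dy\,\Big|\le C(n,\varepsilon,a,b),
\]
and since the integrand is nonnegative on $B_x(\varepsilon r)$,
\[
M_\varepsilon(x,t)\inf_{B_x(\varepsilon r)} u\ \le\ \int_{B_x(\varepsilon r)} H(x,y,t)\,u(y)\,dy\ \le\ M_\varepsilon(x,t)\sup_{B_x(\varepsilon r)} u.
\]

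Next, the upper half of the desired inequality is immediate from the trivial bound $M_\varepsilon(x,t)\le\int_M H(x,y,t)\,dy\le 1$, which holds by stochastic completeness of the CR heat semigroup in our setting (a consequence of the Gaussian bounds and volume-doubling of~\cite{ccht}). The crux is therefore to produce a uniform positive lower bound $M_\varepsilon(x,t)\ge C(n,\varepsilon)$ for all $(x,t)$ with $r(x)\ge\sqrt{T_0}$ and $0\le t<T_0$.

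For this I would invoke the Gaussian lower estimate $H(x,y,t)\ge c_1 V_x(\sqrt t)^{-1}\exp\!\big(-c_2 r^2(x,y)/t\big)$ from~\cite{ccht} and split into two subcases. If $\sqrt t\le\varepsilon r$, then $B_x(\sqrt t)\subset B_x(\varepsilon r)$, and restricting the integration to $B_x(\sqrt t)$ gives $M_\varepsilon(x,t)\ge c_1 e^{-c_2}$ at once. If instead $\sqrt t>\varepsilon r$, then on $B_x(\varepsilon r)$ one has $r(x,y)<\sqrt t$, so the exponential is at least $e^{-c_2}$, while the ratio $V_x(\varepsilon r)/V_x(\sqrt t)$ is bounded below by $C^{-1}(\varepsilon r/\sqrt t)^{2C_6}\ge C^{-1}\varepsilon^{2C_6}$ by the CR volume-doubling of~\cite[Corollary~1.1]{ccht} combined with $\sqrt t<\sqrt{T_0}\le r$. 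Either way $M_\varepsilon(x,t)\ge C(n,\varepsilon)>0$, and combining with the preceding bracket gives the announced two-sided inequality (absorbing constants into $C(n,\varepsilon,a,b)$).

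The main obstacle is precisely this uniform lower bound on $M_\varepsilon(x,t)$: in the regime where $t$ is close to $T_0$ and $r$ is only of order $\sqrt{T_0}$, the heat kernel need not be concentrated inside $B_x(\varepsilon r)$, so the doubling property must be used to absorb the discrepancy between $V_x(\varepsilon r)$ and $V_x(\sqrt t)$, which is exactly where the $\varepsilon$-dependence of $C(n,\varepsilon)$ enters. Once this is established, the rest is a routine rearrangement of Lemma~\ref{35}.
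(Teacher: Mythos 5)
Your argument is correct, and it is the standard one: the paper states Corollary \ref{87} without proof, and the intended justification is exactly what you give --- bracket $\int_{B_x(\varepsilon r)}H(x,y,t)u(y)\,dy$ between $M_\varepsilon\inf u$ and $M_\varepsilon\sup u$, use $\int_M H(x,y,t)\,dy\le 1$ for the upper bound, and use the Gaussian lower bound plus volume doubling (with $\sqrt t<\sqrt{T_0}\le r$) to get $M_\varepsilon(x,t)\ge C(n,\varepsilon)$ for the lower bound. Your case split on $\sqrt t\lessgtr\varepsilon r$ correctly handles the only delicate regime, so nothing is missing.
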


Suppose that $|u(x)-u(y)|\leq \beta r(x,y)$, then, by Theorem \ref{34}, $v$ is
defined for all $t$. We have the following.

\begin{lemma}
Let $(M,J,\theta)$ be a complete noncompact strictly pseudoconvex CR
$(2n+1)$-manifold with nonnegative pseudohermitian Ricci curvature tensors and
vanishing torsion. Suppose that $u_{0}(x)=0$ and $|u(x)-u(y)|\leq \beta r(x,y)$
for all $x,y\in M$ and let $v$ be the solution of the heat equation with
initial value $u$ obtained in Theorem \ref{34}. Then for all $t>0$,
\[
\sup_{x\in M}|\nabla_{b}v(x,t)|\leq \beta.
\]
\end{lemma}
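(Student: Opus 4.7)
The plan is to use the CR Bochner technique: promote $|\nabla_{b}v|^{2}$ to a subsolution of the CR heat equation and then compare it with the constant $\beta^{2}$ via a parabolic maximum principle.

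The Lipschitz bound $|u(x)-u(y)|\leq\beta r(x,y)$ gives the linear growth $|u(x)|\leq|u(o)|+\beta r(x)$, so the hypotheses of Theorem \ref{34} are satisfied for all times and $v$ is smooth on $M\times(0,\infty)$. Since the pseudohermitian torsion vanishes, the commutation $[\triangle_{b},\mathbf{T}]=0$ (already invoked in Corollary \ref{c31}) makes $v_{0}=\mathbf{T}v$ a solution of the CR heat equation with initial value $u_{0}=0$; uniqueness in the growth class of Theorem \ref{34} forces $v_{0}\equiv 0$ on $M\times(0,\infty)$.

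With $v_{0}\equiv 0$ and vanishing torsion, the CR Bochner identity recorded in (\ref{97}) reads
$$\tfrac{1}{2}\triangle_{b}|\nabla_{b}v|^{2} \,=\, |(\nabla^{H})^{2}v|^{2} + \langle \nabla_{b}v,\nabla_{b}\triangle_{b}v\rangle + 2\,\mathrm{Ric}\bigl((\nabla_{b}v)_{\mathbf{C}},(\nabla_{b}v)_{\mathbf{C}}\bigr).$$
Using $\mathrm{Ric}\geq 0$ and $\partial_{t}v=\triangle_{b}v$, this yields
$$(\partial_{t}-\triangle_{b})|\nabla_{b}v|^{2} \,\leq\, -2|(\nabla^{H})^{2}v|^{2} \,\leq\, 0,$$
so $|\nabla_{b}v|^{2}$ is a nonnegative subsolution of the CR heat equation. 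Standard local CR gradient estimates (cf.\ \cite{chl,ccht}) place $|\nabla_{b}v|^{2}$ in a class of at most polynomial spatial growth on each slab $M\times[0,T]$, which, together with stochastic completeness of $M$ (a consequence of nonnegative pseudohermitian Ricci and CR volume doubling), makes the weak parabolic maximum principle available for such subsolutions. Comparison with the constant $\beta^{2}$, whose initial trace dominates $|\nabla_{b}u|^{2}$ pointwise where $u$ is differentiable, yields $|\nabla_{b}v|^{2}(x,t)\leq\beta^{2}$ on $M\times(0,\infty)$.

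The main obstacle is to rigorously initialize the comparison when $u$ is merely Lipschitz, so that $|\nabla_{b}u|^{2}$ has no classical meaning everywhere. The cleanest fix is to approximate $u$ from the start by smooth Lipschitz functions $u_{\varepsilon}$ with $|\nabla_{b}u_{\varepsilon}|\leq\beta+\varepsilon$ pointwise, built via a horizontal mollification adapted to the sub-Riemannian structure; the Bochner argument above applied to the corresponding solutions $v_{\varepsilon}$ of (\ref{32}) gives $|\nabla_{b}v_{\varepsilon}|\leq\beta+\varepsilon$, and passing to the limit $\varepsilon\to 0$ using stability of the heat-kernel representation $v_{\varepsilon}(x,t)=\int_{M}H(x,y,t)u_{\varepsilon}(y)dy$ on Lipschitz data recovers the asserted bound for $v$.
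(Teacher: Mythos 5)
The paper states this lemma with no proof at all, presenting it (together with Lemma \ref{50}) as a CR analogue of a lemma from \cite{nt2}, so there is no in-text argument to compare against. Your Bochner-plus-maximum-principle strategy is certainly the intended one: the paper itself derives and uses exactly your differential inequality $(\triangle_{b}-\frac{\partial}{\partial t})|\nabla_{b}v|^{2}\geq 2|(\nabla^{H})^{2}v|^{2}$ as (\ref{62}) in the proof of Lemma \ref{72}, and your reduction $v_{0}\equiv 0$ via $[\triangle_{b},\mathbf{T}]=0$ and uniqueness in the growth class of Theorem \ref{34} is the same device the paper uses in Lemma \ref{71}(i). The Bochner computation itself is correct as written.

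There is, however, a concrete gap in your regularization step. The inequality $(\frac{\partial}{\partial t}-\triangle_{b})|\nabla_{b}v_{\varepsilon}|^{2}\leq 0$ holds only because the term $4\langle J\nabla_{b}v_{\varepsilon},\nabla_{b}(v_{\varepsilon})_{0}\rangle$ in the CR Bochner formula has been killed, and that requires $(v_{\varepsilon})_{0}\equiv 0$, hence $(u_{\varepsilon})_{0}=0$ for the mollified data. A generic horizontal mollification of a CC-Lipschitz function will not preserve the condition $u_{0}=0$, and you never address this; note that smallness of $(u_{\varepsilon})_{0}$ is not enough, since the offending term involves $\nabla_{b}(v_{\varepsilon})_{0}$ rather than $(v_{\varepsilon})_{0}$ itself, and it has no sign. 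The gap is fixable: because the torsion vanishes the Reeb field is Killing, $u_{0}=0$ means $u$ is basic, and one can mollify transversally on the local quotients $V_{\alpha}$ of Section 5 and lift back, which preserves $u_{0}=0$ and keeps the Lipschitz constant $\beta+\varepsilon$ --- but this step has to be stated and justified, since the whole argument hinges on it. A second, smaller point is that your appeal to the noncompact parabolic maximum principle rests on (i) an unspecified growth bound for $|\nabla_{b}v|^{2}$ on each slab and (ii) control of its initial trace as $t\to 0^{+}$; for merely Lipschitz $u$ item (ii) is precisely what the mollification must supply, so the two loose ends are coupled and both need to be written out (for smooth data with bounded gradient this is Lemma \ref{50}, which the paper likewise leaves unproved).
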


\begin{lemma}
\label{50} Let $(M,J,\theta)$ be a complete noncompact strictly pseudoconvex
CR $(2n+1)$-manifold with nonnegative pseudohermitian Ricci curvature tensors
and vanishing torsion. Let $u$ be a smooth function on $M$ with bounded
gradient and let $v$ be the solution of the heat equation initial value $u$ as
in (\ref{32}) with $u_{0}(x)=0$. Then for any $t>0$
\[
\sup_{M}|\nabla_{b}v(.,t)|\leq \sup_{M}|\nabla_{b}u|.
\]
\end{lemma}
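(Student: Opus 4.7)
The plan is to deduce this lemma directly from the preceding one by showing that the hypothesis ``bounded horizontal gradient'' implies the Lipschitz condition with respect to the Carnot--Carath\'{e}odory distance. Set $\beta:=\sup_{M}|\nabla_{b}u|$, which is finite by assumption. Given any $p,q\in M$, Chow's theorem (cited in the introduction) provides a nonempty family $C_{p,q}$ of horizontal curves joining them. For any piecewise smooth $\gamma:[0,1]\to M$ with $\dot\gamma(t)\in\mathcal{D}$ a.e., one has
\[
u(q)-u(p)=\int_{0}^{1}\langle\dot\gamma(t),\nabla_{b}u(\gamma(t))\rangle_{L_{\theta}}\,dt,
\]
and Cauchy--Schwarz together with the definition of $l(\gamma)$ gives $|u(q)-u(p)|\leq \beta\, l(\gamma)$. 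Taking the infimum over $\gamma\in C_{p,q}$ yields $|u(q)-u(p)|\leq \beta\, d_{cc}(p,q)=\beta\, r(p,q)$, i.e.\ $u$ is Lipschitz with constant $\beta$.

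In particular $|u(x)|\leq|u(o)|+\beta\, r(x,o)$, which trivially satisfies the exponential average growth required by Theorem~\ref{34}, so the heat solution $v(x,t)=\int_{M}H(x,y,t)u(y)\,dy$ exists for all $t>0$. The hypothesis $u_{0}=0$ is preserved as an assumption, and the Lipschitz bound just established is exactly what the previous (unnumbered) lemma requires. Applying that lemma gives
\[
\sup_{x\in M}|\nabla_{b}v(x,t)|\leq\beta=\sup_{M}|\nabla_{b}u|\qquad\text{for all }t>0,
\]
which is the desired conclusion.

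The only non-routine point is the Lipschitz reduction; everything else is an invocation of results already proved. An alternative, self-contained route would be to differentiate $|\nabla_{b}v|^{2}$ along the flow: since $u_{0}=0$ and $[\Delta_{b},\mathbf{T}]=0$ for vanishing torsion, $v_{0}$ solves the heat equation with zero initial data and hence vanishes identically, so the CR Bochner formula used in (\ref{97}) gives $(\Delta_{b}-\partial_{t})|\nabla_{b}v|^{2}\geq 2|(\nabla^{H})^{2}v|^{2}\geq 0$ using $\mathrm{Ric}\geq 0$; the parabolic maximum principle (applicable because the preceding lemma already yields that $|\nabla_{b}v(\cdot,t)|$ is bounded, so no growth issue arises at infinity) then concludes. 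Either way, the main obstacle---controlling the solution at infinity on the noncompact manifold---is handled automatically once the Lipschitz estimate is in place.
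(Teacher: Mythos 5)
Your proposal is correct, and it follows what is evidently the intended route: the paper states Lemma \ref{50} without proof, placing it immediately after the unnumbered lemma for Lipschitz initial data precisely so that the reduction you carry out (bounded horizontal gradient $\Rightarrow$ $\beta$-Lipschitz with respect to $d_{cc}$ via integration along horizontal curves and Chow connectivity, then invoke the preceding lemma) is the whole argument. Nothing is missing; the Lipschitz step is the only content, and you have supplied it correctly.
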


\section{Poincar\'{e}-Lelong Equation}

In this section, assume that $\rho$ is a given real closed $(1,1)$-form on $M$
with $\rho_{\alpha \overline{\beta},0}=0.$ We will solve the CR Poincare-
Lelong equation
\begin{equation}
i\partial_{b}\overline{\partial}_{b}u=\rho \label{pl51}
\end{equation}
by first solving the CR Poisson equation
\begin{equation}
\Delta_{b}u=2f \label{pl52}
\end{equation}
for $f=trace(\rho).$ In fact, by commutation relation (\ref{A}), the CR
Poincare- Lelong equation (\ref{pl51}) implies
\begin{equation}
\triangle_{b}u+hu_{0}=2f, \label{pl53}
\end{equation}
where $h=\sum_{\alpha=1}^{n}h_{\alpha \overline{\alpha}}$. In the present
paper, it follows from Corollary \ref{c31} that $u_{0}(x)=0$ and then
(\ref{pl53}) reduces to the CR Poisson equation (\ref{pl52}).

Note that it follows from Theorem \ref{A1} that one can solve the CR Poisson
equation $\triangle_{b}u=2f$ \ with $u$ satisfying (\ref{40}). Moreover, $u$
is given by
\begin{equation}
\label{103}u(x)=2\int_{M}(G(o,y)-G(x,y))f(y)dy.
\end{equation}
Theorem \ref{A2} will follow from Lemma \ref{71} and Lemma \ref{72} below.

\begin{lemma}
\label{71} The same assumption as in Theorem \ref{A2}. Then

(i) The Cauchy problem (\ref{32}) with initial value $u$ has long time
solution $v(x,t)$ which is given by
\[
v(x,t)=\int_{M}H(x,y,t)u(y)dy
\]
and
\[
v_{0}(x,t)=0.
\]

(ii) There exists $t_{i}\rightarrow \infty$ such that $v(x,t_{i})-v(o,t_{i})$
together with their derivatives converge uniformly on compact subsets to a
constant function.
\end{lemma}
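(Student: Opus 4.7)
The plan for (i) is to verify the growth hypothesis of Theorem \ref{34} for $u$ so that the CR heat equation is solvable for all $t>0$, and then propagate $u_{0}=0$ to all positive times via $[\Delta_{b},\mathbf{T}]=0$. From the estimate (\ref{40}) combined with (\ref{100}), which forces $\int_{0}^{\infty}k(t)\,dt<\infty$, every term bounding $|u(x)|$ grows at most linearly in $r(x)$: the terms $\alpha_{i}r\int_{2r}^{\infty}k(t)\,dt$ are clearly $O(r)$, $\int_{0}^{2r}tk(t)\,dt\leq 2r\int_{0}^{\infty}k(t)\,dt=O(r)$, and $\int_{0}^{\varepsilon r}tk(x,t)\,dt$ can be dominated by $O\bigl(r^{2}k(o,(1+\varepsilon)r)\bigr)=o(r)$ via a standard doubling/Fubini swap, using $tk(o,t)\to 0$. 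Hence $V(B_{o}(r))^{-1}\int_{B_{o}(r)}|u|\,dx\leq \exp(ar^{2}+b)$ for every $a>0$, and Theorem \ref{34} produces $v(x,t)=\int_{M}H(x,y,t)u(y)\,dy$ on $M\times(0,\infty)$. Since $\rho_{\alpha\overline{\beta},0}=0$ forces $f_{0}=0$, Corollary \ref{c31} gives $u_{0}=0$; because $[\Delta_{b},\mathbf{T}]=0$ under vanishing torsion, $v_{0}=\mathbf{T}v$ solves the CR heat equation with zero initial datum, and uniqueness in this growth class yields $v_{0}\equiv 0$.

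For (ii) the strategy is to first obtain a uniform Lipschitz bound for $v(\cdot,t)-v(o,t)$, extract a subsequential limit, and identify the limit via the CR Liouville property. Theorem \ref{49}(i) bounds $|\nabla_{b}u(x)|$ by $C\int_{0}^{\infty}k(x,t)\,dt$; a volume-doubling comparison between $k(x,t)$ and appropriate shifts of $k(o,\cdot)$ converts (\ref{100}) into a uniform-in-$x$ bound on this integral, so $|\nabla_{b}u|$ is bounded on $M$. Combined with $u_{0}=0$ from (i), Lemma \ref{50} gives $\sup_{M}|\nabla_{b}v(\cdot,t)|\leq \sup_{M}|\nabla_{b}u|$ for every $t>0$. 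Hence the family $\{v(\cdot,t)-v(o,t)\}_{t>0}$ is equi-Lipschitz in the Carnot--Carath\'eodory distance and locally uniformly bounded on $M$.

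To identify the limit, observe that $\Delta_{b}v$ itself solves the CR heat equation with initial datum $2f$, so
\[
\Delta_{b}v(x,t)=2\int_{M}H(x,y,t)f(y)\,dy,
\]
and splitting this integral at $B_{x}(\sqrt{t})$ and using the Gaussian heat-kernel bounds of Section 2 controls the on-diagonal contribution by $Ck(x,\sqrt{t})$ while the off-diagonal tail decays exponentially. Since $\int_{0}^{\infty}k(x,t)\,dt<\infty$ implies $\liminf_{t\to\infty}k(x,t)=0$ for each $x$, one can select $t_{i}\to\infty$ along which $\Delta_{b}v(\cdot,t_{i})\to 0$ pointwise on a dense countable set; the uniform Lipschitz bound and a diagonalization upgrade this to uniform convergence on compact sets. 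Arzel\`a--Ascoli applied to $\{v(\cdot,t_{i})-v(o,t_{i})\}$ yields, after passing to a further subsequence, a limit $w$ uniformly on compacts, and parabolic Schauder estimates for horizontal derivatives of $v$ lift this to $C^{\infty}_{\mathrm{loc}}$ convergence. The limit $w$ satisfies $\Delta_{b}w=0$, $\mathbf{T}w=0$, $w(o)=0$, and $|\nabla_{b}w|<\infty$, so the CR Liouville property (Lemma \ref{T61}) forces $w\equiv 0$, a constant function. The main obstacle I expect is promoting uniform convergence to $C^{\infty}_{\mathrm{loc}}$ convergence, which hinges on uniform higher-order horizontal derivative estimates for the heat flow with linearly growing initial data; these should follow from differentiating the heat-kernel representation and applying the derivative bounds of Section 2.
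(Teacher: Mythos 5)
Part (i) of your proposal is essentially sound and close to the paper's argument: the paper verifies the growth hypothesis of Theorem \ref{34} by an $L^{1}$-average estimate $\frac{1}{V(B_{o}(R))}\int_{B_{o}(R)}|u|\leq C(n)(1+R^{2})$ obtained from the Green-function representation of $u$, while you extract a pointwise bound from (\ref{40}); either suffices, and the propagation of $v_{0}=0$ by $[\Delta_{b},\mathbf{T}]=0$ plus uniqueness is exactly what the paper does.

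Part (ii), however, has a genuine gap at the final Liouville step. Your argument produces a subsequential limit $w$ satisfying $\Delta_{b}w=0$, $\mathbf{T}w=0$, $w(o)=0$ and $|\nabla_{b}w|\leq\sup_{M}|\nabla_{b}u|<\infty$, and you then invoke Lemma \ref{T61} to conclude $w\equiv 0$. But Lemma \ref{T61} applies to CR \emph{plurisubharmonic} functions of growth $o(\log r)$; your $w$ is neither known to be plurisubharmonic (in Theorem \ref{A2} the form $\rho$ is not assumed nonnegative, only its trace is) nor of $o(\log r)$ growth --- a bounded-gradient function can grow linearly. Indeed a pseudoharmonic function with $\mathbf{T}w=0$ and bounded horizontal gradient need not be constant: on the Heisenberg group the coordinate function $x_{1}$ satisfies all of your conditions. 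The time-uniform bound $\sup_{M}|\nabla_{b}v(\cdot,t)|\leq\sup_{M}|\nabla_{b}u|$ from Lemma \ref{50} is therefore not enough. The missing ingredient, which is the heart of the paper's proof, is a \emph{decaying} gradient estimate: approximating $f$ by compactly supported $f_{i}$, applying the maximum principle to $|\nabla_{b}v_{i}|$ as a subsolution of the CR heat equation, and using the mean-value inequality together with the averaged gradient bound of Theorem \ref{49}(ii), the paper obtains
\[
\sup_{x\in B_{o}(\sqrt{t})}|\nabla_{b}v|(x,t)\leq C(n)\int_{4\sqrt{t}}^{\infty}k(s)\,ds\longrightarrow 0
\quad\text{as } t\to\infty,
\]
which forces any subsequential limit of $v(\cdot,t_{i})-v(o,t_{i})$ to have identically vanishing gradient, hence to be constant, with no Liouville theorem needed. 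Your selection of $t_{i}$ so that $\Delta_{b}v(\cdot,t_{i})\to 0$ is correct but insufficient; you should replace the Liouville step with this quantitative decay of $|\nabla_{b}v|$ (and keep the bound $0\leq\partial_{t}v\leq C(n)t^{-1/2}\int_{\sqrt{t}}^{\infty}k(x,s)\,ds$ to control $v(x,t)-v(o,t_{0})$ locally uniformly in $t$, as the paper does).
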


\begin{proof}
(i) We want to apply Theorem \ref{34}. For any $R>0$ and $x\in B_{o}(R)$,
\begin{equation}
\begin{array}
[c]{ccl}
|u(x)| & \leq & 2\int_{M}|G(o,y)-G(x,y)|f(y)dy \label{45}\\
& \leq & 2\int_{M\backslash B_{o}(4r)}|G(o,y)-G(x,y)|f(y)dy\\
&& +  2\int_{B_{o}(4R)}|G(o,y)-G(x,y)|f(y)dy\\
& = & I(x)+II(x).
\end{array}
\end{equation}
It follow from the proof of (\ref{13}) and the condition (\ref{100}), we have
\begin{equation}
\label{102}I(x)\leq C(n)r(x)\int_{4R}^{\infty}k(s)ds\leq C(n)r(x),
\end{equation}
where $C(n)$ is a positive constant. On the other hand, we have
\begin{equation}
\begin{array}
[c]{ccl}
\int_{B_{o}(R)}II(x)dx & \leq & \int_{x\in B_{o}(R)}[2\int_{y\in B_{o}%
(4R)}(G(o,y)+G(x,y))f(y)dy]dx\nonumber \label{44}\\
& = & \int_{y\in B_{o}(4R)}[2\int_{B_{o}(R)}(G(o,y)+G(x,y))dx]f(y)dy
\end{array}
\end{equation}
and from the proof of (\ref{14}),
\begin{equation}%
\begin{array}
[c]{ccl}
\label{42} &  & \int_{y\in B_{o}(4R)}[\int_{B_{o}(R)}G(o,y)dx]f(y)dy\\
& = & 2V_{o}(R)\int_{B_{o}(4R)}G(o,y)f(y)dy\\
& \leq & C(n)V_{o}(R)[R^{2}k(4R)+\int_{0}^{4R}tk(t)dt].
\end{array}
\end{equation}
Moreover from \cite[Corollary 1.1.]{ccht}, we have
\[
tA_{o}(t)\leq C(n)V(B_{o}(t)).
\]
Thus
\begin{align}
&  2\int_{y\in B_{o}(4R)}[\int_{x\in B_{o}(R)}%
G(x,y)dx]f(y)dy\nonumber \label{43}\\
&  \leq C(n)\int_{y\in B_{o}(4R)}[\int_{x\in B_{o}(R)}\frac{r^{2}%
(x,y)}{V(B_{y}(r(x,y)))}dx]f(y)dy\nonumber \\
&  \leq C(n)\int_{y\in B_{o}(4R)}[\int_{x\in B_{y}(5R)}\frac{r^{2}%
(x,y)}{V(B_{y}(r(x,y)))}dx]f(y)dy\\
&  =C(n)\int_{y\in B_{o}(4R)}[\int_{0}^{5R}\frac{t^{2}A_{y}(t)}{V(B_{y}(t))}dt]f(y)dy\nonumber \\
&  \leq C(n)V(B_{o}(R))R^{2}k(4R).\nonumber
\end{align}
All together with (\ref{44}), (\ref{42}) and (\ref{43}), we have
\begin{equation}
\label{101}\int_{B_{o}(R)}II(x)dx\leq C(n)V(B_{o}(R))(R^{2}k(4R)+\int_{0}%
^{4R}sk(s)ds).
\end{equation}
Also from (\ref{45}), (\ref{102}), (\ref{101}) and (\ref{100}), we have
\[
\frac{1}{V(B_{o}(R))}\int_{B_{o}(R)}|u|\leq C(n)(1+R^{2}).
\]
Then (i) of Lemma follows easily from Theorem \ref{34}. Furthermore, since the
torsion is vanishing, it is easy to know that $v_{0}(x,t)$ is a solution of
the following system:
\begin{equation}
\left \{
\begin{array}
[c]{l}%
(\frac{\partial}{\partial t}-\triangle_{b})\mu(x,t)=0,\\
\mu(x,0)=0.
\end{array}
\right.
\end{equation}
It follows from Theorem \ref{34} again that
\[
v_{0}(x,t)=\mathbf{T}(v(x,t))=0
\]
on $M\times \lbrack0,\infty).$

(ii) Let us first give an estimate of $|\nabla_{b}v|$. We proceed as in proof
of Theorem \ref{34}. Namely, use cut-off functions $\varphi_{i}$ and denote
$f_{i}=\varphi_{i}f$. Solve
\[
\triangle_{b}u=2f_{i}
\]
by using (\ref{103}) and find the solution $v_{i}$ of (\ref{32}) with initial
value $u_{i}$. Then $v_{i}$ subconverge to $v$ together with their derivatives
uniformly on compact sets on $M\times \lbrack0,\infty)$. From Theorem \ref{49},
we know that $|\nabla_{b}u_{i}|$ is bounded, and hence $|\nabla_{b}v_{i}|$ is
bounded by Lemma \ref{50}. We can apply the maximum principle to $|\nabla
_{b}v_{i}|$ which is a subsolution of the CR heat equation and conclude that
for any $x$ such that $r(x)\leq \sqrt{t}$,
\begin{align}
|\nabla_{b}v_{i}|  &  \leq \int_{M}H(x,y,t)|\nabla_{b}u_{i}|dy\nonumber \\
& \leq C(n)\sup_{r\geq \sqrt{t}}\frac{1}{V(B_{x}(r))}\int_{B_{x}(r)}%
|\nabla_{b}u_{i}|(y)dy\nonumber \\
&  \leq C(n)\sup_{r\geq \sqrt{t}}\frac{1}{V(B_{o}(2r))}\int_{B_{o}(2r)}
|\nabla_{b}u_{i}|(y)dy\\
&  \leq C(n)\sup_{r\geq \sqrt{t}}(\int_{4r}^{\infty}k(s)ds+rk(4r))\nonumber \\
&  \leq C(n)\int_{4\sqrt{t}}^{\infty}k(s)ds\nonumber
\end{align}
for some positive constant $C(n)$. Here we have used \cite[(3.13)]{cf} and
Theorem \ref{49} and CR Volume doubling property as well as the fact that
$0\leq f_{i}\leq f$. Hence
\[
\sup_{x\in B_{o}(\sqrt{t})}|\nabla_{b}v_{i}|(x,t)\leq C(n)\int_{4\sqrt{t}}^{\infty}k(s)ds
\]
for all $i$ and then
\begin{equation}
\label{104}\sup_{x\in B_{o}(\sqrt{t})}|\nabla_{b}v|(x,t)\leq C(n)\int_{4\sqrt{t}}^{\infty}k(s)ds.
\end{equation}
On the other hand, $f_{i}$ has compact support, $u_{i}$ and $v_{i}$ are
bounded. Since $(v_{i})_{t}$ is a solution of the following system:
\begin{equation}
\left \{
\begin{array}
[c]{l}
(\frac{\partial}{\partial t}-\triangle_{b})\mu(x,t)=0,\\
\mu(x,0)=2f_{i}(x).
\end{array}
\right.
\end{equation}
as in the proof of Lemma \ref{50}, one can prove that for any $T>0$, there
exist constant $C_{i}(n)$ such that
\[
\int_{0}^{T}[\frac{1}{V(B_{o}(r))}\int_{B_{o}(r)}|(v_{i})_{t}|^{2}dy]dt\leq
C_{i}(n)
\]
for all $r$. Hence we can apply maximum principle and conclude that
\begin{align}
\frac{\partial v_{i}}{\partial t}  &  =2\int_{M}H(x,y,t)f_{i}(y)dy\nonumber \\
&  \leq C(n)\sup_{r\geq \sqrt{t}}\frac{1}{V(B_{x}(r))}\int_{B_{x}(r)}%
f_{i}(y)dy\nonumber \\
&  \leq C(n)\sup_{r\geq \sqrt{t}}k(x,t).\nonumber
\end{align}
Note that we also have $(v_{i})_{t}\geq0$. Hence
\begin{equation}
\label{52}0\leq \frac{\partial v}{\partial t}(x,t)\leq C(n)\sup_{r\geq \sqrt{t}%
}k(x,t)\leq \frac{C(n)}{\sqrt{t}}\int_{\sqrt{t}}^{\infty}k(x,s)ds.
\end{equation}
All and (\ref{100}), (\ref{104}) and (\ref{52}), for any $t_{0}>1$, and $r>0$,
the function $v(x,t)-v(0,t_{0})$ is bounded in $B_{0}(r)\in \lbrack
t_{0}-1,t_{0}+1]$ by a constant which is independent of $t_{0}$ and
\[
\lim_{t\rightarrow \infty}\sup_{B_{o}(r)}|\nabla_{b}v(.,t)|\rightarrow0.
\]
Hence we know that (ii) is true.
\end{proof}

Now we denote that
\[
\omega=u-v.
\]
\begin{lemma}
\label{72} Assume that $\rho$ satisfies the equality:
\[
\rho_{\alpha,\overline{\beta},0}=0
\]
for $\alpha,\beta=1,\cdots,n$. As $t\rightarrow \infty$, we have $||\rho
-\sqrt{-1}\partial_{b}\overline{\partial_{b}}\omega||$ converges to zero
uniformly on compact subsets of $M$.
\end{lemma}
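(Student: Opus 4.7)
The goal is to prove $\sqrt{-1}\partial_b\overline{\partial}_b\omega(\cdot,t)\to\rho$ uniformly on compact subsets as $t\to\infty$. Since $\omega(x,t)=u(x)-v(x,t)$, one has
\[
\sqrt{-1}\partial_b\overline{\partial}_b\omega-\rho=\bigl(\sqrt{-1}\partial_b\overline{\partial}_b u-\rho\bigr)-\sqrt{-1}\partial_b\overline{\partial}_b v(\cdot,t),
\]
so it suffices to control the large-time behaviour of the time-dependent $(1,1)$-tensor $v_{\alpha\bar\beta}(\cdot,t)$. My plan is to introduce
\[
E_{\alpha\bar\beta}(x,t):=v_{\alpha\bar\beta}(x,t)-\bigl(u_{\alpha\bar\beta}(x)-\rho_{\alpha\bar\beta}(x)\bigr),
\]
derive a tensorial heat equation for $E$ with initial value $E(\cdot,0)=\rho$, and then conclude $E(\cdot,t)\to 0$ uniformly on compact sets.

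The derivation of the evolution equation uses three inputs: (a) the CR heat equation $v_t=\Delta_b v$; (b) the identity $v_0\equiv 0$ from Lemma \ref{71}(i), which kills the Reeb-type remainders that arise when commuting $[\Delta_b,\partial_\alpha\overline{\partial}_\beta]$ on a function; (c) the hypothesis $\rho_{\alpha\bar\beta,0}=0$ combined with the closedness of $\rho$, which makes the stationary tensor $u_{\alpha\bar\beta}-\rho_{\alpha\bar\beta}$ compatible with the evolution of $v_{\alpha\bar\beta}$. Under vanishing torsion these assemble into a Bochner--Weitzenb\"{o}ck equation for $E$ in which the curvature pairing involves only the pseudohermitian bisectional curvature. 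Taking inner product with $E$ and using nonnegativity of that curvature then gives
\[
(\partial_t-\Delta_b)|E|^2\;\le\;-2|\nabla^H E|^2\;\le\;0,
\]
so $|E|^2$ is a subsolution of the CR heat equation. The hypotheses (\ref{100}) and (\ref{106}) place $|\rho|^2=|E(\cdot,0)|^2$ in the admissible class for Theorem \ref{34}, yielding a heat-kernel upper envelope $|E|^2(\cdot,t)\le (H_t\ast|\rho|^2)(x)$. Combined with Lemma \ref{71}(ii), which supplies a subsequence $t_i\to\infty$ along which $v(\cdot,t_i)-v(o,t_i)\to\textrm{const}$ in $C^\infty_{loc}$ and hence $v_{\alpha\bar\beta}(\cdot,t_i)\to 0$ uniformly on compacts, the subsolution envelope pins down $u_{\alpha\bar\beta}=\rho_{\alpha\bar\beta}$ in the limit and then upgrades subsequential convergence to full-time convergence of $E$ to $0$ on every compact set.

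The principal obstacle is the CR tensorial commutator identity. Unlike the K\"{a}hler case, $[\Delta_b,\partial_\alpha\overline{\partial}_\beta]$ acting on a function carries additional terms involving the Reeb field $\mathbf{T}$ and the pseudohermitian torsion, and one must verify carefully that the two standing hypotheses $v_0=0$ and $\rho_{\alpha\bar\beta,0}=0$, together with vanishing torsion, simultaneously annihilate all such remainders so that the equation for $E$ closes cleanly. Once this algebraic step is carried out the dissipation argument is a direct CR analogue of the Ni--Tam scheme and the convergence stated in the lemma follows.
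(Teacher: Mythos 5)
Your overall strategy coincides with the paper's: your tensor $E_{\alpha\bar\beta}=v_{\alpha\bar\beta}-(u_{\alpha\bar\beta}-\rho_{\alpha\bar\beta})$ is exactly the paper's $\eta=\rho-i\partial_b\overline{\partial}_b\omega$ (with $E(\cdot,0)=\rho$ since $v(\cdot,0)=u$), and the paper likewise places $\eta$ into the Lichnerowicz--Laplacian heat equation (\ref{84}) --- though it does so more cleanly by writing $\rho=i\partial_b\overline{\partial}_b\Psi$ locally, using $\rho_{\alpha\overline{\beta},0}=0$ to reduce $\triangle_b\Psi+ih\Psi_0=2f$ to $\triangle_b\Psi=2f$, and observing that $\Psi-\omega$ solves the CR heat equation, which sidesteps most of the commutator bookkeeping you flag as the ``principal obstacle.'' It then uses nonnegative bisectional curvature to produce a subsolution and dominates it by a heat-kernel integral of the initial data, as you propose.

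There are, however, two genuine gaps. First, the heat-kernel envelope is not automatic: on a complete noncompact manifold a nonnegative subsolution with prescribed initial data is dominated by the heat-kernel convolution of that data only after one controls the growth of the subsolution at \emph{positive} times; checking that $|\rho|^2$ is admissible for Theorem \ref{34} only addresses $t=0$. The bulk of the paper's proof (the estimates (\ref{62})--(\ref{68}) culminating in (\ref{107})) consists precisely of showing $\liminf_{r\rightarrow\infty}\int_0^T\int_{B_o(r)}||\eta||^2e^{-ar^2}\,dxdt<\infty$ via the $L^2$ Hessian bounds of Theorem \ref{49} applied to $u$ and $v$; this is where hypothesis (\ref{106}) actually enters, and your proposal omits this step entirely. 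Second, you run the subsolution argument on $|E|^2$, which yields the envelope $\int_M H(x,y,t)|\rho|^2(y)dy$; to send this to zero you would need the averages $\frac{1}{V(B_x(r))}\int_{B_x(r)}||\rho||^2d\mu$ to decay, which neither (\ref{100}) nor (\ref{106}) guarantees. The paper instead makes $||\eta||$ itself a subsolution (Kato-type inequality), so the envelope is $\int_M H(x,y,t)||\rho||(y)dy$, which is killed by (\ref{100}) together with the mean-value inequality \cite[(3.13)]{cf}. A minor further point: once the envelope is established, uniform convergence on compact sets is immediate; the appeal to Lemma \ref{71}(ii) and to a subsequence $t_i\rightarrow\infty$ is not needed for this lemma (it is used later, in deducing Theorem \ref{A2}).
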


\begin{proof}
We claim that
\begin{equation}
\label{53}||\rho-i\partial_{b}\overline{\partial_{b}}\omega||(x,t)\leq \int
_{M}H(x,y,t)||\rho||(y)dy.
\end{equation}
If (\ref{53}) is true, then one can using \cite[(3.13)]{cf} again to conclude
that, for $x\in B_{o}(\sqrt{t})$,
\begin{align}
||\rho-i\partial_{b}\overline{\partial_{b}}\omega||(x,t)  &  \leq
C(n)\sup_{r\geq \sqrt{t}}\frac{1}{V(B_{x}(r))}\int_{B_{x}(r)}||\rho
||(y)dy\nonumber \\
&  \leq C(n)\sup_{r\geq \sqrt{t}}\frac{1}{V(B_{x}(2r))}\int_{B_{x}(2r)}%
||\rho||(y)dy\nonumber
\end{align}
for some positive constant $C(n)$. From the assumption (\ref{100}), this
implies that
\[
\sup_{B_{o}(\sqrt{t})}||\rho-i\partial_{b}\overline{\partial_{b}}%
\omega||(.,t)\rightarrow0,
\]
as $t\rightarrow \infty$ and the lemma follows.

To prove (\ref{53}), we first observe that since $\rho$ is real closed
(1,1)-form and the torsion is vanishing, locally it can be written as
$i\partial_{b}\overline{\partial_{b}}\Psi$. Then we have
\begin{equation}
\triangle_{b}\Psi+ih\Psi_{0}=2f,\label{55}
\end{equation}
where $h=\sum_{\alpha=1}^{n}h_{\alpha \overline{\alpha}}$. From the equation
(\ref{54}), we know that the equation (\ref{55}) turns into the following
equation
\[
\triangle_{b}\Psi=2f.
\]
From the definition of $\omega$, we have
\[
(\triangle_{b}-\frac{\partial}{\partial t})(\Psi-\omega)(x,t)=0.
\]
Hence $\eta=\rho-i\partial_{b}\overline{\partial}_{b}\omega=i\partial
_{b}\overline{\partial}_{b}(\Psi-\omega)$ satisfies the following
Lichnerowicz--Laplacian equation
\begin{equation}
\frac{\partial}{\partial t}\eta_{\alpha \overline{\beta}}=\triangle_{b}
\eta_{\alpha \overline{\beta}}+2R_{\alpha \overline{\gamma}\mu \overline{\beta}
}\eta_{\gamma \overline{\mu}}-(R_{\gamma \overline{\beta}}\eta_{\alpha
\overline{\gamma}}+R_{\alpha \overline{\gamma}}\eta_{\gamma \overline{\beta}
}).\label{84}
\end{equation}
Here we use the CR Bochner-Weitzenb\"{o}ck formula as in \cite{ccf}. Now by
(\ref{100}), since $\omega=0$ at $t=0$, we can have the following inequality
\[
\int_{M}||\eta||(x,0)\exp(-ar^{2}(x))dx<\infty
\]
for any $a>0$. Next we estimate $|(\nabla^{H})^{2}v|$. \ By CR Bochner formula
(\cite{cc}), we have
\[
(\triangle_{b}-\frac{\partial}{\partial t})|\nabla_{b}v|^{2}=\triangle
_{b}|\nabla_{b}v|^{2}-\frac{\partial}{\partial t}|\nabla_{b}v|^{2}
\geq2|(\nabla^{H})^{2}v|^{2}+4\langle J\nabla_{b}v,\nabla_{b}v_{0}\rangle.
\]
From Theorem \ref{34} and vanishing of the torsion in assumptions in Theorem
\ref{A2}, we have $v_{0}=0$ for all $t\geq0$. so the inequality becomes the
following
\begin{equation}
(\triangle_{b}-\frac{\partial}{\partial t})|\nabla_{b}v|^{2}\geq2|(\nabla
^{H})^{2}v|^{2}.\label{62}
\end{equation}
For any $T>1$ and $r^{2}\geq T$, multiplying the inequality (\ref{62}) by a
suitable cut-off function and integrating by parts, we have
\begin{align}
&  \int_{0}^{T}[\frac{1}{V(B_{o}(r))}\int_{B_{o}(r)}|(\nabla^{H})^{2}
v|^{2}dx]dt\nonumber \label{63}\\
 \leq & C_{1}[\frac{1}{r^{2}}\int_{0}^{T}\frac{1}{V(B_{o}(2r))}\int
_{B_{o}(2r)}|\nabla_{b}v|^{2}dxdt+\frac{1}{V(B_{o}(2r))}\int_{B_{o}
(2r)}|\nabla_{b}u|^{2}dx].
\end{align}
Then we have
\begin{align}
&  \int_{0}^{T}\frac{1}{V(B_{o}(r))}\int_{B_{o}(r)}|(\nabla^{H})^{2}
v|^{2}dxdt\nonumber \label{64}\\
 \leq & C_{2}[(T+1)\frac{1}{V(B_{o}(8r))}\int_{B_{o}(8r)}|\nabla_{b}
u|^{2}dx\nonumber \\
&\,  +\int_{0}^{T}t^{-2}(\int_{8r}^{\infty}\exp(-\frac{C(\rho_{2})s^{2}}
{8t})s\frac{1}{V(B_{o}(s))}\int_{B_{o}(s)}|\nabla_{b}u|^{2}dyds)^{2}dt].
\end{align}
From Theorem \ref{49}, (\ref{100}) and (\ref{64}),
\begin{align}
&  \int_{0}^{T}\frac{1}{V(B_{o}(r))}\int_{B_{o}(r)}|(\nabla^{H})^{2}
v|^{2}dt\nonumber \label{65}\\
  \leq &\,  C_{3}[(T+1)\frac{1}{V(B_{o}(8r))}\int_{B_{o}(8r)}|\nabla_{b}
u|^{2}dx\nonumber \\
& \,  +\int_{0}^{T}(\int_{4r}^{\infty}\exp(-\frac{C(\rho_{2}s^{2})}{8t}
)d(\frac{s^{2}}{t}))^{2}dt]\\
 \leq & \, C_{4}(T+1)[\frac{1}{V(B_{o}(4r))}\int_{B_{o}(4r)}|\nabla_{b}
u|^{2}dx+1]\nonumber \\
 \leq & \, C_{5}(T+1)[(\int_{16r}^{\infty}k(s)ds)^{2}+r^{2}\frac{1}
{V(B_{o}(16r))}\int_{B_{o}(16r)}||\rho||^{2}dx+1]\nonumber \\
\leq & \, C_{6}(T+1)[r^{2}\frac{1}{V(B_{o}(8r))}\int_{B_{o}(8r)}||\rho
||^{2}dx+1],\nonumber
\end{align}
where we use Theorem \ref{49}. We can also obtain from Theorem \ref{49} that
\begin{equation}
\frac{1}{V(B_{o}(r))}\int_{B_{o}(r)}|(\nabla^{H})^{2}u|^{2}dx\leq C_{7}
[r^{2}\frac{1}{V(B_{o}(2r))}\int_{B_{o}(2r)}||\rho||^{2}dx+1]\label{105}
\end{equation}
for some constant $C_{7}$ depending only on $n$. \ From (\ref{65}) and
(\ref{105}), we have
\begin{align}
&  \int_{0}^{T}\frac{1}{V(B_{o}(r))}\int_{B_{o}(r)}||\rho-\sqrt{-1}
\partial \overline{\partial}\omega||^{2}dxdt\nonumber \label{68}\\
 \leq &\,  C(T+1)[r^{2}\frac{1}{V(B_{o}(2r))}\int_{B_{o}(2r)}||\rho||^{2}dx+1]
\end{align}
for some constant $C$ independent of $T$ and $r$. From (\ref{106}) and
(\ref{68}),
\begin{equation}
\lim_{r\rightarrow \infty}\inf \int_{0}^{T}\int_{B_{o}(r)}||\eta||^{2}
(x,t)\exp(-ar^{2})dxdt<\infty.\label{107}
\end{equation}
From (\ref{84}), we have
\[
(\triangle_{b}-\frac{\partial}{\partial t})||\eta||\geq0.
\]
Therefore we know that $F=\eta-\int_{M}H(x,y,t)||\eta||(y,0)dy$ is also a
subsolution of the CR heat equation. Finally, it follow from (\ref{107}) and
the proof of \cite[Theorem 1.2.]{nt2} and \cite[Lemma 4.5.]{ccf} that%
\[
||\eta||\leq \int_{M}H(x,y,t)||\eta||(y,0)dy
\]
and then (\ref{53}) holds.
\end{proof}

\section{Structures of Complete Noncompact Sasakian Manifolds}

In this section, \ as a consequence of Corollary \ref{C1}, we are able to
investigate the structure of complete noncompact CR $(2n+1)$-manifolds of
nonnegative pseudohermitian bisectional curvature and vanishing torsion
(Sasakain) and complete \textbf{steady }CR Yamabe solitons as well.

Let $\{U_{\alpha}\}$ be an open covering of $M$ and $\pi_{\alpha}:U_{\alpha
}\rightarrow V_{\alpha}\subset \mathbf{C}^{n}$ submersion. Since $M$ is a
Sasakian manifold (\cite{fow}), we have a transverse Kahler structure
$(V_{\alpha},g_{\alpha}^{T})$ as following : There is a canonical isomorphism
\[
d\pi_{\alpha}:D_{p}\rightarrow T_{\pi_{\alpha}(p)}V_{\alpha}
\]
for $D=Ker\theta \subset TM$ and $p\in U_{\alpha}$. Let $\left \{  z^{1,}
z^{2},...,z^{n}\right \}  $ be the local holomorphic coordinates on $V_{\alpha
}$ and $T_{1,0}(M)=(D\otimes \mathbf{C)}^{1,0}$ span by the vectors of the form
\[
Z_{i}=\left(  \frac{\partial}{\partial z^{i}}-\theta \left(  \frac{\partial
}{\partial z^{i}}\right)  T\right)
\]
for $i=1,2,...,n$. It is clear that
\[
d\theta(Z_{i},Z_{\overline{j}})=d\theta(\frac{\partial}{\partial z^{i}}
,\frac{\partial}{\partial z\overline{^{j}}}).
\]
Then the restriction of $d\theta$ to the slice in $U_{\alpha}$ is the Kaehler
form which is the same as the Kaehler metric $g_{\alpha}^{T}$ on $V_{\alpha}.$
By this expression, we know that $z_{i}f=\frac{\partial f}{\partial z^{i}}$
locally; in other words, we could really view a Sasakian manifold as the
disjoint union of the slices on which there are transverse Kahler structures.
As an example, $\left \{  Z_{j}=\frac{\partial}{\partial z^{j}}+i\overline
{z}^{j}\frac{\partial}{\partial t};\ T=\frac{\partial}{\partial t}\right \}  $
is exactly a local frame and $\theta=dt+i%
%TCIMACRO{\dsum \limits_{j}}%
%BeginExpansion
{\displaystyle \sum \limits_{j}}
%EndExpansion
\left(  z^{j}d\overline{z}^{j}-\overline{z}^{j}dz^{j}\right)  $ is a
pseudohermitian structure on a $(2n+1)$-dimensional Heisenberg group $H^{n}$.

Note that in our situation as the Corollary \ref{C1} and Lemma \ref{71}, the
CR plurisuharmonic function
\[
\partial_{b}\overline{\partial}_{b}u\geq0
\]
is the same of the usual plurisuharmonic function with respect to the
transverse Kahler structures on $V_{\alpha}$
\[
\partial \overline{\partial}u\geq0,
\]
if $u_{0}=0$ in a Sasakian manifold.

In this section, we first obtain the following Proposition which served as the
CR analogue of results as in \cite[Theorem 3.1.]{nt1}. 
\begin{proposition}
\label{76} Let $(M,J,\theta)$ be a complete CR $(2n+1)$-manifold with
nonnegative pseudohermitian bisectional curvature and vanishing torsion. Let
$u(x)$ be a continous CR plurisubharmonic function on $M$ with $u_{0}=0$.
Assume that
\begin{align}
\label{77}|u|(x)\leq C\exp\big (ar^{2}(x)\big )
\end{align}
for some positive constants $a$ and $C$. Let $v(x,t)$ be the solution to the
heat equation on $M\times \lbrack0,\frac{C_{0}}{16a}]$ with initial value $u$,
obtained by Theorem \ref{34}. Then there exists $T_{0}>0$ depending only on
$a$ and there exists $T_{0}>T_{1}>0$ such that the following is true:

\noindent (i) For $0<t\leq T_{0}$, $v(.,t)$ is a smooth plurisubharmonic function.

\noindent (ii) Let
\[
K(x,t)\,=\, \big \{ \omega \in T_{x}^{1,0}M)|\, v_{\alpha \overline{\beta}}(x,t)\omega
^{\alpha}=0,\,\text{for all}\,\, \,  \beta\big \}
\]
be the null space of $v_{\alpha \overline{\beta}}$. Then for $0<t<T_{1}$,
$K(x,t)$ is a distribution on $M$.
\end{proposition}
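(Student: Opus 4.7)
The setup is Theorem \ref{34}: the growth bound (\ref{77}) produces a solution $v(x,t) = \int_M H(x,y,t)u(y)\,dy$ of the CR heat equation on $M\times[0,C_0/(16a)]$, smooth for $t>0$ by parabolic regularity of the subelliptic heat kernel. Since the torsion vanishes and $u_0=0$, the argument of Lemma \ref{71}(i) applies verbatim: $v_0(\cdot,t)$ solves the CR heat equation with zero initial datum, and uniqueness within the growth class of Theorem \ref{34} forces $v_0\equiv 0$ on the whole interval.

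For (i), set $\eta_{\alpha\bar\beta}(x,t) := v_{\alpha\bar\beta}(x,t)$. Using the CR commutation relations together with vanishing torsion and $v_0\equiv 0$, one derives the Lichnerowicz--Laplacian evolution equation
\[
\tfrac{\partial}{\partial t}\eta_{\alpha\bar\beta} = \Delta_b\eta_{\alpha\bar\beta} + 2R_{\alpha\bar\gamma\mu\bar\beta}\eta_{\gamma\bar\mu} - R_{\gamma\bar\beta}\eta_{\alpha\bar\gamma} - R_{\alpha\bar\gamma}\eta_{\gamma\bar\beta},
\]
which is precisely (\ref{84}) from Lemma \ref{72}. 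Nonnegative pseudohermitian bisectional curvature makes the reaction term preserve the cone of nonnegative Hermitian matrices (Bando--Mok style linear algebra). Combined with the integral maximum principle of the type used to establish (\ref{107}) in Lemma \ref{72}, exploiting (\ref{77}) and the $L^2$ estimates of Theorem \ref{49} to kill boundary terms at infinity, this yields $\eta(\cdot,t)\ge 0$ whenever the initial datum is pointwise nonnegative.

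The initial datum $u$ is only continuous CR plurisubharmonic, so $\eta(\cdot,0)$ is merely a nonnegative $(1,1)$-current. I would handle this by regularizing: use the semigroup identity $v(x,t)=\int H(x,y,t-s)v(y,s)\,dy$ for $0<s<t$; the transverse K\"ahler description of the Sasakian structure (valid because $u_0=0$) together with positivity of the heat kernel makes $v(\cdot,s)$ a smooth CR plurisubharmonic function, so $\eta(\cdot,s)\ge 0$ pointwise. Applying the tensor maximum principle on $[s,T_0]$ and letting $s\to 0^+$ yields (i) on $(0,T_0]$. For (ii), once $\eta\ge 0$ and the Lichnerowicz equation are in place, Hamilton's strong maximum principle for parabolic systems on the Hermitian endomorphism bundle of $T^{1,0}M$ produces some $0<T_1<T_0$ such that on $(0,T_1)$ the rank of $\eta(x,t)$ is locally constant in $x$ and $K(x,t)=\ker\eta(x,t)$ varies smoothly, hence forms a smooth distribution on $M$.

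The chief obstacle is executing these two tensor maximum principles on a complete noncompact pseudohermitian manifold, where no pointwise maximum principle is available a priori. Both steps must be run through an integral / $L^2$-type maximum principle along the lines of Lemma \ref{72}, requiring careful tail control on $\eta$, $\nabla_b v$, and $(\nabla^H)^2 v$; the growth bound (\ref{77}) and Theorem \ref{49} should suffice but need to be combined delicately, and the small constant $T_1$ in (ii) emerges precisely from the range on which these tail estimates remain uniform. A secondary subtlety is justifying that heat-kernel mollification of a continuous CR plurisubharmonic function with $u_0=0$ is again CR plurisubharmonic; this is where the transverse K\"ahler picture on the Sasakian leaves reduces the assertion to the classical K\"ahler fact.
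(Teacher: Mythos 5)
The paper does not actually supply a proof of Proposition \ref{76}: it is stated as ``the CR analogue of'' \cite[Theorem 3.1.]{nt1} and the argument is implicitly deferred to Ni--Tam, so there is no in-paper proof to compare against line by line. Your outline does follow the route one would expect from that reference: establish $v_{0}\equiv 0$ by the uniqueness argument of Lemma \ref{71}, derive the Lichnerowicz--Laplacian equation (\ref{84}) for $v_{\alpha\overline{\beta}}$, use nonnegative pseudohermitian bisectional curvature so that the reaction term preserves the cone of nonnegative Hermitian forms, run an integral ($L^{2}$-type) maximum principle adapted to the complete noncompact setting, and finish (ii) with a strong maximum principle giving local constancy of the rank and smoothness of the null space. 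That is the right skeleton, and your identification of the tail estimates as the main technical burden is accurate.

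The genuine gap is in your regularization step, which is circular. You propose to use the semigroup identity $v(x,t)=\int_{M} H(x,y,t-s)v(y,s)\,dy$ and assert that positivity of the heat kernel makes $v(\cdot,s)$ a smooth CR plurisubharmonic function; but the statement that heat-kernel smoothing of a merely continuous CR plurisubharmonic function is again plurisubharmonic is exactly the content of part (i) --- it is the conclusion you are trying to prove, not an available input. On a curved manifold, convolution with the heat kernel does not preserve plurisubharmonicity for free (unlike mollification by radial kernels in $\mathbf{C}^{n}$); the preservation is precisely what the Lichnerowicz equation together with the curvature hypothesis must deliver. The correct way to handle the low regularity of $u$, as in \cite{nt1}, is to interpret $\sqrt{-1}\partial_{b}\overline{\partial}_{b}u$ as a nonnegative $(1,1)$-current, show that $\sqrt{-1}\partial_{b}\overline{\partial}_{b}v$ solves (\ref{84}) with this current as initial value, and apply the tensor/integral maximum principle directly at that level (or approximate $u$ by smooth plurisubharmonic functions through an independent construction, not through the heat flow itself). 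Once that step is repaired, your treatment of (ii) via the strong maximum principle is the standard and correct conclusion.
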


In order to give an application of Theorem \ref{A2}, we also need the
following CR Liouvelle property.

\begin{lemma}
\label{T61} Let $(M,J,\theta)$ be a complete CR $(2n+1)$-manifold with
nonnegative pseudohermitian bisectional curvature and vanishing torsion. Let
$u(x)$ be a continuous CR Plurisubharmonic function with $u_{0}=0$ on $M$.
Assume that
\begin{equation}
u(x)=o(\log r). \label{61}
\end{equation}
Then $u(x)$ must be a constant.
\end{lemma}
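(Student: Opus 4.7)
The plan is to follow the Kähler strategy of Ni--Tam \cite{nt1} adapted to the Sasakian setting, with the heat regularization of $u$ playing the central role and with Proposition \ref{76} doing the smoothing.

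First I would apply Theorem \ref{34}: since $u(x)=o(\log r)$ in particular satisfies the $\exp(ar^{2}+b)$ growth on mean values, the Cauchy problem with initial value $u$ admits a solution $v(x,t)=\int_{M}H(x,y,t)u(y)\,dy$ on $M\times(0,T_{0}]$. Because the torsion vanishes and $u_{0}=0$, the commutation $[\triangle_{b},\mathbf{T}]=0$ forces $v_{0}(\cdot,t)\equiv 0$ (arguing as in the proof of Lemma \ref{71}(i)). Proposition \ref{76} then guarantees that $v(\cdot,t)$ is smooth and CR plurisubharmonic for $0<t<T_{1}$, and that the null distribution $K(x,t)$ of $v_{\alpha\overline{\beta}}$ is integrable. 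Using the Gaussian upper bound on $H$ together with the CR volume doubling property, the hypothesis $u(x)=o(\log r)$ is inherited by the regularization: $v(x,t)=o(\log r(x))$ uniformly for $t$ in compact subintervals of $(0,T_{1})$.

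Next I would fix $t\in(0,T_{1})$, set
\[
\phi(x)\,=\,\tfrac12\triangle_{b}v(x,t)\,=\,\sum_{\alpha}v_{\alpha\overline{\alpha}}(x,t)\,\geq\,0,
\]
and show $\phi\equiv 0$. Since $v_{0}=0$ and $\triangle_{b}v=2\phi$ with $\phi\geq 0$, Corollary \ref{c31} together with the representation formula
\[
v(x,t)-v(o,t)\,=\,2\int_{M}\bigl(G(o,y)-G(x,y)\bigr)\phi(y)\,dy
\]
applies, and the two--sided Green bounds \eqref{23} reduce the right hand side to (a multiple of) the expression in \eqref{39}, driven by the averages $k_{\phi}(x,t)$. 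If $\phi$ were not identically zero, the lower bound in \eqref{39} would grow at least like $\int_{0}^{2r}tk_{\phi}(t)\,dt$, contradicting $v=o(\log r)$. Hence $\phi\equiv 0$, i.e.\ $v(\cdot,t)$ is $\triangle_{b}$--harmonic.

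Finally, with $v(\cdot,t)$ pluriharmonic, $v_{0}=0$, and $v=o(\log r)$, the CR Bochner formula on a Sasakian manifold of nonnegative pseudohermitian Ricci curvature (as used in \eqref{97}) gives
\[
\tfrac12\triangle_{b}|\nabla_{b}v|^{2}\,\geq\,|(\nabla^{H})^{2}v|^{2}\,\geq\,0,
\]
so $|\nabla_{b}v|^{2}$ is $\triangle_{b}$--subharmonic. The mean value/Moser iteration on balls $B_{o}(R)$, combined with the sublogarithmic growth of $v$ (which forces the Dirichlet energy on $B_{o}(R)$ to grow slower than $V_{o}(R)$), yields $|\nabla_{b}v|(\cdot,t)\equiv 0$. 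Thus $v(\cdot,t)\equiv c(t)$; letting $t\to 0^{+}$ and using continuity of the heat semigroup at the (continuous) initial datum $u$ gives $u\equiv c(0)$, as required.

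The main obstacle is the middle step: upgrading the growth control $v=o(\log r)$ to the conclusion $\phi\equiv 0$. One must be careful that the Green's function bounds of Theorem \ref{91} are applied in the correct CR form (via the product trick of Lemma \ref{98}, since $M$ itself may be parabolic), and that the average $k_{\phi}(o,t)$ is extracted with the right constants so that the lower estimate in \eqref{39} actually overwhelms any $o(\log r)$ function. Everything else is a direct application of ingredients already developed in Sections \ref{} 2--3 of the paper.
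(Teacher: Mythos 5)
Your overall skeleton (regularize $u$ by the CR heat flow, invoke Proposition \ref{76} to get smooth CR plurisubharmonicity and the null distribution, finish with a gradient estimate and $t\to 0^{+}$) is the same as the paper's, but the crucial middle step is not. The paper does not try to prove $\triangle_{b}v=0$ from the growth condition; it truncates $u_{c}=\max\{u,c\}$, uses the splitting $D=D_{1}\times D_{2}$ furnished by Proposition \ref{76}(ii), and derives a contradiction from the vanishing of the Monge--Amp\`ere mass $(\partial_{b}\bar{\partial}_{b}v_{c})^{n}=0$ (the CR version of \cite[Lemma 3.3]{nt1}) against strict positivity of $(v_{c})_{\alpha\bar{\beta}}$ on the $D_{2}$ factor. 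This step uses the full matrix positivity $v_{\alpha\bar{\beta}}\geq 0$, not merely its trace.

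Your replacement of that step has a genuine gap. First, the representation formula $v(x,t)-v(o,t)=2\int_{M}(G(o,y)-G(x,y))\phi(y)\,dy$ is not available: solutions of $\triangle_{b}v=2\phi$ are only determined up to $\triangle_{b}$-harmonic functions, the Green's function need not exist on $M$ itself (the paper passes to $M\times\mathbb{R}^{4}$ precisely for this reason), and the convergence of that integral together with the estimate (\ref{39}) requires the a priori hypothesis $\int_{0}^{\infty}k_{\phi}(t)\,dt<\infty$, which is exactly what you do not know. Second, and more fundamentally, even granting the representation, the claimed contradiction does not follow: the term $\int_{0}^{2r}tk_{\phi}(t)\,dt$ need not tend to infinity when $\phi\not\equiv 0$. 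For instance, if $\phi\geq 0$ is compactly supported and nonzero on a manifold of Euclidean volume growth, then $\int_{0}^{2r}tk_{\phi}(t)\,dt$ converges to a finite constant, which is perfectly compatible with $v=o(\log r)$; on $\mathbb{R}^{m}$, $m\geq 3$, one even has bounded solutions of $\Delta v=\phi$ for such $\phi$. So trace positivity plus sublogarithmic growth alone cannot force $\phi\equiv 0$; the plurisubharmonic structure must enter through the determinant, as in the paper's argument. Until you supply a substitute for the $(\partial_{b}\bar{\partial}_{b}v)^{n}=0$ step (or an equivalent use of the full complex Hessian), the proof does not close. The final stage of your argument (Bochner formula, subharmonicity of $|\nabla_{b}v|^{2}$, gradient estimate, and $t\to 0^{+}$) is consistent with what the paper does and would be fine once the middle step is repaired.
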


\begin{proof}
We may assume that $M$ is simply connected. First we let $u_{c}=\max \{u,c\}$
with $v_{c}\geq0$. By the assumption (\ref{61}), we can conclude that $u_{c}$
satisfies (\ref{77}) and $u_{c}$ is CR plurisubharmonic. Therefore, we can
solve the CR heat equation with $u_{c}(x)$ as the initial data. Denote the
solution by $v_{c}$ on $M\times \lbrack0,T_{0}]$. Applying Theorem \ref{76} (i)
we obtain that $v_{c}$ is CR plurisubharmonic. By Proposition \ref{76} (ii),
for any $t_{0}>0$ small enough, $D=D_{1}\times D_{2}$ isometrically and
$(v_{c})_{\alpha \bar{\beta}}$ is zero when restricted on $D_{1}$ and
$(v_{c})_{\alpha \bar{\beta}}$ is positive everywhere when restricted on
$D_{2}$. By Corollary \ref{87}, we have
\begin{align}
\label{88}v_{c}(x,t)=o(\log r(x))
\end{align}
Hence when restricted on $D_{2}$, (\ref{88}) is still true. This contradicts
the fact $(\partial_{b}\bar{\partial_{b}}v_{c})^{n}=0$ (\cite[Lemma 3.3.]%
{nt1}) and $(v_{c})_{\alpha \bar{\beta}}$ is positive when restricted on
$D_{2}$ unless $D=D_{1}$. Hence
\[
(v_{c})_{\alpha \bar{\beta}}=0
\]
on $M$ for all $t_{0}>0$ small enough. By the gradient estimate in \cite{bbg},
\cite{ckt} and (\ref{88}), we can conclude that $(v_{c})(x,t_{0})$ is a
constant, provided $t_{0}>0$ is small enough. Hence $u_{c}$ is constant. Since
$c$ is arbitrary , it shows that $u(x)$ is constant.
\end{proof}

\textbf{The proof of Corollary \ref{C1} : }

\begin{proof}
By Theorem \ref{A2}, we can solve $\sqrt{-1}\partial_{b}\overline{\partial
}_{b}u=\rho$ where $u$ satisfies (\ref{40}). From (\ref{74}), we have
$u(x)=o(\log r)$. We obtain that $u(x)$ is constant by the Lemma \ref{T61}.
\ Then the proof is completed.
\end{proof}
\begin{remark}
Let $(L,h^L)$ be a holomorphic line bundle over a non-compact complete k\"{a}hler manifold $\Omega$, where $h^L$ denotes the Hermitian fiber metric of $L$. Assume that the curvature $R^L$ induced by $h^L$ is positive. Then $M=\{v\in L^*: |v|^2_{h^{L^*}}=1\}$
is a noncompact Sasakian manifold. This is an interesting example to understand the relation between CR Poincar\'{e}-Lelong
equation on $M$ and Poincar\'{e}-Lelong equation on $\Omega$. We will explore this example in a forthcoming paper.
\end{remark}
\section{Acknowledgement} The authors would like to thank Professor Xiangyu Zhou and the Editorial Board for the invitation to contribute this paper to a special issue of Acta Mathematica Sinica in memory of Professor Qikeng Lu.

\bigskip

\appendix

\section{ \ }

We introduce some basic materials in a pseudohermitian $(2n+1)$-manifold
( see \cite{l1}, \cite{l2} for more details ). Let $(M,\xi)$ be a
$(2n+1)$-dimensional, orientable, contact manifold with contact structure
$\xi$. A CR structure compatible with $\xi$ is an endomorphism $J:\xi
\rightarrow \xi$ such that $J^{2}=-1$. We also assume that $J$ satisfies the
following integrability condition: If $X$ and $Y$ are in $\xi$, then so are
$[JX,Y]+[X,JY]$ and $J([JX,Y]+[X,JY])=[JX,JY]-[X,Y]$.

Let $\left \{  T,Z_{\alpha},Z_{\bar{\alpha}}\right \}  $ be a frame of
$TM\otimes \mathbb{C}$, where $Z_{\alpha}$ is any local frame of $T_{1,0}%
,\ Z_{\bar{\alpha}}=\overline{Z_{\alpha}}\in T_{0,1}$ and $T$ is the
characteristic vector field. Then $\left \{  \theta,\theta^{\alpha}%
,\theta^{\bar{\alpha}}\right \}  $, which is the coframe dual to $\left \{
T,Z_{\alpha},Z_{\bar{\alpha}}\right \}  $, satisfies
\[
d\theta=ih_{\alpha \overline{\beta}}\theta^{\alpha}\wedge \theta^{\overline
{\beta}}
\]
for some positive definite hermitian matrix of functions $(h_{\alpha \bar
{\beta}})$, if we have this contact structure, we call such $M$ a strictly
pseudoconvex CR $(2n+1)$-manifold.

The Levi form $\left \langle \ ,\  \right \rangle _{L_{\theta}}$ is the Hermitian
form on $T_{1,0}$ defined by
\[
\left \langle Z,W\right \rangle _{L_{\theta}}=-i\left \langle d\theta
,Z\wedge \overline{W}\right \rangle .
\]
We can extend $\left \langle \ ,\  \right \rangle _{L_{\theta}}$ to $T_{0,1}$ by
defining $\left \langle \overline{Z},\overline{W}\right \rangle _{L_{\theta}%
}=\overline{\left \langle Z,W\right \rangle }_{L_{\theta}}$ for all $Z,W\in
T_{1,0}$. The Levi form induces naturally a Hermitian form on the dual bundle
of $T_{1,0}$, denoted by $\left \langle \ ,\  \right \rangle _{L_{\theta}^{\ast}%
}$, and hence on all the induced tensor bundles. Integrating the Hermitian
form (when acting on sections) over $M$ with respect to the volume form
$d\mu=\theta \wedge(d\theta)^{n}$, we get an inner product on the space of
sections of each tensor bundle.

The pseudohermitian connection of $(J,\theta)$ is the connection $\nabla$ on
$TM\otimes \mathbb{C}$ (and extended to tensors) given in terms of a local
frame $Z_{\alpha}\in T_{1,0}$ by
\[
\nabla Z_{\alpha}=\theta_{\alpha}{}^{\beta}\otimes Z_{\beta},\quad \nabla
Z_{\bar{\alpha}}=\theta_{\bar{\alpha}}{}^{\bar{\beta}}\otimes Z_{\bar{\beta}%
},\quad \nabla T=0,
\]
where $\theta_{\alpha}{}^{\beta}$ are the $1$-forms uniquely determined by the
following equations:
\[
\begin{split}
d\theta^{\beta}  &  =\theta^{\alpha}\wedge \theta_{\alpha}{}^{\beta}
+\theta \wedge \tau^{\beta},\\
0  &  =\tau_{\alpha}\wedge \theta^{\alpha},\\
0  &  =\theta_{\alpha}{}^{\beta}+\theta_{\bar{\beta}}{}^{\bar{\alpha}}.
\end{split}
\]
We can write (by Cartan lemma) $\tau_{\alpha}=A_{\alpha \gamma}\theta^{\gamma}$
with $A_{\alpha \gamma}=A_{\gamma \alpha}$. The curvature of Webster-Stanton
connection, expressed in terms of the coframe $\{ \theta=\theta^{0}
,\theta^{\alpha},\theta^{\bar{\alpha}}\}$, is
\[
\begin{split}
\Pi_{\beta}{}^{\alpha}  &  =\overline{\Pi_{\bar{\beta}}{}^{\bar{\alpha}}
}=d\omega_{\beta}{}^{\alpha}-\omega_{\beta}{}^{\gamma}\wedge \omega_{\gamma}
{}^{\alpha},\\
\Pi_{0}{}^{\alpha}  &  =\Pi_{\alpha}{}^{0}=\Pi_{0}{}^{\bar{\beta}}=\Pi
_{\bar{\beta}}{}^{0}=\Pi_{0}{}^{0}=0.
\end{split}
\]
Webster showed that $\Pi_{\beta}{}^{\alpha}$ can be written
\[
\Pi_{\beta}{}^{\alpha}=R_{\beta}{}^{\alpha}{}_{\rho \bar{\sigma}}\theta^{\rho
}\wedge \theta^{\bar{\sigma}}+W_{\beta}{}^{\alpha}{}_{\rho}\theta^{\rho}
\wedge \theta-W^{\alpha}{}_{\beta \bar{\rho}}\theta^{\bar{\rho}}\wedge
\theta+i\theta_{\beta}\wedge \tau^{\alpha}-i\tau_{\beta}\wedge \theta^{\alpha},
\]
where the coefficients satisfy
\[
R_{\beta \bar{\alpha}\rho \bar{\sigma}}=\overline{R_{\alpha \bar{\beta}\sigma
\bar{\rho}}}=R_{\bar{\alpha}\beta \bar{\sigma}\rho}=R_{\rho \bar{\alpha}
\beta \bar{\sigma}},\  \  \ W_{\beta \bar{\alpha}\gamma}=W_{\gamma \bar{\alpha
}\beta}.
\]
Here $R_{\gamma}{}^{\delta}{}_{\alpha \bar{\beta}}$ is the pseudohermitian
curvature tensor, $R_{\alpha \bar{\beta}}=R_{\gamma}{}^{\gamma}{}_{\alpha
\bar{\beta}}$ is the pseudohermitian Ricci curvature tensor, $S=R_{\alpha
\overline{\alpha}}$ is the Tanaka-Webster scalar curvature and $A_{\alpha
\beta}$ \ is the torsion tensor. Furthermore, we define the bisectional
curvature
\[
R_{\alpha \bar{\alpha}\beta \overline{\beta}}(X,Y)=R_{\alpha \bar{\alpha}%
\beta \overline{\beta}}X_{\alpha}X_{\overline{\alpha}}Y_{\beta}Y_{\bar{\beta}}%
\]
and the bitorsion tensor
\[
T_{\alpha \overline{\beta}}(X,Y):=i(A_{\bar{\beta}\bar{\rho}}X_{\rho}Y_{\alpha
}-A_{\alpha \rho}X_{\bar{\rho}}Y_{\bar{\beta}})
\]
and the torsion tensor \
\[
Tor(X,Y):=h^{\alpha \bar{\beta}}T_{\alpha \overline{\beta}}(X,Y)=i(A_{\overline
{\alpha}\bar{\rho}}X_{\rho}Y_{\alpha}-A_{\alpha \rho}X_{\bar{\rho}}
Y_{\overline{\alpha}})
\]
for any $X=X_{\overline{\alpha}}Z_{\alpha},\ Y=Y_{\overline{\alpha}}Z_{\alpha
}$ in $T_{1,0}.$

We will denote components of covariant derivatives with indices preceded by
comma; thus write $A_{\alpha \beta,\gamma}$. The indices $\{0,\alpha
,\bar{\alpha}\}$ indicate derivatives with respect to $\{T,Z_{\alpha}
,Z_{\bar{\alpha}}\}$. For derivatives of a scalar function, we will often omit
the comma, for instance%
\[
u_{\alpha}=Z_{\alpha}u;\ u_{\alpha \bar{\beta}}=Z_{\bar{\beta}}Z_{\alpha
}u-\omega_{\alpha}{}^{\gamma}(Z_{\bar{\beta}})Z_{\gamma}u.
\]
For a smooth real-valued function $u$, the subgradient $\nabla_{b}$ is defined
by $\nabla_{b}u\in \xi$ and $\left \langle Z,\nabla_{b}u\right \rangle
_{L_{\theta}}=du(Z)$ for all vector fields $Z$ tangent to contact plane.
Locally
\[
\nabla_{b}u=\sum_{\alpha}u_{\bar{\alpha}}Z_{\alpha}+u_{\alpha}Z_{\bar{\alpha}}%
\]
and
\[
u_{0}=Tu.
\]
We can use the connection to define the subhessian as the complex linear map
\[
(\nabla^{H})^{2}u:T_{1,0}\oplus T_{0,1}\rightarrow T_{1,0}\oplus T_{0,1}%
\]
by
\[
(\nabla^{H})^{2}u(Z)=\nabla_{Z}\nabla_{b}u.\
\]
In particular,
\[
|\nabla_{b}u|^{2}=2u_{\alpha}u_{\overline{\alpha}},\quad|\nabla_{b}^{2}%
u|^{2}=2(u_{\alpha \beta}u_{\overline{\alpha}\overline{\beta}}+u_{\alpha
\overline{\beta}}u_{\overline{\alpha}\beta}).
\]
Also
\begin{eqnarray}\label{7.1}
\begin{array}
[c]{c}
\Delta_{b}u=Tr\left(  (\nabla^{H})^{2}u\right)  =\sum_{\alpha}(u_{\alpha
\bar{\alpha}}+u_{\bar{\alpha}\alpha}).
\end{array}
\end{eqnarray}

The Kohn-Rossi Laplacian $\square_{b}$ on functions is defined by
\[
\square_{b}\varphi=2\overline{\partial}_{b}^{\ast}\overline{\partial}
_{b}\varphi=(\Delta_{b}+inT)\varphi=-2\varphi_{\overline{\alpha}}{}
^{\overline{\alpha}}
\]
and on $(p,q)$-forms is defined by
\[
\square_{b}=2(\overline{\partial}_{b}^{\ast}\overline{\partial}_{b}
+\overline{\partial}_{b}\overline{\partial}_{b}^{\ast}).
\]

Next we recall the following commutation relations (\cite{l1}). \ Let
$\varphi$ be a scalar function and $\sigma=\sigma_{\alpha}\theta^{\alpha}$ be
a $\left(  1,0\right)  $ form, then we have
\begin{equation}
\begin{array}
[c]{ccl}
\varphi_{\alpha \beta} & = & \varphi_{\beta \alpha},\\
\varphi_{\alpha \bar{\beta}}-\varphi_{\bar{\beta}\alpha} & = & ih_{\alpha
\overline{\beta}}\varphi_{0},\\
\varphi_{0\alpha}-\varphi_{\alpha0} & = & A_{\alpha \beta}\varphi_{\bar{\beta}%
},\\
\sigma_{\alpha,0\beta}-\sigma_{\alpha,\beta0} & = & \sigma_{\alpha,\bar
{\gamma}}A_{\gamma \beta}-\sigma_{\gamma}A_{\alpha \beta,\bar{\gamma}},\\
\sigma_{\alpha,0\bar{\beta}}-\sigma_{\alpha,\bar{\beta}0} & = & \sigma
_{\alpha,\gamma}A_{\bar{\gamma}\bar{\beta}}+\sigma_{\gamma}A_{\bar{\gamma}%
\bar{\beta},\alpha},
\end{array}
\label{A}
\end{equation}
and
\[
\begin{array}
[c]{ccl}
\sigma_{\alpha,\beta \gamma}-\sigma_{\alpha,\gamma \beta} & = & iA_{\alpha
\gamma}\sigma_{\beta}-iA_{\alpha \beta}\sigma_{\gamma},\\
\sigma_{\alpha,\bar{\beta}\bar{\gamma}}-\sigma_{\alpha,\bar{\gamma}\bar{\beta
}} & = & ih_{\alpha \overline{\beta}}A_{\bar{\gamma}\bar{\rho}}\sigma_{\rho
}-ih_{\alpha \overline{\gamma}}A_{\bar{\beta}\bar{\rho}}\sigma_{\rho},\\
\sigma_{\alpha,\beta \bar{\gamma}}-\sigma_{\alpha,\bar{\gamma}\beta} & = &
ih_{\beta \overline{\gamma}}\sigma_{\alpha,0}+R_{\alpha \bar{\rho}}{}_{\beta
\bar{\gamma}}\sigma_{\rho}.
\end{array}
\]

Moreover for multi-index $I=\left(  \alpha_{1},...,\alpha_{p}\right)
,\  \bar{J}=\left(  \bar{\beta}_{1},...,\bar{\beta}_{q}\right)  ,$ we denote
$I(\alpha_{k}=\mu)=\left(  \alpha_{1},...,\alpha_{k-1},\mu,\alpha
_{k+1},...,\alpha_{p}\right)  .$ Then%
\[%
\begin{array}
[c]{ccl}%
\eta_{I\bar{J},\mu \lambda}-\eta_{I\bar{J},\lambda \mu} & = & i\sum_{k=1}%
^{p}\left(  \eta_{I(\alpha_{k}=\mu)\bar{J}}A_{\alpha_{k}\lambda}%
-\eta_{I(\alpha_{k}=\lambda)\bar{J}}A_{\alpha_{k}\mu}\right) \\
&  & -i\sum_{k=1}^{q}\left(  \eta_{I\bar{J}\left(  \bar{\beta}_{k}=\bar
{\gamma}\right)  }h_{\bar{\beta}_{k}\mu}A_{\lambda}^{\bar{\gamma}}-\eta
_{I\bar{J}\left(  \bar{\beta}_{k}=\bar{\gamma}\right)  }h_{\bar{\beta}%
_{k}\lambda}A_{\mu}^{\bar{\gamma}}\right)  ,
\end{array}
\]
and
\[
\begin{array}
[c]{ccl}
\eta_{I\bar{J},\lambda \bar{\mu}}-\eta_{I\bar{J},\bar{\mu}\lambda} & = &
ih_{\lambda \bar{\mu}}\eta_{I\bar{J},0}+\sum_{k=1}^{p}\eta_{I\left(  \alpha
_{k}=\gamma \right)  \bar{J}}R_{\alpha_{k}\mathit{\  \ }\lambda \bar{\mu}%
}^{\mathit{\  \  \ }\gamma}+\sum_{k=1}^{q}\eta_{I\bar{J}\left(  \bar{\beta}%
_{k}=\bar{\gamma}\right)  }R_{\bar{\beta}_{k}\mathit{\  \  \ }\lambda \bar{\mu}%
}^{\mathit{\  \  \  \ }\bar{\gamma}}\\
\eta_{I\bar{J},0\mu}-\eta_{I\bar{J},\mu0} & = & A_{\mu}^{\bar{\rho}}%
\eta_{I\bar{J},\bar{\rho}}-\sum_{k=1}^{p}A_{\alpha_{k}\mu,\bar{\rho}}%
\eta_{I\left(  \alpha_{k}=\rho \right)  \bar{J}}+\sum_{k=1}^{q}A_{\mu \rho
,\bar{\beta}_{k}}\eta_{I\bar{J}\left(  \bar{\beta}_{k}=\bar{\rho}\right)  }.
\end{array}
\]

\bigskip

\end{document}